\numberwithin{figure}{section}
\numberwithin{table}{section}
\newtheorem{theorem}{Theorem}[section]
\newtheorem*{thrm}{Main Theorem}
\newtheorem{lemma}[theorem]{Lemma}
\newtheorem{prop}[theorem]{Proposition}
\theoremstyle{definition}
\newtheorem{definition}[theorem]{Definition}
\newtheorem{example}[theorem]{Example}
\newtheorem{cor}[theorem]{Corollary}
\theoremstyle{remark}
\newtheorem{remark}[theorem]{Remark}
\numberwithin{equation}{section}
\def \Q{{\mathbb Q}}
\def \A{{\mathfrak A}}
\def \[{[ }
\def \]{] }
\def \l{\langle }
\def \r{\rangle }
\def \B{{\mathcal B}}
\begin{document}

\author{Anna Felikson}
\address{Independent University of Moscow, B. Vlassievskii 11, 119002 Moscow, Russia}
\curraddr{Max-Planck Institute for Mathematics, Vivatsgasse 7, D-53111, Germany}
\email{felikson@mccme.ru}
\thanks{Research supported by grants INTAS YSF-06-10000014-5916 (A.~F.), RFBR 07-01-00390-a (A.~F. and P.~T.), PHY 0555346 and DNS 0800671 (M.~S.), and INTAS YSF-06-10000014-5766 (P.~T.)}

\author{Michael Shapiro}
\address{Department of Mathematics, Michigan State University, East Lansing, MI 48824, USA}
\email{mshapiro@math.msu.edu}

\author{Pavel Tumarkin}
\address{School of Engineering and Science, Jacobs University Bremen, Campus Ring 1, D-28759, Germany} 
\email{p.tumarkin@jacobs-university.de}

\title{Skew-symmetric cluster algebras of finite mutation type}


\begin{abstract}
In the famous paper~\cite{FZ2} Fomin and Zelevinsky obtained Cartan-Killing type classification of
all cluster algebras of finite type, i.e. cluster algebras having only finitely many distinct cluster variables.
A wider class of cluster algebras is formed by cluster algebras of finite mutation type which have finitely many
exchange matrices (but are allowed to have infinitely many cluster variables).
In this paper we classify all cluster algebras of finite mutation type with skew-symmetric exchange matrices.
Besides cluster algebras of rank $2$ and cluster algebras associated with triangulations of surfaces there
are exactly $11$ exceptional skew-symmetric cluster algebras of finite mutation type.
More precisely, $9$ of them are associated with root systems
$E_6$, $E_7$, $E_8$, $\widetilde E_6$, $\widetilde E_7$, $\widetilde E_8$, $E_6^{(1,1)}$, $E_7^{(1,1)}$, $E_8^{(1,1)}$;
two remaining were found by Derksen and Owen in~\cite{DO}.
We also describe a criterion which determines if a skew-symmetric cluster algebra is of finite mutation type,
and discuss growth rate of cluster algebras.
\end{abstract}

\maketitle

\tableofcontents

\section{Introduction}
Cluster algebras were introduced by Fomin and Zelevinsky in the sequel of papers~\cite{FZ1},~\cite{FZ2},~\cite{BFZ3},
~\cite{FZ4}.

We think of cluster algebra as a subalgebra of $\Q(x_1,\dots,x_n)$ determined by generators
(``cluster coordinates''). These generators are collected into $n$-element
groups called {\it clusters} connected by local transition rules which
are determined by an $n\times n$ skew-symmetrizable { exchange matrix} associated with each cluster. For precise definitions see Section~\ref{cluster}.

In~\cite{FZ2}, Fomin and Zelevinsky discovered a deep connection between cluster algebras of {\it finite type} (i.e., cluster algebras containing finitely many clusters)   and Cartan-Killing classification of simple Lie algebras. More precisely, they proved that there is a bijection between Cartan matrices of finite type and cluster algebras of finite type. The corresponding Cartan matrices can be obtained by some symmetrization procedure of exchange matrices.

 Exchange matrices undergo {\it mutations} which are explicitly described locally.
 Collection of all exchange matrices of a cluster algebra form a {\it mutation class} of exchange matrices.
 In particular, mutation class of a cluster algebra of finite type is finite.
 In this paper, we are interested in a larger class of cluster algebras,
 namely, cluster algebras whose exchange matrices form finite mutation class.
 We will assume that exchange matrices are skew-symmetric.

Besides cluster algebras of finite type, there exist other series of
algebras belonging to the class in consideration. One series of
examples is provided by cluster algebras corresponding to Cartan
matrices of affine Kac-Moody algebras with simply-laced Dynkin
diagrams. It was shown in~\cite{BR} that these examples exhaust all
cases of acyclic skew-symmetric cluster algebras of finite mutation
type. Furthermore, Seven in~\cite{S2} has shown that acyclic
skew-symmetrizable cluster algebras of finite mutation type
correspond to affine Kac-Moody algebras.
%

One more large class of infinite type cluster algebras of finite mutation type was studied in the paper~\cite{FST}, where, in particular, was shown that signed adjacency matrices of arcs of a triangulation of a bordered two-dimensional surface have finite mutation class.


In the same paper, Fomin, Shapiro and Thurston discussed the conjecture~\cite[Problem~12.10]{FST} that besides adjacency matrices of triangulations of bordered two-dimensional surfaces and matrices mutation-equivalent to one of the following nine types: $E_6$, $E_7$, $E_8$, $\widetilde E_6$, $\widetilde E_7$, $\widetilde E_8$,  $E_6^{(1,1)}$,  $E_7^{(1,1)}$,  $E_8^{(1,1)}$ (see~\cite[Section~12]{FST}), there exist finitely many skew-symmetric matrices of size at least $3\times 3$ with finite mutation class. Notice that the first three types in the list correspond to cluster algebras of finite type.

In the preprint~\cite{DO} Derksen and Owen found two more skew-symmetric matrices (denoted by $X_6$ and $X_7$) with finite mutation class that are not included in the previous conjecture. The authors also ask if their list of $11$ mutation classes contains all the finite mutation classes of skew-symmetric matrices of size at least $3\times 3$ not corresponding to triangulations.

\emph{The main goal} of this paper is to prove the conjecture by Fomin, Shapiro and Thurston by showing the completeness of the Derksen-Owen list, i.e. to prove the following theorem:

\begin{thrm}[Theorem~\ref{all}]
Any skew-symmetric $n\times n$ matrix, $n\ge 3$, with finite mutation class is either an adjacency matrix of triangulation of a bordered two-dimensional surface or a matrix mutation-equivalent to a matrix of one of the following eleven types: $E_6$, $E_7$, $E_8$, $\widetilde E_6$, $\widetilde E_7$, $\widetilde E_8$,  $E_6^{(1,1)}$,  $E_7^{(1,1)}$, $E_8^{(1,1)}$, $X_6$, $X_7$.

\end{thrm}

\begin{remark} 
The same approach that we used for skew-symmetric matrices is applicable (after small changes) for the more general case of skew-symmetrizable matrices. The complete list of skew-symmetrizable matrices with finite mutation class will be published elsewhere.
\end{remark}

We also show a way to classify all minimal skew-symmetric $n\times n$ matrices with infinite mutation class. In particular, we prove that $n\le 10$. This gives rise to the following criterion for a large skew-symmetric matrix to have finite mutation class:

\setcounter{theorem}{3}
\setcounter{section}{7}
\begin{theorem}
A skew-symmetric $n\times n$ matrix $B$, $n\ge 10$, has finite mutation class if and only if a mutation class of every principal $10\times 10$ submatrix of $B$ is finite.

\end{theorem}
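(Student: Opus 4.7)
The plan is to prove the biconditional by treating each direction separately. The forward direction will be an easy general fact about passing to principal submatrices, while the reverse direction will rest on the auxiliary classification (mentioned in the paragraph above the theorem) of all minimal skew-symmetric matrices with infinite mutation class.

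First, for the ``only if'' direction, I would record the elementary fact that finite mutation class is inherited by principal submatrices. The key observation is that for any subset $I\subset\{1,\dots,n\}$ and any $k\in I$, the principal submatrix of $\mu_k(B)$ indexed by $I$ is the same as $\mu_k$ applied to the principal submatrix $B_I$, since the mutation rule at index $k$ for entries with $i,j\ne k$ uses only $B_{ik}$ and $B_{kj}$. Iterating, every matrix in the mutation class of $B_I$ arises as the $I$-restriction of some matrix in the mutation class of $B$, so finiteness of the latter forces finiteness of the former. In particular, every $10\times 10$ principal submatrix of $B$ has finite mutation class.

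For the harder ``if'' direction I would argue by contrapositive. Suppose $B$ has infinite mutation class. Among all principal submatrices of $B$ with infinite mutation class, pick one of minimum size and call it $B_0$; by construction every proper principal submatrix of $B_0$ has finite mutation class. Invoking the classification result stated just above the theorem (the bound $n\le 10$ on the size of any minimal skew-symmetric matrix with infinite mutation class), the size of $B_0$ is at most $10$. Since $n\ge 10$, I may extend the index set of $B_0$ to some $10$-element set $J\subset\{1,\dots,n\}$, and the corresponding principal submatrix $B_J$ of $B$ then contains $B_0$ as a principal submatrix. Applying the easy direction to $B_J$ and its principal submatrix $B_0$, the infiniteness of the mutation class of $B_0$ propagates up to $B_J$, producing the required $10\times 10$ principal submatrix of $B$ with infinite mutation class and contradicting the hypothesis.

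The main obstacle is not in either implication above — both arguments are essentially formal once the right input is in hand — but in the auxiliary classification of minimal infinite-mutation-class matrices and the accompanying upper bound $n\le 10$ on their size. That is the substantive combinatorial content of the paper and is what makes the criterion both sharp and nontrivial; once the bound $n\le 10$ is available, the proof of the criterion proceeds as sketched.
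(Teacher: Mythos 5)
Your proof is correct and follows essentially the same route as the paper: the forward direction is the standard fact that mutation commutes with restriction to principal submatrices (so subquivers of mutation-finite quivers are mutation-finite), and the reverse direction reduces, via a minimal counterexample, to the paper's Lemma on the bound $n\le 10$ for minimal mutation-infinite quivers, exactly as the paper deduces the theorem as a corollary of that lemma. No gaps.
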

\setcounter{theorem}{0}
\setcounter{section}{1}

As an application of the classification of skew-symmetric matrices of finite mutation type
 we characterize skew-symmetric cluster algebras of polynomial growth, i.e. cluster algebras for which the number of distinct clusters obtained from the initial one by $n$ mutations grows polynomially in $n$.

\medskip

The paper is organized as follows. In Section~\ref{cluster}, we provide necessary background in cluster algebras, and reformulate the classification problem of skew-symmetric matrices in terms of {\it quivers} by assigning to every exchange matrix an oriented weighted graph.

In Section~\ref{main}, we present the sketch of the proof of the Main Theorem. We list all the key steps, and discuss the main combinatorial and computational ideas we use. Sections~\ref{blockdecomp}--\ref{class} contain the detailed proofs.

Section~\ref{blockdecomp} is devoted to the technique of block-decomposable quivers. We recall the basic facts from~\cite{FST} and prove several properties we will heavily use in the sequel. Section~\ref{minimal} contains the proof of the key theorem classifying minimal non-decomposable quivers. Section~\ref{class} completes the proof of the Main Theorem.

In Section~\ref{inf} we provide a criterion for a skew-symmetric matrix to have finite mutation class. Section~\ref{growth} is devoted to growth rates of cluster algebras.

Finally, in Section~\ref{q3} we use the results of the previous section to complete the description of mutation classes of quivers of order $3$.

\medskip

We would like to thank M.~Barot, V.~Fock, S.~Fomin,
C.~Geiss, A.~Goncharov, B.~Keller, A.~Seven, and A.~Zelevinsky for
their interest in the problem and many fruitful discussions, and H.Thomas for explaining us that linear growth of affine
cluster algebras is a corollary of categorification theory. The first and the third authors are grateful to the
University of Fribourg for a great atmosphere during their visit,
and for a partial support by SNF projects 200020-113199 and
200020-121506/1. The second author thanks EPFL, Stockholm
University, and Royal Institute of Technology, the third author
thanks Michigan State University for support during the work on this
paper.

\section{Cluster algebras, mutations, and quivers}
\label{cluster}

\noindent
We briefly remind the definition of coefficient-free cluster algebra.

An integer $n\times n$ matrix $B$ is called \emph{skew-symmetrizable} if there exists an
integer diagonal $n\times n$ matrix $D=diag(d_1,\dots,d_n)$,
such that the product $DB$ is a skew-symmetric matrix, i.e.,
                                       $d_i b_{i,j}=-b_{j,i}d_j$.

\emph{A seed} is a pair $(f,B)$, where $f=\{f_1,\dots,f_n\}$ form a collection of algebraically independent rational functions of $n$ variables
$x_1,\dots,x_n$, and $B$ is a skew-symmetrizable matrix.

The part $f$ of seed $(f,B)$ is called \emph{cluster}, elements $f_i$ are called \emph{cluster variables},
and $B$ is called \emph{exchange matrix}.

\begin{definition}
For any $k$, $1\le k\le n$ we define \emph{the mutation} of seed $(f,B)$ in direction $k$
as a new seed $(f',B')$ in the following way:
\begin{equation}\label{eq:MatrixMutation}
B'_{i,j}=\left\{
           \begin{array}{ll}
             -B_{ij}, & \hbox{ if } i=k \hbox{ or } j=k; \\
             B_{ij}+\frac{|B_{ik}|B_{kj}+B_{ik}|B_{kj}|}{2}, & \hbox{ otherwise.}
           \end{array}
         \right.
\end{equation}

\begin{equation}\label{eq:ClusterMutation}
f'_i=\left\{
           \begin{array}{ll}
             f_i, & \hbox{ if } i\ne k; \\
             \frac{\prod_{B_{ij}>0} f_j^{B_{ij}}+\prod_{B_{ij}<0} f_j^{-B_{ij}}}{f_i}, & \hbox{ otherwise.}
           \end{array}
         \right.
\end{equation}
\end{definition}

\noindent
We write $(f',B')=\mu_k\left((f,B)\right)$.
Notice that $\mu_k(\mu_k((f,B)))=(f,B)$.
We say that two seeds are \emph{mutation-equivalent}
if one is obtained from the other by a sequence of seed mutations.
Similarly we say that two clusters or two exchange matrices are \emph{mutation-equivalent}.

Notice that exchange matrix mutation~\ref{eq:MatrixMutation} depends only on the exchange matrix itself.
The collection of all matrices mutation-equivalent to a given matrix $B$ is called the \emph{mutation class} of $B$.

For any skew-symmetrizable matrix $B$ we define \emph{initial seed} $(x,\!B)$ as
$(\{x_1,\dots,x_n\},\!B)$, $B$ is the \emph{initial exchange matrix}, $x=\{x_1,\dots,x_n\}$ is the \emph{initial cluster}.

{\it Cluster algebra} $\A(B)$ associated with the skew-sym\-met\-ri\-zab\-le $n\times n$ matrix $B$ is a subalgebra of $\Q(x_1,\dots,x_n)$ generated by all cluster variables of the clusters mutation-equivalent
to the initial cluster $(x,B)$.

Cluster algebra $\A(B)$ is called \emph{of finite type} if it contains only finitely many
cluster variables. In other words, all clusters mutation-equivalent to initial cluster contain
totally only finitely many distinct cluster variables.

In~\cite{FZ2}, Fomin and Zelevinsky proved a remarkable theorem that cluster algebras of finite type
can be completely classified. More excitingly, this classification is parallel to the famous Cartan-Killing classification
of simple Lie algebras.


Let $B$ be an integer $n\times n$ matrix. Its \emph{Cartan companion} $C(B)$ is the integer $n\times n$ matrix defined as follows:

\begin{equation*}
  C(B)_{ij}=\left\{
              \begin{array}{ll}
                2, & \hbox{ if } i=j; \\
                 -|B_{ij}|, & \hbox{ otherwise.}
              \end{array}
            \right.
\end{equation*}

\begin{theorem}[\cite{FZ2}]
\label{thm:FinTypeClass}
There is a canonical bijection between the Cartan matrices of
finite type and cluster algebras of finite
type. Under this bijection, a Cartan matrix $A$ of finite type corresponds to the cluster algebra
$\A(B)$, where $B$ is an arbitrary skew-symmetrizable matrix with $C(B) = A$.
\end{theorem}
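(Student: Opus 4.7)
The plan follows the Fomin--Zelevinsky strategy: prove the forward direction (Cartan of finite type $\Rightarrow$ cluster algebra of finite type) by explicitly constructing the set of cluster variables, and prove the converse (finite type $\Rightarrow$ some $C(B')$ in the mutation class is of finite Cartan--Killing type) by a local finiteness criterion on $2\times 2$ and $3\times 3$ principal submatrices.

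For the forward direction, fix a Cartan matrix $A$ of finite type and any skew-symmetrizable $B$ with $C(B)=A$. I would realize cluster variables combinatorially via the set $\Phi_{\geq -1}$ of \emph{almost positive roots} of the root system of $A$, that is, positive roots together with negative simple roots. The bridge between cluster variables and $\Phi_{\geq -1}$ is the \emph{denominator vector}: by the Laurent phenomenon of~\cite{FZ1}, every cluster variable obtained from the initial seed $(x,B)$ can be written as $P(x_1,\dots,x_n)/\prod x_i^{d_i}$ with $P$ a polynomial not divisible by any $x_i$, and I would prove by induction on mutation distance that the integer vector $(d_1,\dots,d_n)$ is the simple-root expansion of a unique element of $\Phi_{\geq -1}$. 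Since $|\Phi_{\geq -1}|$ is finite in finite type, $\mathcal{A}(B)$ contains only finitely many cluster variables.

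To upgrade this to a bijection between clusters and maximal ``compatible'' subsets, I would introduce the compatibility degree $(\alpha\|\beta)$ on $\Phi_{\geq -1}$, invariant under an appropriate Coxeter-type action, and verify that it is symmetric in $\alpha,\beta$. The \emph{cluster complex} whose simplices are the pairwise compatible subsets is then pure of dimension $n-1$, and I would identify its facets with clusters and its flip graph with the exchange graph of seed mutations. This produces the combinatorial structure of the generalized associahedron and, simultaneously, proves that the resulting cluster algebra depends only on $A$ and not on the particular $B$: any two skew-symmetrizable matrices with the same Cartan companion of finite type lie in a common mutation class (the simply-laced case is handled by passing between acyclic orientations of the Dynkin diagram, and the general case reduces to it by folding).

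For the converse direction I would use a local finiteness criterion. Restricting seed mutations to a subset $I$ of indices produces a cluster subalgebra, so if $\mathcal{A}(B)$ is of finite type then every principal submatrix of every $B'$ in the mutation class must also yield a cluster algebra of finite type. Analysis of rank $2$ forces $|b_{ij}b_{ji}|\in\{0,1,2,3\}$ for every off-diagonal $2\times 2$ block, since the rank-$2$ exchange relation $x_i x_i' = x_j^{|b_{ij}|}+1$ produces infinitely many distinct cluster variables once $|b_{ij}b_{ji}|\geq 4$. A finer rank-$3$ analysis eliminates the remaining skew-symmetrizable $3\times 3$ matrices whose Cartan companion is not of finite type, and after these restrictions a matrix $B'$ in the mutation class with $C(B')$ of finite type can be found, yielding the bijection.

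The main obstacle is the combinatorial core of the forward direction: proving that the compatibility degree is well-defined and symmetric, and that the induced cluster complex has precisely the face structure dictated by seed mutations. This is the content of the generalized associahedron construction and requires either polytopal realizations or a delicate induction on the rank using the Coxeter action on $\Phi_{\geq -1}$. By comparison, once the rank-$2$ and rank-$3$ finiteness criteria are in hand the converse direction is routine, and canonicity of the bijection is a bookkeeping consequence of mutation-equivalence of orientations.
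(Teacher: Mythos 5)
This theorem is not proved in the paper: it is Fomin and Zelevinsky's finite type classification, quoted from \cite{FZ2} and used only as background for the finite \emph{mutation} type problem that the paper actually addresses. There is therefore no in-paper proof to compare your attempt against; I can only assess it as a reconstruction of the argument in \cite{FZ2}. As such it is a faithful outline of the right strategy: the parametrization of cluster variables by almost positive roots via denominator vectors, the cluster complex built from a compatibility relation on the set of almost positive roots, and a local rank-$2$ finiteness obstruction for the converse are indeed the actual ingredients. Two corrections are in order. First, the compatibility degree $(\alpha\|\beta)$ is \emph{not} symmetric outside the simply-laced case; only its vanishing is symmetric, i.e. $(\alpha\|\beta)=0$ if and only if $(\beta\|\alpha)=0$, and it is this weaker statement that makes the cluster complex well defined. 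Second, you understate the converse: the rank-$2$ computation only shows that a finite type cluster algebra is $2$-finite (every $B'$ in the mutation class satisfies $|b'_{ij}b'_{ji}|\le 3$); the substantial remaining step is to show that a $2$-finite mutation class necessarily contains a matrix whose Cartan companion is of finite type, which in \cite{FZ2} (and in the later effective criteria of \cite{S} and \cite{BGZ}) requires a genuine classification argument rather than routine bookkeeping.
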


The results by Fomin and Zelevinsky were further developed in~\cite{S} and~\cite{BGZ}, where the effective criteria for cluster algebras of finite type were given.

A cluster algebra of finite type has only finitely many distinct seeds.
Therefore, any cluster algebra that has only finitely many cluster variables contains only finitely many
distinct exchange matrices. Quite the contrary, the cluster algebra with finitely many exchange matrices
is not necessarily of finite type.

\begin{definition}\label{def:FinMutType} A cluster algebra with only finitely many exchange matrices is called \emph{of finite mutation type}.
\end{definition}

\begin{example} One example of infinite cluster algebra of finite mutation type is the Markov cluster algebra whose
exchange matrix is
\begin{center}
$\begin{pmatrix}
   0 & 2 & -2 \\
   -2 & 0 & 2 \\
   2 & -2 & 0 \\
 \end{pmatrix}$
 \end{center}
 It was described in details in~\cite{FZ1}. Markov cluster algebra is not of finite type, moreover, it is even not finitely generated.
 Notice, however, that mutation in any direction leads simply to sign change of exchange matrix.
 Therefore, the Markov cluster algebra is clearly of finite mutation type.
\end{example}

\begin{remark} 
Since the orbit of an exchange matrix depends on the exchange matrix only, we may speak about skew-symmetrizable matrices of finite mutation type.
\end{remark}

Therefore, the Main Theorem describes  \emph{all skew-symmetric integer matrices whose mutation class is finite}.



For our purposes it is convenient to encode an $n\times n$ skew-symmetric integer matrix $B$ by a finite oriented multigraph without loops and $2-$cycles called {\it quiver}.
More precisely, a {\it quiver} $S$ is a finite $1$-dimen\-sional simplicial complex with oriented weighted edges, where weights are positive integers.

Vertices of $S$ are labeled by $[1,\dots,n]$. If $B_{i,j}>0$, we join vertices $i$ and $j$ by an  edge directed from $i$ to $j$ and assign to this edge weight $B_{i,j}$. Vice versa, any quiver with integer positive weights corresponds to a skew-symmetric integer matrix. While drawing quivers, usually we draw edges of weight $B_{i,j}$ as edges of multiplicity $B_{i,j}$, but sometimes, when it is more convenient, we put the weight on simple edge.

Mutations of exchange matrices induce {\it mutations of quivers}. If $S$ is the quiver corresponding to matrix $B$, and $B'$ is a mutation of $B$ in direction $k$, then we call the quiver $S'$ associated to $B'$ a {\it mutation of $S$ in direction $k$}. It is easy to see that mutation in direction $k$ changes weights of quiver in the way described in the following picture (see e.g.~\cite{K2}):

\begin{figure}[!h]
\begin{center}
\psfrag{a}{\scriptsize $a$}
\psfrag{b}{\scriptsize $b$}
\psfrag{c}{\scriptsize $c$}
\psfrag{d}{\scriptsize $d$}
\psfrag{k}{\scriptsize k}
\epsfig{file=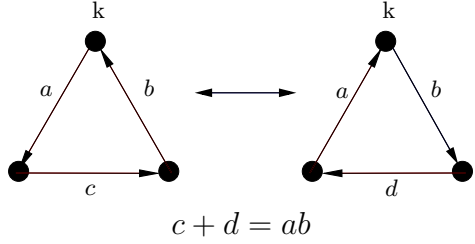,width=0.4\linewidth}\\
$c+d=ab$
\caption{Mutations of quivers}
\label{quivermut}

\end{center}
\end{figure}

Clearly, for given quiver the notion of {\it mutation class} is well-defined. We call a quiver {\it mutation-finite} if its mutation class is finite. Thus, we are able to reformulate the problem of classification of exchange matrices of finite type in terms of quivers: \emph{find all mutation-finite quivers}.

The following criterion for a quiver to be mutation-finite is well-known (see e.g.~\cite[Corollary~8]{DO})

\begin{theorem}
\label{less3}
A quiver $S$ of order at least $3$ is mutation-finite if and only if any quiver in the mutation class of $S$ contains no edges of weight greater than $2$.
\end{theorem}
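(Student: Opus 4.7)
I will assume throughout that $S$ is connected, since mutations preserve connected components and the disconnected case reduces component by component.

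The ``if'' direction is immediate: on a fixed labelled vertex set of size $n$ there are only finitely many quivers all of whose edge weights are at most $2$, so a uniform weight bound across the mutation class forces the class to be finite.

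For the ``only if'' direction my plan is to argue by contradiction. Suppose the mutation class $[S]$ is finite yet some quiver in $[S]$ contains an edge of weight $\geq 3$; let $M\geq 3$ be the maximum weight attained anywhere in $[S]$, and fix $Q^{*}\in[S]$ with an edge $ij$ of weight $M$. Mutations preserve connectedness (a direct edge $v_{p}v_{q}$ destroyed by a mutation at a vertex $k$ is replaced by the length-two path $v_{p}\,k\,v_{q}$, since cancellation can occur only when both endpoints are already adjacent to $k$), so $Q^{*}$ is connected and there exists a vertex $k\neq i,j$ adjacent in $Q^{*}$ to at least one of $i$ or $j$. The plan is then to perform at most three further mutations, restricted to $\{i,j,k\}$, so as to produce inside $[S]$ a triangle on $\{i,j,k\}$ carrying an edge of weight strictly greater than $M$, contradicting the maximality of $M$.

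There are two cases. \emph{Case $(1)$: $k$ adjacent to both $i$ and $j$}. The subquiver on $\{i,j,k\}$ is a triangle with weights $(M,a,b)$, $a,b\geq 1$. If it is acyclic I mutate at its middle vertex, transforming it into a cyclic triangle whose edge opposite the middle carries weight $r+pq\geq M+1$ (since one of $p,q,r$ equals $M$ and the others are $\geq 1$). If it is cyclic with weights $(p,q,r)$ and $p=M$, direct case analysis of the three opposite-mutations (producing weights $qr-M$, $Mr-q$, $Mq-r$) shows at least one strictly exceeds $M$, except in the single resistant configuration $(M,1,1)$; for that configuration one mutation opposite a weight-$1$ edge yields cyclic $(M,M-1,1)$, and a second mutation opposite the remaining weight-$1$ edge yields weight $M(M-1)-1=M^{2}-M-1>M$ for $M\geq 3$. \emph{Case $(2)$: $k$ adjacent to exactly one of $i,j$}, say to $i$ with weight $a$. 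Here at most one mutation at $j$ places the subquiver on $\{i,j,k\}$ into the chain form $j\to i\to k$, and a subsequent mutation at $i$ creates a new edge $jk$ of weight $aM$, producing the cyclic triangle $(M,a,aM)$. If $a\geq 2$ then $aM\geq 2M>M$; if $a=1$ we have the cyclic triangle $(M,M,1)$, and one further mutation opposite the weight-$1$ edge yields $M^{2}-1>M$.

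The key technical ingredient I rely on throughout is that the principal subquiver on $\{i,j,k\}$ evolves under any mutation at $i$, $j$, or $k$ exactly as a rank-$3$ quiver in its own right, because the formula for $B'_{pq}$ with $p,q\in\{i,j,k\}$ and mutation vertex $r\in\{i,j,k\}$ depends only on the three entries $B_{pq},B_{pr},B_{rq}$. This lets me carry out the entire analysis inside the three chosen vertices, independently of the rest of $Q^{*}$. The main obstacle I anticipate is the resistant cyclic configuration $(M,1,1)$ in Case $(1)$, where no single mutation strictly increases the maximum weight and the two-step iteration $(M,1,1)\to(M,M-1,1)\to(M,M-1,M^{2}-M-1)$ has to be spelled out explicitly.
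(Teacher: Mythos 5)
The paper does not actually prove this statement; it is quoted as well known with a pointer to \cite[Corollary~8]{DO}, so there is no internal proof to compare against. Your argument is correct and is essentially the standard one behind that reference: the ``if'' direction is a trivial counting bound, and for ``only if'' you restrict to a principal $3\times 3$ submatrix containing a maximal-weight edge (legitimate, since mutation at a vertex of a subset commutes with restriction to that subset) and show by a short case analysis that a weight $M\ge 3$ can always be strictly increased within at most three further mutations, contradicting finiteness. I checked the arithmetic in all branches, including the resistant cyclic configuration $(M,1,1)\to(M,M-1,1)\to(M,M-1,M^2-M-1)$ and the path case $(M,a)\to(M,a,aM)$ with the extra step when $a=1$; it all works, using only $1\le q,r\le M$, which follows from the maximality of $M$.

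One small caveat: your opening reduction ``the disconnected case reduces component by component'' is not quite right, because the statement as literally printed is false for disconnected quivers --- a weight-$3$ edge on two vertices together with an isolated vertex has order $3$ and a finite (two-element) mutation class. Connectedness (or at least the requirement that every component carrying a heavy edge have order $\ge 3$) is a genuine hypothesis, not a harmless normalization; the paper and \cite{DO} use the theorem only for connected quivers, so this is a defect of the statement rather than of your proof, but you should say explicitly that the component containing the heavy edge must itself have at least $3$ vertices for your argument to apply.
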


\medskip

One can use linear algebra tools to describe quiver mutations.

Let $e_1,\dots,e_n$ be a basis of vector space $V$ over field $k$ equipped with a skew-symmetric form
$\Omega$. Denote by $B$  the matrix of the form $\Omega$ with respect to the basis $e_i$, i.e. $B_{ij}=\Omega(e_i,e_j)$.

For each $i\in [1,n]$ we define new basis $e'_1,\dots,e'_n$ in the following way:
\begin{eqnarray*}
  e'_i &=& -e_i \\
  e'_j &=& e_j, \text{ if } \Omega(e_i,e_j)\ge 0 \\
  e'_j &=& e_j-\Omega(e_i,e_j)e_i, \text{ if } \Omega(e_i,e_j) < 0.
\end{eqnarray*}

Note that matrix $B'$ of the form $\Omega$ in basis $e'_k$ is the mutation of matrix $B$ in direction~$i$.

\medskip

From now on, we use language of quivers only. Let us fix some notations we will use throughout the paper.

Let $S$ be a quiver. A {\it subquiver} $S_1\subset S$ is a subcomplex of $S$. The {\it order}  $|S|$ is the number of vertices of quiver $S$.
If $S_1$ and $S_2$ are sub\-quivers of quiver $S$, we denote by $\l S_1,S_2\r$ the sub\-quiver of $S$ spanned by all the vertices of $S_1$ and $S_2$.

Let $S_1$ and $S_2$ be subquivers of $S$ having no common vertices. We say that $S_1$ and $S_2$ are {\it orthogonal} ($S_1\perp S_2$) if no edge joins vertices of $S_1$ and $S_2$.

We denote by ${\rm Val}_{S}(v)$ the unsigned valence of $v$ in $S$ (a double edge adds two to the valence). A {\it leaf} of $S$ is a vertex joined with exactly one vertex in $S$. 

\section{Ideas of the proof}
\label{main}

In this section we present all key steps of the proof.

We need to prove that all mutation-finite quivers except some finite number of mutation classes satisfy some special properties, namely they are block-decomposable (see Definition~\ref{defblock}). In Section~\ref{min} we define a {\it minimal non-decomposable quiver} as a non-decomposable quiver minimal with respect to inclusion (see Definition~\ref{defmin}). By definition, any non-decomposable quiver contains a minimal non-decomposable quiver as a subquiver. First, we prove the following theorem:

\setcounter{theorem}{1}
\setcounter{section}{5}
\begin{theorem}
Any minimal non-decomposable quiver contains at most $7$ vertices.

\end{theorem}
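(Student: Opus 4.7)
The plan is to assume $S$ is a minimal non-decomposable quiver on $n$ vertices and prove $n\le 7$ by combining a local analysis at a single vertex with a global consistency check coming from the many available decompositions.

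First, I would exploit the minimality hypothesis in its strongest form: for every $v\in S$ the quiver $S_v:=S\setminus v$ is block-decomposable, so by the classification of blocks recalled in Section~\ref{blockdecomp} it arises from a finite list of elementary blocks glued along outlet vertices. Fixing such a decomposition for one chosen $v$, I would catalogue how the star of $v$ can meet the blocks of $S_v$: mutation-finiteness (Theorem~\ref{less3}) forces every edge of $S$ incident to $v$ to have weight at most $2$, and the same is true inside $S_v$. Since $S$ itself is not block-decomposable, the attached star of $v$ must destroy at least one block-decomposability axiom, and I would enumerate the possible \emph{minimal obstructions} that $v$ can produce. The key observation is that each such obstruction involves $v$ together with a bounded number of vertices of $S_v$ (at most about six), because the blocks themselves have bounded size and only a handful can simultaneously interact with the star of $v$.

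Next, I would apply the minimality hypothesis a second time, now at other vertices. For each $u\in S\setminus\{v\}$ the quiver $S_u$ is also block-decomposable, so the block decomposition of $S_v$ restricted to $S\setminus\{u,v\}$ must be extendable to a block decomposition of $S_u$. In particular, if $u$ lies outside every minimal obstruction centered at $v$, then the obstruction survives in $S_u$; but then the presence of the obstruction in the block-decomposable $S_u$ already contradicts the fact that it was minimal to begin with. This forces every vertex of $S$ to lie in at least one minimal obstruction at $v$, and hence $n-1$ is bounded by the total size of these obstructions. Carrying out the bookkeeping in each of the enumerated cases yields $n\le 7$, with the extremal configurations matching the exceptional types $E_7$, $\widetilde E_7$, $E_7^{(1,1)}$, and $X_7$ that will reappear in Section~\ref{class}.

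The main obstacle I expect is the case analysis in the local step: the attachment of $v$ can have several qualitatively different patterns (attaching to an outlet vertex, to an interior vertex of a single block, to outlets of two different blocks, via a double edge, etc.), and each pattern produces its own list of minimal obstructions. I would organize the argument by first treating low-valence attachments (valence one or two), where the analysis is essentially by inspection, and then reducing larger valences by observing that deleting any neighbour $u$ of $v$ lowers the valence of $v$ in $S_u$ to a case already handled, while the decomposition of $S_u$ must be compatible with the remaining edges at $v$. The unavoidable finite enumeration of attachment patterns that survive both the local obstruction count and the global compatibility would be handled along the computational lines indicated in Section~\ref{main}.
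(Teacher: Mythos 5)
Your proposal hinges on the notion of a ``minimal obstruction'': a bounded-size configuration around $v$ whose presence certifies that $S$ is not block-decomposable. This is where the argument breaks down, and in fact becomes circular. Block-decomposability is not a local property cut out by finitely many forbidden subquivers of bounded size --- a decomposition is a global matching of outlets --- and the assertion that every failure of decomposability is witnessed by a subquiver on $v$ plus ``at most about six'' vertices is essentially equivalent to the theorem you are trying to prove. Worse, in the present situation there are no proper obstructions to enumerate: since any subquiver of a block-decomposable quiver is block-decomposable, every proper subquiver of a minimal non-decomposable $S$ (being contained in some $S\setminus u$) is itself block-decomposable, so the only non-decomposable subquiver of $S$ is $S$ itself. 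Your second step --- ``if $u$ lies outside every obstruction at $v$ then the obstruction sits inside the decomposable $S_u$, contradiction'' --- therefore never fires and yields no bound on $n$. (The claim that the extremal cases are $E_7$, $\widetilde E_7$, $E_7^{(1,1)}$, $X_7$ is also off: the minimal non-decomposable mutation-finite quivers turn out to be the order-$6$ quivers $E_6$ and $X_6$, and Theorem~\ref{g8} does not assume mutation-finiteness at all.)

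The paper's proof goes in the opposite direction: assuming $|S|\ge 8$, it does not search for a local obstruction but instead \emph{constructs} a block decomposition of $S$, contradicting non-decomposability. The hard work is a gluing mechanism (Propositions~\ref{razval1} and~\ref{razval2}) that assembles decompositions of $S$ from decompositions of proper subquivers, together with a chain of lemmas that successively rule out blocks of types ${\rm V}$, ${\rm IV}$ and ${\rm III}$ in any decomposition of any $S\setminus x$, bound all valences by $4$, pin down the links of valence-$3$ and valence-$4$ vertices, and exclude double edges. With these constraints every $3$-cycle of $S$ is oriented, every edge lies in at most one $3$-cycle, and deleting the $3$-cycle edges leaves a forest; taking the remaining edges as blocks of type ${\rm I}$ and the $3$-cycles as blocks of type ${\rm II}$ gives an explicit decomposition of $S$. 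The hypothesis $|S|\ge 8$ is used throughout only to guarantee enough spare vertices to delete, not as the size of a forbidden configuration. To salvage your outline you would need precisely such a gluing statement turning decompositions of the $S\setminus u$ into one of $S$, which is the missing and genuinely difficult ingredient.
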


The proof of Theorem~\ref{g8} contains the bulk of all the technical details in the paper. We assume that there exists a minimal non-decomposable quiver of order at least $8$, and investigate the structure of block decompositions of proper subquivers of $S$. By exhaustive case-by-case consideration we prove that $S$ is also block-decomposable. The main tools are Lemmas~\ref{razval1} and~\ref{razval2} which under some assumptions produce a block decomposition of $S$ from block decompositions of proper subquivers of $S$.

The next step is to prove the following key theorem:

\setcounter{theorem}{10}
\setcounter{section}{5}
\begin{theorem}
Any minimal non-decomposable mutation-finite qui\-ver is mutation-equivalent to one of the two quivers $X_6$ and $E_6$ shown below.
\begin{center}
\psfrag{E}{$E_6$}
\psfrag{X}{$X_6$}
\epsfig{file=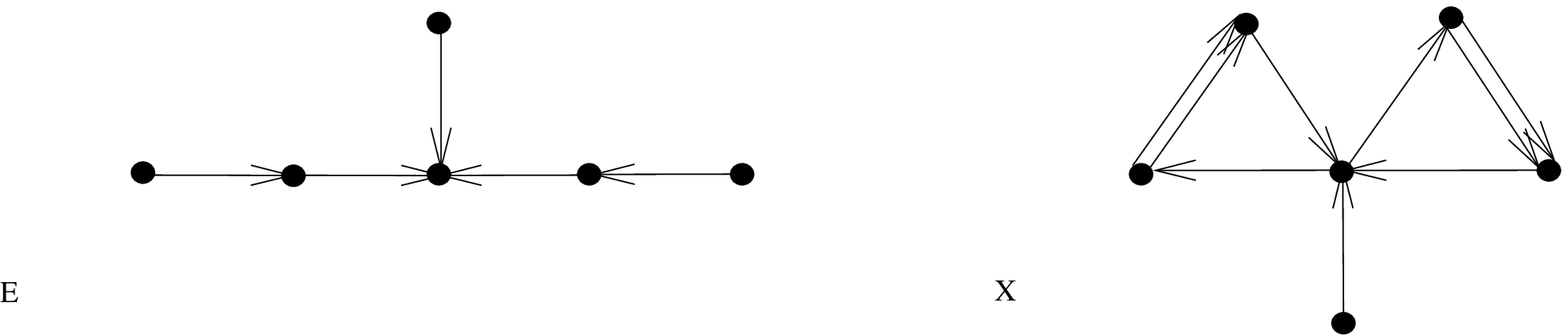,width=0.7\linewidth}
\end{center}
\end{theorem}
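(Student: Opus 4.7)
The plan is to use the rank bound from the preceding theorem (so $|S|\le 7$) together with the minimality hypothesis and mutation-finiteness to isolate the two mutation classes. First I would dispose of small orders: every mutation-finite quiver of order $n\le 5$ should be shown to be block-decomposable, either by direct enumeration combined with the gluing lemmas of Section~\ref{blockdecomp}, or by appealing to the classification of small mutation-finite quivers. This forces $|S|\ge 6$. Throughout the analysis I would use Theorem~\ref{less3}: every edge in $S$ and in every mutation of $S$ has weight at most $2$, so only simple and double edges appear.

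For $|S|=6$, the main case, I would fix an arbitrary vertex $v\in S$. By minimality, the proper subquiver $S\setminus v$ is block-decomposable and mutation-finite of order $5$. I would enumerate all such decompositions and, for each, classify all ways of reattaching $v$ so as to satisfy simultaneously: (a) every other 5-vertex subquiver $S\setminus w$ is block-decomposable; (b) all edges of $S$ and of every mutation of $S$ have weight at most $2$; and (c) $S$ itself is not block-decomposable. The gluing criteria from Section~\ref{blockdecomp} are the main tool here: in most configurations they already extend the decomposition of $S\setminus v$ to a decomposition of $S$, contradicting (c), so only a short list of candidate attachments can survive. After applying a small sequence of mutations to pass to a canonical representative, the surviving candidates should collapse to exactly the two pictured quivers $E_6$ and $X_6$.

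For $|S|=7$ I would aim to rule out the case completely. Given such an $S$, every 6-vertex subquiver $S\setminus v$ is block-decomposable by minimality, and in particular lies in neither the mutation class of $E_6$ nor that of $X_6$. Feeding this restriction into the attachment analysis already developed for order $6$, every way of extending a block-decomposable 6-vertex quiver by a seventh vertex must either yield a decomposable quiver (via the gluing lemmas), violate the weight bound under some mutation, or produce a 6-vertex subquiver mutation-equivalent to $E_6$ or $X_6$ — each possibility contradicts either non-decomposability, mutation-finiteness, or minimality of $S$. The reverse direction, that $E_6$ and $X_6$ are themselves minimal non-decomposable and mutation-finite, is comparatively routine and should follow by inspecting their mutation classes against the block catalogue.

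The hardest step will be the attachment analysis for $|S|=6$: the combined number of 5-vertex block-decomposable mutation-finite quivers, their block decompositions, and the ways to attach $v$ is large, so the argument has to be organized carefully — for example by the block type hosting an outlet of $v$, by the valence profile of $v$, and by mutation-equivalence among intermediate candidates — and the routine but voluminous sub-cases are naturally delegated to computer verification. The structural rigidity extracted from gluing lemmas applied simultaneously to every 5-vertex deletion is the key leverage that makes the enumeration terminate with only the two claimed mutation classes.
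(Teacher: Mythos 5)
Your proposal is correct and follows essentially the same route as the paper: both use the order bound $|S|\le 7$ from Theorem~\ref{g8} together with the weight bound of Theorem~\ref{less3} to reduce everything to a finite, computer-assisted enumeration of mutation-finite quivers of order at most $7$ and a check of block-decomposability, with $E_6$ and $X_6$ emerging as the only minimal non-decomposable classes (the paper organizes the enumeration by inductively attaching a vertex to one representative of each finite mutation class of order $n$ and testing finiteness with Keller's applet, rather than by analyzing block decompositions of the vertex-deleted subquivers, but this is only a bookkeeping difference). The one point to make explicit in your order-$7$ step is that one must verify that \emph{every} quiver in the three non-decomposable order-$7$ classes ($E_7$, $\widetilde E_6$, $X_7$) contains a subquiver mutation-equivalent to $E_6$ or $X_6$, since minimality is a property of the individual quiver rather than of its mutation class; the paper performs exactly this check on all $416+132+2$ quivers in those classes.
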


The proof is based on the fact that the number of mutation-finite quivers of order at most $7$ is finite, and all such quivers can be easily classified. For that, we use an inductive procedure: we take one representative from each finite mutation class of quivers of order $n$ and attach a vertex by edges of multiplicity at most $2$
in all possible ways (here we use Theorem~\ref{less3}). For each obtained quiver we check if its mutation class is finite (by using Keller's applet for quivers mutations~\cite{K}). In this way we get all the finite mutation classes of quivers of order $n+1$. After collecting all finite mutation classes of order at most $7$, we analyze whether they are block-decomposable. It occurs that all the classes except ones containing $X_6$ and $E_6$ are block-decomposable. The two quivers $X_6$ and $E_6$ are non-decomposable by~\cite[Propositions 4 and 6]{DO}).

Therefore, we proved that each mutation-finite non-decompos\-able quiver contains a subquiver mutation-equivalent to $X_6$ or $E_6$ (Corollary \ref{contmin}). This allows us to use the same inductive procedure to get all the finite classes of non-decomposable quivers.
We attach a vertex to $X_6$ and $E_6$ by edges of multiplicity at most $2$ in all possible ways. In this way we get all the finite mutation classes of non-decomposable quivers of order $7$. More precisely, there are $3$ of them, namely those containing $X_7$, $E_7$ and $\widetilde E_6$. Any mutation-finite non-decomposable quiver of order $8$ should contain a subquiver mutation-equivalent to one of these $3$ quivers due to the following lemma:

\setcounter{theorem}{3}
\setcounter{section}{6}
\begin{lemma}
Let  $S_1$ be a proper subquiver of $S$, let $S_0$ be a quiver mutation-equivalent to $S_1$. Then there exists a quiver $S'$ which is mutation-equivalent to $S$ and contains $S_0$.

\end{lemma}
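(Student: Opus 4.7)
The plan is to run the same mutation sequence on $S$ that transforms $S_1$ into $S_0$. Since $S_0$ is mutation-equivalent to $S_1$, there is a sequence $\mu_{k_m}\circ\cdots\circ\mu_{k_1}$, with each $k_i$ a vertex of $S_1$, such that $(\mu_{k_m}\circ\cdots\circ\mu_{k_1})(S_1)=S_0$. Because $S_1\subset S$, each $k_i$ is also a vertex of $S$, so I would define $S':=(\mu_{k_m}\circ\cdots\circ\mu_{k_1})(S)$. By construction $S'$ is mutation-equivalent to $S$, and the only remaining task is to exhibit $S_0$ as a subquiver of $S'$.

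The key observation is that mutation at a vertex inside a full subquiver commutes with restriction to that subquiver. Concretely: for any quiver $Q$, any vertex $k$ of $Q$, and any subset $V$ of vertices of $Q$ with $k\in V$, the full subquiver of $\mu_k(Q)$ spanned by $V$ coincides with $\mu_k$ applied to the full subquiver of $Q$ spanned by $V$. This is immediate from the mutation formula~\eqref{eq:MatrixMutation}: for $i,j\in V\setminus\{k\}$ the new entry $B'_{ij}$ is determined solely by $B_{ij}$, $B_{ik}$, and $B_{kj}$, all of which already appear in the exchange matrix of the subquiver on $V$; while entries involving $k$ merely change sign, in the same way whether computed inside the subquiver or inside $Q$.

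Applying this local fact inductively along the sequence $\mu_{k_1},\dots,\mu_{k_m}$ (all of whose mutation vertices lie in the vertex set $V$ of $S_1$), I would conclude that the full subquiver of $S'$ on $V$ equals $(\mu_{k_m}\circ\cdots\circ\mu_{k_1})(S_1)=S_0$. In particular $S'$ contains $S_0$, as required. The argument is largely formal; the only point worth flagging is the convention that a \emph{subquiver} is treated as a full (induced) subcomplex on its vertex set, which is what makes mutation-equivalence of subquivers a well-defined notion in the first place. I do not anticipate any substantial obstacle.
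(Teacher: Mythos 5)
Your proof is correct and is precisely the standard argument the authors have in mind: the paper simply declares the proof of this lemma "evident" and gives no details, and the evident argument is exactly yours — mutate $S$ along the same sequence of vertices of $S_1$, using the fact that for $i,j$ in the vertex set of the (induced) subquiver the mutation rule~\eqref{eq:MatrixMutation} involves only entries indexed by that vertex set, so restriction commutes with mutation. Your remark about subquivers being full (induced) subquivers is consistent with the paper's usage, so there is nothing to add.
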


Using the same procedure, we list one-by-one all the mutation-finite non-decompos\-able quivers of order $8$, $9$ and $10$. The results are the entries from the list by Derksen-Owen (see Fig.~\ref{allfig}). Applying the inductive procedure to a unique mutation-finite non-decomposable quiver $E_8^{(1,1)}$ of order $10$, we obtain no mutation-finite quivers. Now we use the following statement:

\setcounter{theorem}{2}
\setcounter{section}{6}
\begin{cor}
Suppose that for some $d\ge 7$ there are no non-de\-com\-po\-sable  mutation-finite quivers of order $d$. Then order of any non-decomposable  mutation-finite quiver does not exceed $d-1$.

\end{cor}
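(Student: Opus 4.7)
My plan is to argue by contradiction, using Theorem~\ref{g8} and the fact that block-decomposability is inherited by subquivers. I would assume that some non-decomposable mutation-finite quiver $S$ of order $n\ge d$ exists and derive a contradiction. The case $n=d$ is immediate from the hypothesis, so the interesting case is $n>d$, where my goal is to locate inside $S$ a non-decomposable mutation-finite subquiver of order exactly $d$.

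To produce such a subquiver, I would apply Theorem~\ref{g8} to find a minimal non-decomposable subquiver $M\subseteq S$; by that theorem $|M|\le 7$, and since $d\ge 7$ we have $|M|\le d<n$. Then I would form a saturated chain
\[
M=T_{|M|}\subset T_{|M|+1}\subset\cdots\subset T_n=S
\]
by adjoining the vertices of $S\setminus M$ to $M$ one at a time, so that $|T_k|=k$. Each $T_k$ is a subquiver of the mutation-finite quiver $S$, and therefore mutation-finite itself: mutations at vertices of $T_k$ commute with restriction from $S$ to $T_k$, so the mutation class of $T_k$ is obtained from that of $S$ by restriction and is in particular finite (alternatively, this can be read off Theorem~\ref{less3}, since any edge of weight exceeding $2$ appearing in the mutation class of $T_k$ would already appear in that of $S$). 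On the other hand, every $T_k$ contains the non-decomposable subquiver $M$, so by the hereditary property of block-decompositions from Section~\ref{blockdecomp} (a subquiver of a block-decomposable quiver is block-decomposable, equivalently, a quiver containing a non-decomposable subquiver is non-decomposable), each $T_k$ is non-decomposable. Taking $k=d$ then produces a non-decomposable mutation-finite quiver of order exactly $d$, contradicting the hypothesis.

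The only non-routine ingredient is the hereditary property of block-decomposability just invoked. This is the main point to extract from Section~\ref{blockdecomp} and, intuitively, should follow by restricting a block-decomposition of the ambient quiver to a subset of vertices and refining the resulting partial blocks into genuine blocks. Once this hereditary property is in place, the corollary reduces, as above, to a one-step argument using Theorem~\ref{g8}, with no additional case analysis or quiver-specific computation required.
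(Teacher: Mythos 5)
Your argument is correct and does prove the corollary. It differs mildly from the paper's route: the paper deduces Corollary~\ref{p1} from Lemma~\ref{m1}, which locates inside $S$ a subquiver mutation-equivalent to $E_6$ or $X_6$ via Corollary~\ref{contmin} --- and hence relies on the computer-assisted classification of Theorem~\ref{min} --- and then drops one vertex at a time. You instead use only Theorem~\ref{g8}, i.e.\ the purely combinatorial bound that a minimal non-decomposable quiver has at most $7$ vertices, together with the fact (recorded in Section~\ref{main}) that every non-decomposable quiver contains a minimal non-decomposable subquiver (take a non-decomposable subquiver of smallest order), and you grow that core up to order exactly $d$ in one sweep; this is logically lighter than the paper's chain of dependencies. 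The two hereditary facts you invoke are both available: a subquiver of a block-decomposable quiver is block-decomposable (remark after Definition~\ref{defblock}), and a subquiver of a mutation-finite quiver is mutation-finite (your restriction argument, or Theorem~\ref{less3}). One small point worth making explicit: the paper implicitly works with connected quivers here (Lemma~\ref{m1} produces a \emph{connected} subquiver, and the computation feeding Corollary~\ref{p1} in the proof of Theorem~\ref{all} only excludes connected quivers of order $11$); if the hypothesis is read in that sense, you should choose your chain so that each $T_k$ is connected, which is easy since $M$ is connected and $S$ may be assumed connected (a non-decomposable quiver has a non-decomposable connected component).
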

Corollary~\ref{p1} implies that there is no non-decomposable mutation-finite quiver of order at least $11$, which completes the proof of the Main Theorem.

\setcounter{section}{3}
\section{Block decompositions of quivers}
\label{blockdecomp}

Let us start with definition of block-decomposable quivers (we re\-phrase Definition 13.1 from~\cite{FST}).

\begin{definition}
\label{defblock}
A {\it block} is a quiver isomorphic to one of the quivers with black/white colored vertices shown on Fig.~\ref{bloki}, or to a single vertex. Vertices marked in white are called {\it outlets}. A connected quiver $S$ is called {\it block-decomposable} if it can be obtained from a collection of blocks by identifying outlets of different blocks along some partial matching (matching of outlets of the same block is not allowed), where two edges with same endpoints and opposite directions cancel out, and two edges with same endpoints and same directions form an edge of weight $2$. A non-connected quiver $S$ is called  block-decomposable either if $S$ satisfies the definition above, or if $S$ is a disjoint union of several mutually orthogonal quivers satisfying the definition above.
If $S$ is not block-decomposable then we call $S$ {\it non-decomposable}. Depending on a block, we call it {\it a block of type} $\rm{I}$, $\rm{II}$, $\rm{III}$, $\rm{IV}$, $\rm{V}$, or simply {\it a block of $n$-th type}.

\begin{figure}[!h]
\begin{center}
\psfrag{1}{$\B_{\rm{I}}$}
\psfrag{2}{$\B_{\rm{II}}$}
\psfrag{3a}{$\B_{\rm{IIIa}}$}
\psfrag{3b}{$\B_{\rm{IIIb}}$}
\psfrag{4}{$\B_{\rm{IV}}$}
\psfrag{5}{$\B_{\rm{V}}$}
\epsfig{file=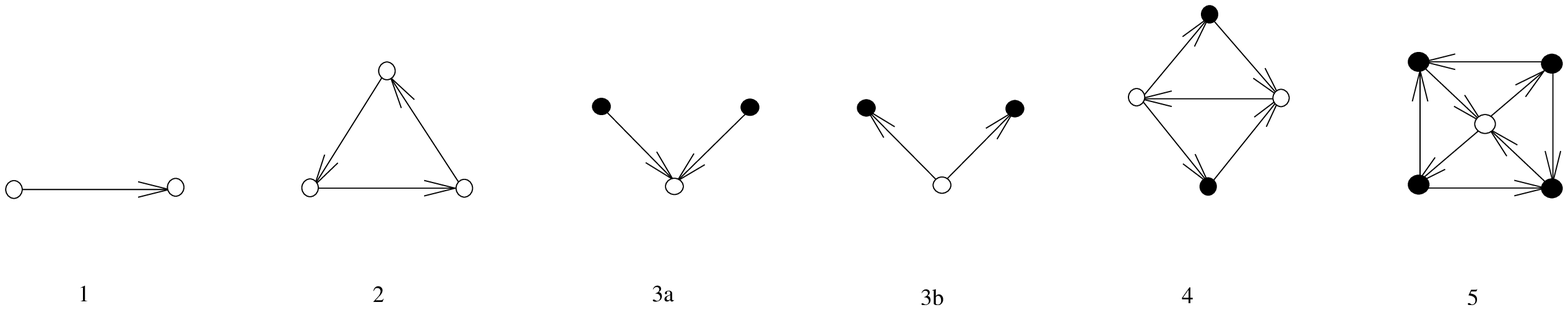,width=0.999\linewidth}
\caption{Blocks. Outlets are colored white, dead ends are black.}
\label{bloki}
\end{center}
\end{figure}

\end{definition}

We denote by $\B_{\rm{I}}$, $\B_{\rm{II}}$ etc. the isomorphism classes of blocks of types $\rm{I}$, $\rm{II}$, etc. respectively. For a block $B$ we write $B\in\B_{\rm{I}}$ if B is of type $\rm{I}$.

\begin{remark}
It is shown in~\cite{FST} that block-decomposable quivers have a nice geometrical interpretation:
they are in one-to-one correspondence with adjacency matrices of arcs of ideal (tagged) triangulations of bordered two-dimensional surfaces with marked points (see~\cite[Section~13]{FST} for the detailed explanations). Mutations of block-decomposable quivers correspond to flips of triangulations.

\begin{figure}[!h]
\begin{center}
\epsfig{file=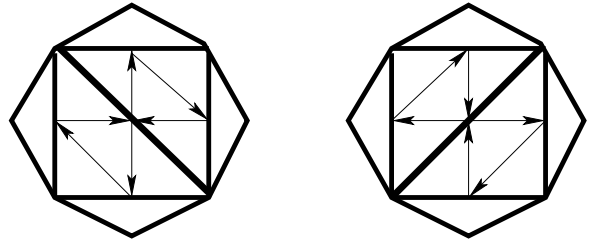,width=0.499\linewidth}
\caption{Fat sides denote arcs of triangulations. Arrows form corresponding quiver.}
\end{center}
\end{figure}

In particular, this description implies that mutation class of any block-decomposable quiver is finite (indeed, the absolute value of an entry of adjacency matrix can not exceed $2$). Another immediate corollary is that any subquiver of a block-decomposable is block-decomposable too.

\end{remark}

\begin{remark}
As the following example shows, a block decomposition of quiver (if exists) may not be unique.
\end{remark}

\begin{example}
There are two ways to decompose an oriented triangle into blocks, see Fig.~\ref{dif_decomposition}.

\begin{figure}[!h]
\begin{center}
\psfrag{b}{\small $B$}
\psfrag{b1}{\small $B_1$}
\psfrag{b2}{\small $B_2$}
\psfrag{b3}{\small $B_3$}
\epsfig{file=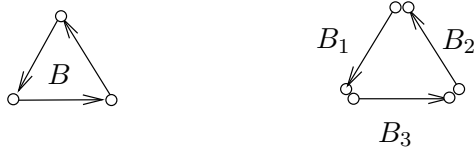,width=0.399\linewidth}
\caption{Two different decompositions of an oriented triangle into blocks}
\label{dif_decomposition}
\end{center}
\end{figure}

\end{example}

We say that a vertex of a block is a {\it dead end} if it is not an outlet.
\begin{remark}\label{rmrk:DeadEndEdge}
Notice that if $S$ is decomposed into blocks, and $u\in S$ is a dead end of some block $B$, then any edge of $B$ incident to $u$ can not cancel with any other edge of another block and therefore must appear in $B$ with weight $1$.
\end{remark}

We call a vertex $u\in S$  an {\it outlet of $S$} if $S$ is block-decomposable and there exists a block decomposition of $S$ such that $u$ is contained in exactly one block $B$, and $u$ is an outlet of $B$. 

We use the following notations. For two vertices $u_i,u_j$ of quiver $S$ we denote by
$(u_i,u_j)$ a directed arc connecting $u_i$ and $u_j$ which may or may not belong to $S$. It may be directed  either way. By $(u_i,u_j,u_k)$ we denote oriented triangle with vertices $u_i,u_j,u_k$ which is oriented either way and whose edges also may or may not belong to $S$.
We use standard notation $\l u_i,u_j\r$ for an edge of $S$.

While drawing quivers, we keep the following notation:

\begin{itemize}
\item
a non-oriented edge is used when orientation does not play any role in the proof;

\item
\psfrag{u}{\tiny $u$}
\psfrag{v}{\tiny $v$}
an edge \ \ \epsfig{file=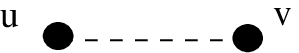,width=0.15\linewidth} \ \ is an edge of a block containing $u$ and $v$, where $u$ and $v$ are not joined in the quiver. The figure assumes a fixed block decomposition;
\
\item
\psfrag{x}{\tiny $x$}
\psfrag{a}{\tiny $a$}
an edge \ \ \epsfig{file=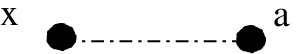,width=0.15\linewidth} \ \ means that $x$ is joined with $a$ by some edge.

\end{itemize}


\begin{prop}
\label{razval1}
Let $S$ be a connected quiver with $n$ vertices, and let $b$ be a vertex of $S$ satisfying the following properties:

$(0)$ $S\setminus b$ is not connected;

$(1)$ for any $u\in S$ the quiver $S\setminus u$ is block-decomposable;

$(2)$ at least one connected component of $S\setminus b$ has at least $3$ vertices;

$(3)$ each connected component of $S\setminus b$ has at most $n-3$ vertices.

\smallskip
\noindent
Then $S$ is block-decomposable.

\end{prop}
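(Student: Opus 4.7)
The plan is to realise $S$ as the gluing along $b$ of two block-decomposable pieces meeting only at $b$. Denote by $C_1,\dots,C_k$ the connected components of $S\setminus b$, labelled so that $|C_1|\geq 3$ by (2); by (3) we have $k\geq 2$, and in the case $k=2$ also $|C_2|\geq 2$. Since $C_1$ and the other components $C_j$ ($j\geq 2$) sit in different components of $S\setminus b$, no edges of $S$ connect $C_1$ with $\bigcup_{j\geq 2}C_j$, so the subquivers
\[
Q_1=\l\{b\}\cup C_2\cup\cdots\cup C_k\r\quad\text{and}\quad Q_2=\l\{b\}\cup C_1\r
\]
satisfy $S=Q_1\cup Q_2$ with $Q_1\cap Q_2=\{b\}$.

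Choose $v_1\in C_1$ and $v_2\in C_j$ for some $j\geq 2$ so that $b$ has a neighbour in $C_1\setminus v_1$ and another in $C_j\setminus v_2$; such choices exist since $b$ has at least one neighbour in every component of $S\setminus b$, together with the size bounds $|C_1|\geq 3$ and $|C_j|\geq 2$ (when $k=2$). By (1), the quivers $S\setminus v_1$ and $S\setminus v_2$ are block-decomposable; fix decompositions $\mathcal D_1$ and $\mathcal D_2$ of them.

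The technical heart of the proof is the claim that in $\mathcal D_1$ (and symmetrically in $\mathcal D_2$) every block has its non-$b$ vertices contained in a single component of $(S\setminus v_1)\setminus b$. Any block with two vertices in distinct components $C_i,C_j$ would contribute, through its dead-end edges at $b$ or through a non-cancellable edge between the two vertices, an unwanted edge of $S$ between $C_i$ and $C_j$, contradicting the component structure of $S\setminus b$; a case-by-case inspection of the block types~$\rm{I}$--$\rm{V}$ of Figure~\ref{bloki} rules out all remaining configurations. Because by our choice of $v_1,v_2$ the vertex $b$ has neighbours in at least two distinct components of each $(S\setminus v_i)\setminus b$, this claim forces $b$ to lie in at least two blocks of each $\mathcal D_i$, and hence to be an outlet of every block containing it.

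Once this is established, partition $\mathcal D_1$ into blocks contained in $Q_1$ and blocks contained in $(C_1\setminus v_1)\cup\{b\}$, and perform the analogous partition for $\mathcal D_2$. Gluing the $Q_1$-part of $\mathcal D_1$ with the $Q_2$-part of $\mathcal D_2$ by identifying the outlet copies of $b$ yields a block decomposition of $S=Q_1\cup Q_2$: edges within $C_1$ and between $C_1$ and $b$ come from $\mathcal D_2$, edges within each $C_j$ ($j\geq 2$) and between those $C_j$ and $b$ come from $\mathcal D_1$, and no cross-component edges are introduced since only $b$ is identified. The main obstacle is the block-type case analysis in the third paragraph; it is exactly the flexibility afforded by the size conditions~(2) and~(3) in choosing $v_1,v_2$ that makes that argument go through.
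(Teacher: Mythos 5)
Your overall strategy is the paper's: split $S$ at the cut vertex $b$ into two pieces meeting only at $b$, delete one well-chosen vertex from each side, invoke hypothesis (1), show that no block of the resulting decomposition straddles the two sides, and glue the decompositions at $b$. The gap is in your third paragraph, which is where essentially all of the work lies, and the mechanism you offer for the contradiction is wrong for most block types. A block $B$ meeting two components $C_i$, $C_j$ forces a visible edge of $S$ between them only when the straddling edge is incident to a dead end (types ${\rm{IV}}$ and ${\rm{V}}$, and type ${\rm{III}}$ when $b$ is not its outlet). For $B\in\B_{\rm{I}}$ or $B\in\B_{\rm{II}}$ the straddling edge joins two outlets and can perfectly well be cancelled by an edge of another block, so no ``unwanted edge of $S$'' appears; the paper's contradiction in those cases is of a different nature (the cancelling configuration isolates the vertices involved, so that $S$ or $S\setminus a_2$ becomes disconnected, or else one of them becomes a leaf attached to $b$ inside $S_1$). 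For $B\in\B_{\rm{III}}$ with outlet $b$, the two dead ends are simply leaves of $S$ hanging off $b$, one in each component --- again no forbidden edge between $C_i$ and $C_j$ arises, and the only way to exclude this configuration is to have chosen the deleted vertex so that no such leaves survive.

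This is exactly why the paper imposes stronger conditions on $a_1,a_2$ than you do on $v_1,v_2$: it requires $S\setminus a_i$ to be connected \emph{and} to contain no leaves attached to $b$ lying in $S_1$, and it spends a nontrivial argument (using hypotheses (2) and (3)) showing that such vertices exist. Your condition --- that $b$ merely retain a neighbour in $C_1\setminus v_1$ and in $C_j\setminus v_2$ --- is too weak to run the type ${\rm{I}}$, ${\rm{II}}$ and ${\rm{III}}$ cases: it neither prevents the leaf configurations nor guarantees the connectivity used in the disconnection arguments. The skeleton and the final gluing step are right, but the statement you call the technical heart is asserted rather than proved, and the one-line justification given for it does not survive contact with the actual list of blocks.
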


\begin{proof}
We divide $S\setminus b$ into two parts $S_1$ and $S_2$ in the following way: $S_1$ is any connected component of $S\setminus b$ with at least $3$ vertices (it exists by assumption $(2)$), and $S_2=S\setminus \l b,S_1\r$. Notice that assumption $(3)$ implies that $|S_2|\ge 2$.

Now choose points $a_1\in S_1$ and $a_2\in S_2$ satisfying the following conditions: $S\setminus a_i$ is connected,
and $S\setminus a_i$ does not contain leaves attached to $b$ and belonging to $S_1$. We always can take as $a_2$ a
vertex of $S_2$ at the maximal distance from $b$. To choose $a_1\in S_1$, we look at the vertices on maximal distance from $b$ in $\l S_1,b\r$. 
If the maximal distance from $b$ in $S_1\cup b$ is greater than $2$, then we may take as $a_1$ any vertex of $S_1$ at the maximal distance from $b$: in this case $\l S_1,b\r\setminus a_1$ does not contain leaves attached to $b$.  If $S_1$ contains a leaf of $S$, then we can take as $a_1$ this leaf. This does not produce leaves of $\l S_1,b\r\setminus a_1$ since $S_1$ is connected and $|S_1|\ge 3$. Finally, if the maximal distance from $b$ in $S_1\cup b$ is $2$, and each vertex on distance $2$ is not a leaf, we take as $a_1$ any neighbour of $b$ with minimal number of neighbours in $S_1$. Again, this does not produce leaves of $\l S_1,b\r\setminus a_1$.


We will prove now that each $\l S_i,b\r$ is block-decomposable with outlet $b$. Since $b$ is the only common vertex of $\l S_1,b\r$ and $\l S_2,b\r$, this will imply that $S$ is block-decomposable.

Consider the quiver $S\setminus a_2$. It is block-decomposable by assumption $(1)$. Choose any its decomposition into blocks. Let us prove that for any block $B$ either $B\cap S_1=\emptyset$ or $B\cap (S_2\setminus a_2)=\emptyset$. In particular, this will imply that $S_1$ is block-decomposable, and $b$ is an outlet (since $|S_2|\ge 2$ and $S\setminus a_2$ is connected). Suppose that for some $B$ both intersections $B\cap S_1$ and $B\cap (S_2\setminus a_2)$ are not empty. We consider below all possible types of block $B$.
Table~\ref{razval1_} illustrates the plan of the proof.

\begin{table}[!h]
\begin{center}
\caption{To the proof of Proposition~\ref{razval1}}
\label{razval1_}
\psfrag{1}{\bf 1}
\psfrag{2}{\bf 2}
\psfrag{3}{\bf 3}
\psfrag{4}{\bf 4}
\psfrag{B=}{$B=$}
\psfrag{or}{\scriptsize or}
\psfrag{b}{\scriptsize $b$}
\epsfig{file=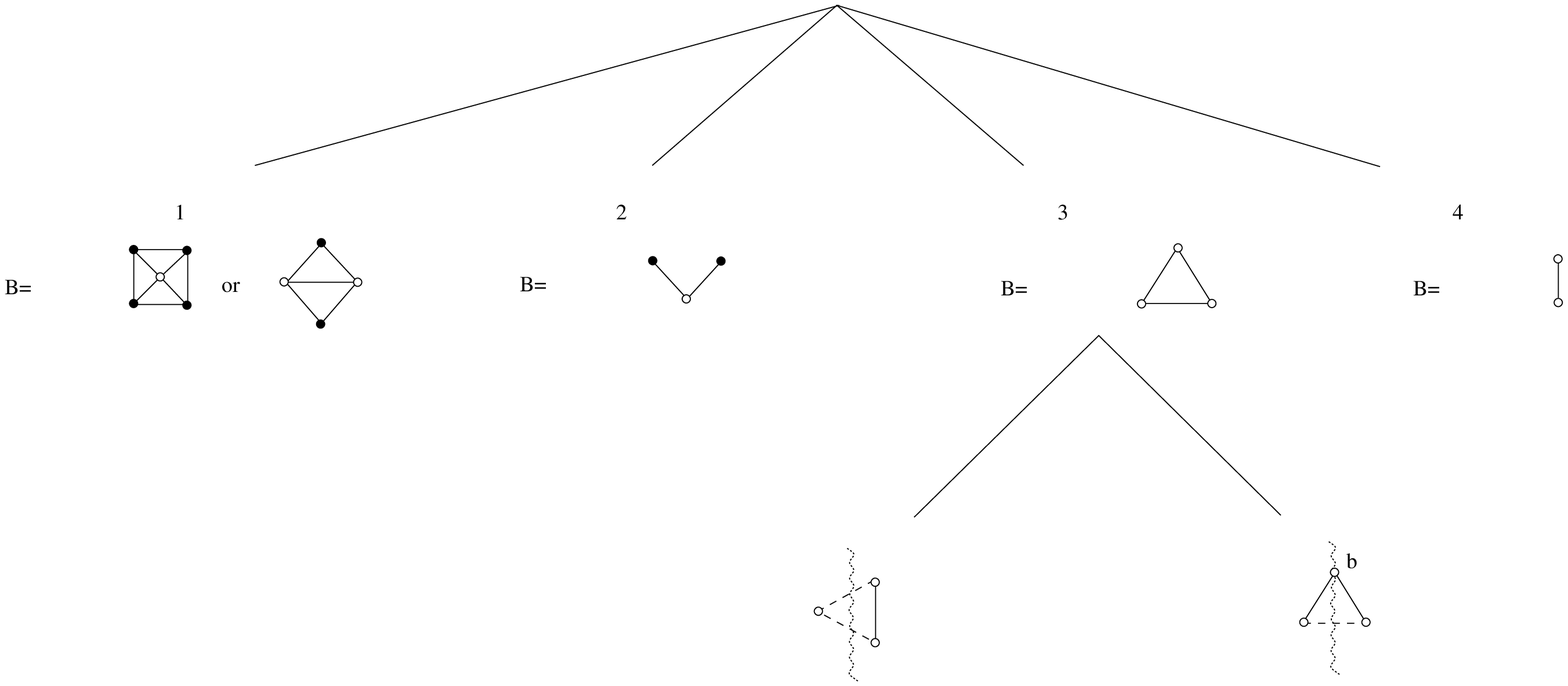,width=1.0\linewidth}
\end{center}
\end{table}

\smallskip
\noindent
{\bf Case 1:} $B\in\B_{\rm{IV}}$, or $B\in\B_{\rm{V}}$. \  At least one edge of $B$ having a dead end runs from $S_1$ to $S_2\setminus a_2$. By Remark~\ref{rmrk:DeadEndEdge} this edge appears in $S$ contradicting assumption $S_1\perp S_2$.

\smallskip
\noindent
{\bf Case 2:} $B\in\B_{\rm{III}}$. \  A unique outlet of $B$ cannot coincide with $b$ due to the way of choosing $a_1$ and $a_2$. Therefore, some edge of $B$ joins vertices of $S_1$ and $S_2\setminus a_2$, which is impossible since both edges of $B$ have dead ends.

\smallskip
\noindent
{\bf Case 3:} $B\in\B_{\rm{II}}$. \ Suppose first that $b$ is not a vertex of $B$. Then one vertex of $B$ (say $w$) belongs to one part of  $S\setminus a_2$ (i.e. in either $S_2\setminus a_2$ or $S_1$), and the remaining two ($w_1$ and $w_2$) to the other.
Since $S$ does not contain edges $(w,w_1)$ and $(w,w_2)$ these edges must cancel out with edges from other blocks of block decomposition. Since all these additional blocks contain $w$, the only way is to attach a block $B_1$ of second type along all the three outlets of $B$. This results in three vertices $w$, $w_1$
and $w_2$ unjoined with all other vertices of $S\setminus a_2$. In particular, $\l S_1,b\r$ is not connected (since $|S_1|\ge 2$), which implies that $S$ is not connected either.

Now suppose that $b$ is a vertex of $B$. Then we may assume that other vertices $w_1$ and $w_2$ of $B$ belong to  $S_1$ and $S_2\setminus a_2$ respectively. $S$ does not contain edge $(w_1,w_2)$. The only way to avoid it in $S$ is to glue an edge $(w_1,w_2)$ (which is a block $B_1$ of the first type) to $B$ (since blocks of all other types are already prohibited by Cases 1,2 and the past of this one). Then $w_1$ is a leaf of $\l S_1,b\r$ attached to $b$ in contradiction with the way of choosing of $a_2$.

\smallskip
\noindent
{\bf Case 4:} $B\in\B_{\rm{I}}$. \ Let $B=(w_1,w_2)$, $w_1\in S_1$ and $w_2\in S_2\setminus a_2$. The only way to avoid this edge in $S$ is to glue another edge $(w_1,w_2)$ (which is a block $B_1$ of the same type) to $B$ (all other blocks are
already prohibited by previous cases). Then $\l S_1,b\r$ is not connected, which implies that $S$ is not connected.

\medskip
Since all the four cases are done, we obtain that $S_1$ is block-decompos\-able with outlet $b$. Considering $S\setminus a_1$ instead of $S\setminus a_2$, in a similar way we conclude that $S_2$ is also block-decomposable with outlet $b$. Gluing these decompositions together along $b$ we obtain a block decomposition of $S$.

\end{proof}

\begin{remark}
In all the situations where we will apply Proposition~\ref{razval1} connectedness of $S$ and assumption $(1)$ will be stated in advance. Usually it is sufficient only to point out the vertex $b$ (in this case we say that $S$ is block-decomposable by Proposition~\ref{razval1} applied to $b$) and all the assumptions are evidently satisfied.
Only in the proofs of Lemma~\ref{no3} and Theorem~\ref{g8} assumption $(3)$ requires additional explanations.
\end{remark}

\begin{prop}
\label{razval2}

Let $S$ be a connected quiver $S=\l S_1,b_1,b_2,S_2\r$, where $S_1\perp S_2$, and $S$ has at least $8$ vertices. Suppose that

$(0)$ $b_1$ and $b_2$ are not joined in $S$;

$(1)$ for any $u\in S$ the quiver $S\setminus u$ is block-decomposable;

$(2)$ there exist $a_1\in S_1,a_2\in S_2$ such that

${}$\phantom{w}  $(2a)$ $S\setminus a_i$ is connected;

${}$\phantom{w}    $(2b)$ either $\l S_i,b_1,b_2\r\setminus a_i$ or $\l S_j,b_1,b_2\r$ (for $i,j=1,2,\ j\ne i$)  contains no leaves attached to $b_1$;

\qquad\quad similarly, either $\l S_i,b_1,b_2\r\setminus a_i$ or $\l S_j,b_1,b_2\r$ (for $j\ne i$) contains no leaves attached to $b_2$;


${}$\phantom{w}  $(2c)$ if $a_i$ is joined with $b_j$ (for $i,j=1,2$), then there is another vertex $w_i\in S_i$ attached to $b_j$.

\smallskip

\noindent
Then $S$ is block-decomposable.

\end{prop}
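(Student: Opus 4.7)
The plan is to mimic the proof of Proposition~\ref{razval1}, with the pair $\{b_1,b_2\}$ playing the role of the single separating vertex $b$. Concretely, I would first show that each ``half'' $\langle S_i,b_1,b_2\rangle$ is block-decomposable in such a way that both $b_1$ and $b_2$ are outlets of the decomposition, and then glue the two halves along $b_1$ and $b_2$ simultaneously. Since $b_1,b_2$ will lie in exactly one block of each half, the gluing is legal and evidently reproduces $S$.

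For the first half, start from any block decomposition of $S\setminus a_2$ (which exists by (1) and is connected by (2a)), and establish the key claim: no block $B$ of this decomposition has vertices both in $S_1$ and in $S_2\setminus a_2$. The proof is a case analysis on the type of $B$, parallel to Cases~1--4 in Proposition~\ref{razval1}, but with extra subcases according to whether $B$ contains none, one, or both of $b_1,b_2$. Blocks of type $\mathrm{IV}$ or $\mathrm{V}$ are ruled out by the dead-end edge argument of Remark~\ref{rmrk:DeadEndEdge} together with $S_1\perp S_2$ and hypothesis (0); type $\mathrm{III}$ is ruled out because the unique outlet cannot be either $b_1$ or $b_2$ by (2b), combined with a choice of $a_1,a_2$ at maximal distance from $\{b_1,b_2\}$ in analogy with Proposition~\ref{razval1}. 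For type $\mathrm{II}$ blocks one splits on the position of $b_1,b_2$: either an edge of $B$ survives in $S$ and contradicts $S_1\perp S_2$ or (0), or we are forced to glue a parallel type-$\mathrm{II}$ block, which by (2b) yields either a leaf at $b_1$ or $b_2$ on the wrong side or a disconnection of $S\setminus a_2$ contradicting (2a). Type $\mathrm{I}$ blocks are handled exactly as in Case~4 of Proposition~\ref{razval1}.

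Hypothesis (2c) is used at the step where, having ruled out crossing blocks, one must check that each $b_j$ remains a genuine outlet on the $S_i$-side---namely that $b_j$ is still contained in a block of the restricted decomposition that extends into $S_i\setminus a_i$---and the hypothesis $|S|\ge 8$ ensures that both halves are large enough for the disconnection arguments to go through. A symmetric argument with $a_1$ in place of $a_2$ gives a block decomposition of $\langle S_2,b_1,b_2\rangle$ with $b_1,b_2$ as outlets, and the two decompositions glue along $b_1$ and $b_2$ to produce the required block decomposition of~$S$.

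The main obstacle I expect is the enlarged case analysis: each block type must be examined in several subcases depending on where $b_1,b_2$ fall, and the ``parallel block'' situations in the type $\mathrm{I}$ and $\mathrm{II}$ cases become more delicate because $b_1$ or $b_2$ may themselves be the outlets along which cancellation occurs. Careful bookkeeping using (2b) and (2c) is what converts each bad configuration into a contradiction, and this is where the hypotheses have been precisely tuned.
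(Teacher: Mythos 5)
Your overall strategy is the same as the paper's: decompose $S\setminus a_2$, show no block crosses between $S_1$ and $S_2\setminus a_2$, do the symmetric thing with $a_1$, and glue along $b_1,b_2$. But there are two genuine gaps. The first is in the final gluing step, which you declare ``legal and evidently reproduces $S$.'' It is not evident: after the crossing-block claim, the blocks lying in $\langle S_i,b_1,b_2\rangle$ may form a block decomposition not of that subquiver but of that subquiver \emph{with an extra edge} $(b_1,b_2)$, which in $S\setminus a_2$ is cancelled by a block coming from the other side. Since $b_1$ and $b_2$ are not joined in $S$ (hypothesis $(0)$), you cannot simply glue the two halves; you must either cancel the phantom edges against each other or rule the configuration out. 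The paper spends the entire last third of its proof on exactly this situation, showing first that $|S_1|=2$ is forced, then analyzing the types of the two blocks carrying the edge $(b_1,b_2)$ and deriving a contradiction via a second decomposition of $S\setminus a_1$. Nothing in your sketch addresses this.

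The second gap is that the type $\mathrm{II}$ case does not reduce to ``a leaf on the wrong side or a disconnection.'' In the paper this case branches into a deep tree (Cases 3.1--3.2.4.3). One branch (Case 3.1) requires re-partitioning $S$ as $\langle S_1',b_1,b_2,S_2'\rangle$ with $S_1'=S_1\setminus w_1$ and re-verifying hypotheses $(2a)$--$(2c)$ for new choices of $a_1',a_2'$ --- a self-referential reduction, not a direct contradiction. Another branch (Case 3.1.2) is closed only by exhibiting a specific six-vertex subquiver $\langle r_1,t_1,b_1,w_1,w_2,a_2\rangle$ and checking (with Keller's applet~\cite{K}) that its mutation class is infinite, hence it is non-decomposable, contradicting hypothesis $(1)$; this is an argument of a different nature from the block-combinatorics you propose. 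A smaller point: you speak of choosing $a_1,a_2$ at maximal distance from $\{b_1,b_2\}$, but in this proposition $a_1,a_2$ are given by hypothesis $(2)$, not chosen (the freedom to choose them is what Corollary~\ref{after2} supplies separately). So the skeleton is right, but the parts of the proof that actually carry the difficulty are missing.
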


\begin{proof}
The plan is similar to the proof of Proposition~\ref{razval1}. The idea is to prove that each $S_i$ together with those of $b_1,b_2$ which are attached to $S_i$ is block-decomposable with outlets $b_1$ and $b_2$ (or just one of them if the second is not joined
with $S_i$). Then we combine together these block decompositions to obtain a decomposition of $S$.  First we show that for any block decomposition of  $S\setminus a_2$ any block $B$ is contained entirely either in $\l S_1,b_1,b_2\r$ or in $\l S_2,b_1,b_2\r\setminus a_2$. For this, we consider any block decomposition of $S\setminus a_2$, assuming that for a block $B$ both intersections $B\cap S_1$ and $B\cap (S_2\setminus a_2)$ are not empty. We consider all possible types of block $B$ (see Table~\ref{razval2_}) and obtain contradiction for each type.

\begin{table}[!h]
\begin{center}
\caption{To the proof of Proposition~\ref{razval2}}
\label{razval2_}
\psfrag{1}{\bf 1}
\psfrag{2}{\bf 2}
\psfrag{3}{\bf 3}
\psfrag{4}{\bf 4}
\psfrag{B=}{$B=$}
\psfrag{or}{or}
\psfrag{a2}{\tiny $a_2$}
\psfrag{b1}{\tiny $b_1$}
\psfrag{b2}{\tiny $b_2$}
\psfrag{S1}{\tiny $S_1$}
\psfrag{S2}{\tiny $S_2$}
\psfrag{w1}{\tiny $w_1$}
\psfrag{w2}{\tiny $w_2$}
\psfrag{t1}{\tiny $t_1$}
\psfrag{t2}{\tiny $t_2$}
\psfrag{S1=w1}{\scriptsize $S_1=\{w_1\}$}
\psfrag{perp}{\scriptsize {$B_1$ is the only block  containing $b_2$}}
\psfrag{neperp}{\scriptsize $\exists$ block $B_2\ne B_1: b_2\in B_2$}
\psfrag{3.1}{\small \bf 3.1}
\psfrag{3.2}{\small \bf 3.2}
\psfrag{3.1.}{\scriptsize $B_2\subset \langle b_2,S_1 \rangle$  }
\psfrag{3.2.}{\scriptsize $B_2\subset \langle b_2,S_2\setminus a_2 \rangle$  }
\psfrag{3.1.1}{{\scriptsize \bf 3.1.1}}
\psfrag{3.1.2}{{\scriptsize \bf 3.1.2}}
\psfrag{3.1.1.}{{\scriptsize $t_1$ is a leaf}}
\psfrag{3.1.2.}{{\scriptsize $t_1$ is not a leaf}}
\psfrag{B}{\tiny $B$}
\psfrag{B1}{\tiny $B_1$}
\psfrag{B2}{\tiny $B_2$}
\psfrag{Table}{\ \ \ \ \small{Table~\ref{razv2_32}}}
\epsfig{file=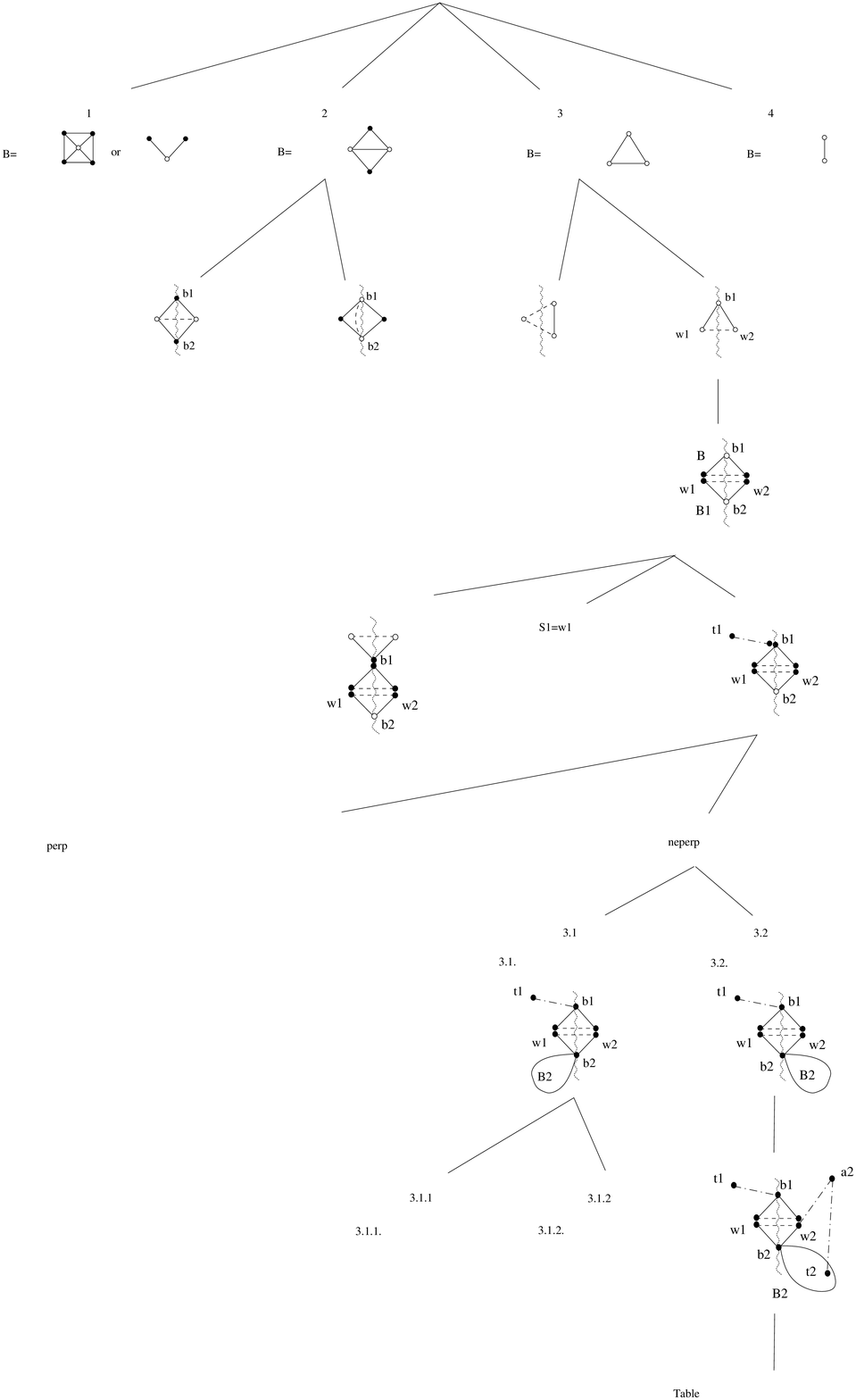,width=0.8\linewidth}
\end{center}
\end{table}

Notice that the assumption $(2c)$ implies that $|S_i|\ge 2$. We will refer this fact as assumption $(3)$.

\medskip
\noindent
{\bf Case 1:} $B\in\B_{\rm{V}}$, or $B\in\B_{\rm{III}}$. \ The proof is the same as in Proposition~\ref{razval1}.

\medskip
\noindent
{\bf Case 2:} $B\in\B_{\rm{IV}}$. \  Evidently, both $b_1,b_2\in B$. We may assume that both $b_1$ and $b_2$
are either dead ends of $B$, or outlets of $B$ (otherwise, by Remark~\ref{rmrk:DeadEndEdge} $S$ contains
simple edge $(b_1,b_2)$  contradicting assumption $(0)$).
First suppose that $b_1$ and $b_2$ are dead ends of $B$. Then we may assume that the
remaining vertices $w_1,w_2$ of $B$ lie in $S_1$ and $S_2\setminus a_2$ respectively. The edge $(w_1,w_2)$ of $B$
must be canceled out by an edge of another block $B_1$, otherwise $(w_1,w_2)$ appears in $S$ contradicting $S_1\perp S_2$.
Block $B_1$ cannot be of type ${\rm{IV}}$ (otherwise none of its vertices is $b_1$, so the block directly connects
$S_1$ and $S_2$). Therefore it is either of type ${\rm{II}}$ or ${\rm{I}}$.
If $B_1$ is of type ${\rm{II}}$ (i.e. a triangle with vertices $w_1,w_2$ and $w$, $w\ne b_i$),
then we note that edges $(w_1, w)$ and $(w,w_2)$ are not canceled out by other blocks and appear in $S$. Therefore
either $(w_1 w)$ or $(ww_2)$ connects $S_1$ to $S_2$, and, hence, contradicts the assumption $S_1\perp S_2$.
If we glue an edge $(w_1,w_2)$ as a block of the first type,
then $w_1$ is not joined with any other vertex of $S_1$. Since $b_1$
and $b_2$ are dead ends of $B$, they are not joined with any other vertex of $S_1$ either. Thus, due to
assumption $(3)$ the subquiver $\l S_1,b_1,b_2 \r$ is not connected, so $S$ is not connected either.

Now suppose that  $b_1$ and $b_2$ are outlets of $B$. Denote by $w_1\in S_1$ and
$w_2\in S_2\setminus a_2$ the other vertices of $B$. To avoid the edge $(b_1,b_2)$ in $S$,
some block should be glued along this edge. If we glue a block of first or forth
type, then we obtain a quiver with respectively $4$ and $6$ vertices without outlets.
Since $S$ has at least $8$ vertices the complement of this quiver in $S\setminus a_2$ is nonempty
and $S\setminus a_2$ is not connected
contradicting the choice of $a_2$. If we glue a block of the second type (a
triangle $b_1b_2w$), then we obtain a quiver with $5$ vertices, the only outlet is $w$.
Therefore, all the remaining vertices of $S\setminus a_2$ are not joined with vertices of $B$.
In particular, $w\in S_1$ (otherwise $S_1$ consists of $w_1$ only), so
$S_2$ consists of $w_2$ and $a_2$. Hence, $S$ is block-decomposable by Proposition~\ref{razval1} applied to $w$.

\medskip
\noindent
{\bf Case 3:} $B\in\B_{\rm{II}}$. \ We may assume that vertices of $B$ are $b_1$, $w_1\in S_1$ and
$w_2\in S_2\setminus a_2$.
Since edge $(w_1,w_2)$ is not in $S$ it must be canceled by a block $B_1$. It is either of type $\rm{I}$ or $\rm{II}$ (since all other types are already excluded above). $B_1$ is not of the first type, otherwise $w_1$ and $w_2$ are
leaves in $S_1$ and $S_2$, resp., attached to $b_1$, contradicting (2b).
Therefore, $B_1$ is of type $\rm{II}$, and the remaining vertex of $B_1$ is either $b_1$ or $b_2$.
If it is $b_1$ then $S$ is not connected.
We conclude that $b_2\in B_1$.

Any other block $B_1'$ with vertex $b_1$ is contained in either $\l S_1,b_1\r$ or $\l b_1,S_2\setminus a_2\r$
(otherwise, if $B_1'$ containing $b_1$ has nonempty intersection with both $S_1$ and $S_2\setminus a_2$,
then $B_1'$ is again of type ${\rm{II}}$. As above there exists $B_2'$ containing $b_2$ completing $B_1'$
in such a way that $\l B, B_1, B_1', B_2'\r$ form a six vertex subquiver without outlets. Since
$|S|\ge 8$, this implies that $S\setminus a_2$ is not connected.)
Similarly, any block with vertex $b_2$ (other than $B_1$) is contained
either in $\l b_1,b_2,S_1\r$ or in $\l b_1,b_2,S_2\setminus a_2\r$.
Further, no vertex of $S\setminus a_2$ except $b_1,b_2$ is joined with $w_1$ and $w_2$.

Therefore, $S\setminus a_2$ consists of $\l b_1,b_2,w_1,w_2\r$, blocks attached to $b_1$ and $b_2$,
and vertices not joined with $\l b_1,b_2,w_1,w_2\r$.
Combining that with assumption $(3)$, we see that there is at least one vertex $t_1\in S_1$
distinct from $w_1$ which is joined with at least one of $b_1$ and $b_2$ by a simple edge.
We may assume that  $t_1$ is attached to $b_1$.
Since $b_1$ is contained in two blocks of $S\setminus a_2$, no vertex of $S_2$ except possibly $a_2$
is joined with $b_1$.

Suppose that no block except $B_1$ contains $b_2$.
Then the subquiver $S\setminus \l b_1,b_2,w_1,w_2,a_2\r$ belongs to $S_1$
and is joined with $b_1$ only. Applying  Proposition~\ref{razval1} to $b_1$ we conclude that $S$ is block-decomposable.

Let us consider the case when some block $B_2$ is attached to $b_2$.
As we have already shown, $B_2$ is entirely contained either in  $\l S_1,b_1,b_2\r$ or in $\l b_1,b_2,S_2\setminus a_2\r$.

\smallskip


\noindent
{\bf Case 3.1:} $B_2$ is contained in $\l S_1,b_2\r$. \
In this case $S_2$ consists of $w_2$ and $a_2$ only,
so $|S_2|=2$. Recall that $w_1$ is joined with $b_1$ and $b_2$ only,
and define new decomposition of $S=\l S'_1,b_1,b_2,S'_2\r$,
where $S'_1=S_1\setminus w_1$, and $S'_2=\l S_2,w_1\r$.
We will show that this decomposition satisfies all the assumptions
of Proposition~\ref{razval2}. Since $|S_2|=2$ and $|S|\ge 8$, this decomposition satisfies $|S'_1|,|S'_2|\ge 3$. Therefore we may avoid Case 3.1.

Clearly, assumptions $(0)$ and $(1)$ hold. We need to choose vertices $a_1'$ and $a_2'$
satisfying conditions $(2a)$--$(2c)$. We keep $a_2'=a_2$.
Recall that $t_1\in S'_1$ is attached to $b_1$, and some vertex of $B_2$ (say $t_2$)
is attached to $b_2$.  So, if $a_2$ is not a leaf of $S$ attached to exactly one of $b_1$ and $b_2$
we may choose as $a_1'$ any vertex  of $S'_1$ different from both $t_1$ and $t_2$
and satisfying condition $(2a)$, see Fig.~\ref{razv2_31}.

\begin{figure}[!h]
\begin{center}
\psfrag{w2}{\tiny $w_2$}
\psfrag{w1}{\tiny $w_1$}
\psfrag{b1}{\tiny $b_1$}
\psfrag{b2}{\tiny $b_2$}
\psfrag{t1}{\tiny $t_1$}
\psfrag{t2}{\tiny $t_2$}
\psfrag{a2}{\tiny $a_2$}
\psfrag{S1}{\small $S_1$}
\psfrag{S2}{\small $S_2$}
\psfrag{S1'}{\small $S_1'$}
\psfrag{S2'}{\small $S_2'$}
\epsfig{file=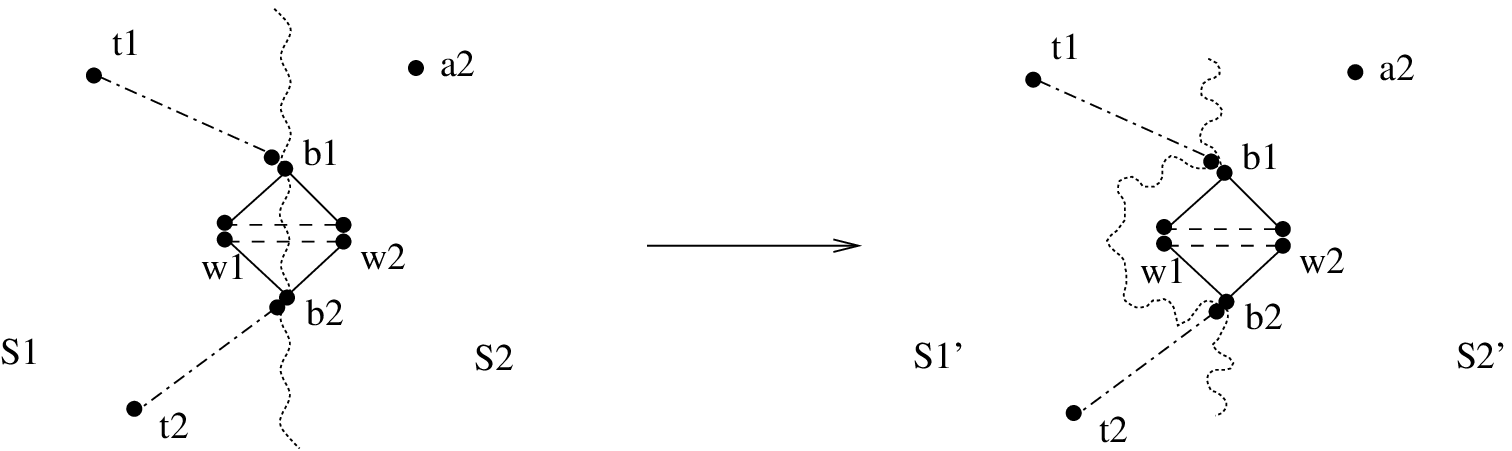,width=0.9\linewidth}
\caption{To the proof of Proposition~\ref{razval2}, Case 3.1}
\label{razv2_31}
\end{center}
\end{figure}

Suppose that  $a_2$ is a leaf of $S$ attached to exactly one of $b_1$ and $b_2$,
say to $b_1$. Notice that there exists at most one leaf of $S$ in $S'_1$ attached
to $b_1$, otherwise assumption $(2b)$ does not hold for the initial decomposition of $S$.
We may assume that if there is a leaf of $S$ in $S'_1$ attached to $b_1$, then it is $t_1$.
Consider the following two cases.

\smallskip
\noindent
{\bf Case 3.1.1:} $t_1$ is a leaf of $S$. \ If there exists another vertex of $S'_1$ attached to $b_1$, then $a_1'=t_1$ satisfies
all the assumptions. So, $t_1$ is the only vertex of $S'_1$ attached to $b_1$. 
Further, as we have seen above, $S\setminus\l t_1,a_2\r$ is block-decomposable with outlet $b_1$,
vertices $t_1$ and $a_2$ are joined with $b_1$ only. Therefore, depending on the orientation of edges $\l b_1,t_1\r$ and $\l b_1,a_2\r$,
we may glue either a block of the third type composed by $b_1,t_1$ and $a_2$,
or a composition of a second type block with an extra edge $(t_1,a_2)$ in appropriate direction, which implies that $S$ is block-decomposable.

\smallskip

\noindent
{\bf Case 3.1.2:} $t_1$ is not a leaf of $S$. \
Denote by $r_1$ any vertex of $S'_1$ attached to $t_1$,
and consider the following quiver $S'\!=\!\l r_1,t_1,b_1,w_1,w_2,a_2\r$.
As a proper subquiver of $S$, $S'$ should be block-decomposable.
However, using the {\rm Java} applet~\cite{K} by Keller one can easily check that
the mutation class of $S'$ is infinite, so $S'$ is non decomposable.

\begin{table}[!h]
\begin{center}
\caption{To the proof of Proposition~\ref{razval2}, Case 3.2}
\label{razv2_32}
\psfrag{1}{\small \bf 3.2.1}
\psfrag{2}{\small \bf 3.2.2}
\psfrag{3}{\small \bf 3.2.3}
\psfrag{4}{\small \bf 3.2.4}
\psfrag{B3}{\tiny $B_3$}
\psfrag{B4}{\tiny $B_4$}
\psfrag{w2}{\tiny $w_2$}
\psfrag{u2}{\tiny $u_2$}
\psfrag{b1}{\tiny $b_1$}
\psfrag{b2}{\tiny $b_2$}
\psfrag{only}{{\scriptsize $B_3$ is the only block}}
\psfrag{containing}{\scriptsize containing $w_2$}
\psfrag{3.2.2.1}{{\scriptsize \bf 3.2.2.1}}
\psfrag{3.2.2.2}{{\scriptsize \bf 3.2.2.2}}
\psfrag{3.2.4.1}{{\scriptsize \bf 3.2.4.1}}
\psfrag{3.2.4.2}{{\scriptsize \bf 3.2.4.2}}
\psfrag{3.2.4.3}{{\scriptsize \bf 3.2.4.3}}
\psfrag{B=}{\scriptsize $B_4=$}
\epsfig{file=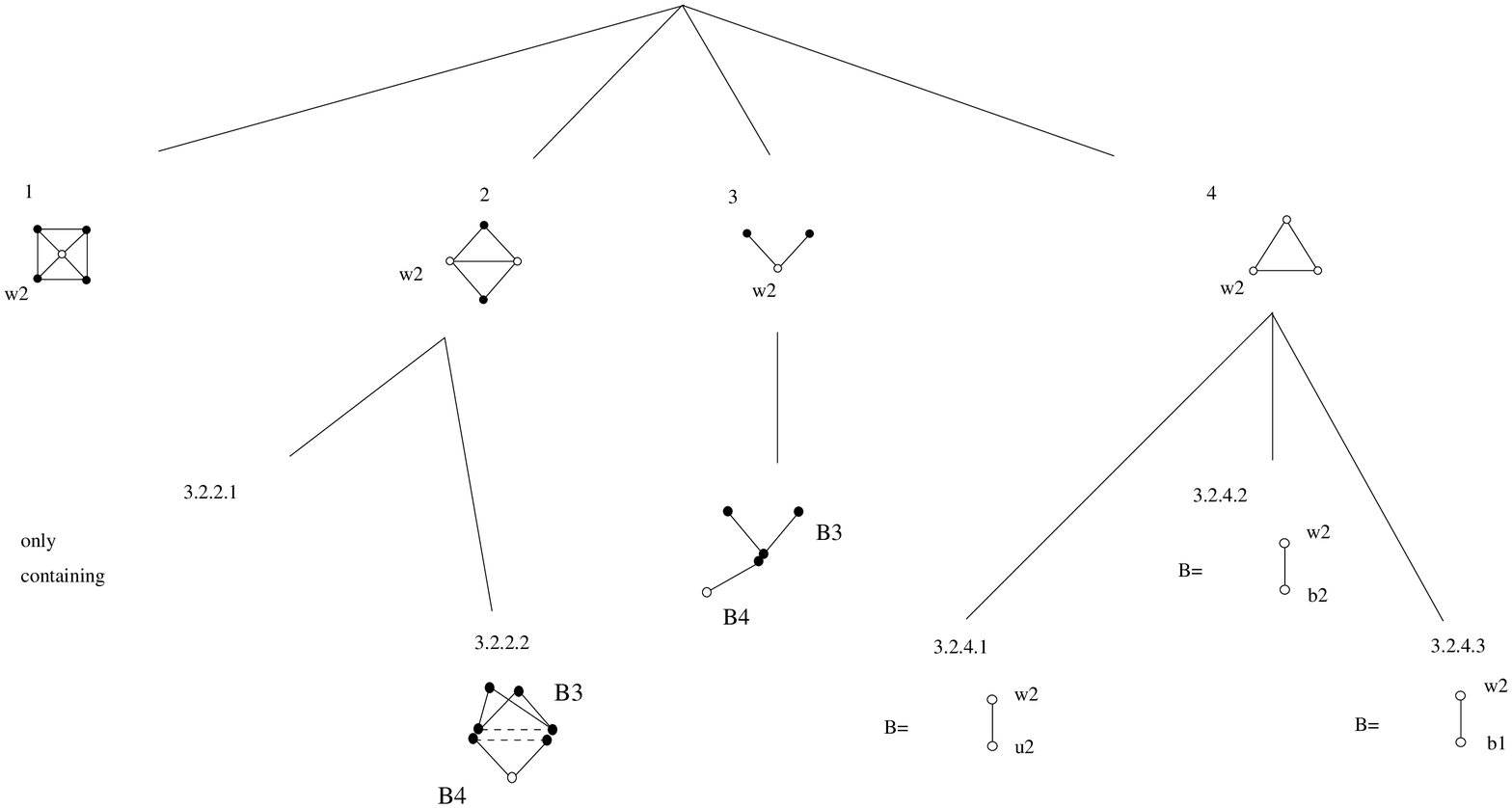,width=1.0\linewidth}
\end{center}
\end{table}

\smallskip
\noindent
{\bf Case 3.2:} $B_2$ is contained in $\l b_2,S_2\setminus a_2\r$. \
If $t_1$ is not a leaf of $S$ then applying Proposition~\ref{razval1} to $b_1$ we see that
$S$ is block-decomposable. Thus, we may assume that  $t_1$ is a leaf of $S$.
In this case $S_1$ consists of $w_1$ and $t_1$ only, so $|S_1|=2$.
If $a_2$ is joined with neither $b_1$ nor $w_2$, then $S$ is block-decomposable by
Proposition~\ref{razval1} applied to $b_2$.
By the same reason, $a_2$ is joined with some vertex $t_2\in S\setminus\l t_1,b_1,w_1,w_2,b_2\r$.
If $a_2$ is not joined with $w_2$, then switching $S_1$ and $S_2$ leads us back to Case~3.1.
Thus, we may assume that $a_2$ is attached to $w_2$ by a simple edge.

Now take any block decomposition of $S\setminus t_1$ and consider all possible types
of blocks containing vertex $w_2$ (see Table~\ref{razv2_32}).
Recall that the valence ${\rm Val}_{S\setminus t_1}(w_2)=3$.

\smallskip
\noindent
{\bf Case 3.2.1:} $w_2$ lies in block $B_3$ of type ${\rm{V}}$. \
In this case $w_2$ is a dead end of $B_3$ (due to its valence), one of $b_1$
and $b_2$ is a dead end of $B_3$, and another one is an outlet
(since the orientations of edges $\l w_2,b_1\r$ and $\l w_2,b_2\r$ differ, see Fig.~\ref{razv2_321}).
Then the edge $(b_1,b_2)$ in $B_3$ is not canceled out by any other edge. Therefore, $b_1$ and $b_2$ are joined in $S$
contradicting assumption $(0)$.

\begin{figure}[!h]
\begin{center}
\psfrag{b1}{\tiny $b_1$}
\psfrag{b2}{\tiny $b_2$}
\psfrag{w1}{\tiny $w_1$}
\psfrag{w2}{\tiny $w_2$}
\psfrag{or}{\small or}
\epsfig{file=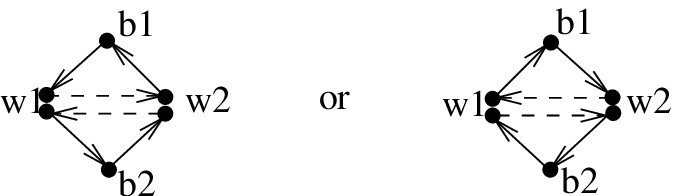,width=0.55\linewidth}
\caption{To the proof of Proposition~\ref{razval2}, Case 3.2.1. Orientations of the edges $\l w_2,b_1\r$ and $\l w_2,b_2\r$ are different.}
\label{razv2_321}
\end{center}
\end{figure}

\smallskip
\noindent
{\bf Case 3.2.2:} $w_2$ is contained in block $B_3$ of type ${\rm{IV}}$. \
In this case $w_2$ is an outlet of $B_3$ (since ${\rm Val}_{S\setminus t_1}(w_2)=3$). Consider two cases.

\smallskip
\noindent
{\bf Case 3.2.2.1:} $w_2$ is contained in block $B_3$ only. \
Then one of $b_1$ and $b_2$ is a dead end of $B_3$, and
another one is an outlet (since the orientations of edges $\l w_2,b_1\r$ and $\l w_2,b_2\r$ are different).
Therefore, $b_1$ is joined with $b_2$, which contradicts  assumption $(0)$.

\smallskip
\noindent
{\bf Case 3.2.2.2:} $w_2$ is contained simultaneously in two blocks $B_3$ and $B_4$, $B_4\ne B_3$. \
Since ${\rm Val}_{S\setminus t_1}(w_2)=3$, $B_4$ is of second type.
Again, the orientations of edges $\l w_2,b_1\r$ and $\l w_2,b_2\r$ are different,
so $a_2$ is a dead end of $B_3$. This implies that valence of $a_2$ is $2$,
so only $t_2$ can be outlet of $B_3$. The second dead end of $B_3$ should be joined
with both $t_2$ and $w_2$. Since $b_1$ is not joined with $t_2$, $b_2$ is a dead end of $B_3$.
But this contradicts existence of the edge joining $b_2$ and $w_1$.

\smallskip
\noindent
{\bf Case 3.2.3:} $w_2$ is contained in block $B_3$ of type ${\rm{III}}$. \
Since ${\rm Val}_{S\setminus t_1}(w_2)=3$, vertex $w_2$ is the outlet of $B_3$. By the same reason at least
one of $b_1$ and $b_2$ is a dead end of $B_3$, hence a leaf of $S\setminus t_1$. But neither $b_1$ nor $b_2$
is a leaf and such $B_3$ does not exist.

\smallskip
\noindent
{\bf Case 3.2.4:} $w_2$ is contained in block $B_3$ of type ${\rm{II}}$. \
Recall that ${\rm Val}_{S\setminus t_1}(w_2)=3$, so in this case $w_2$ is also contained in block $B_4$ of first type.
There are exactly three ways to place vertices $w_2,b_1,b_2,a_2$ into two blocks.

\smallskip
\noindent
{\bf Case 3.2.4.1:} $B_4=(w_2,a_2)$. \
Then $B_3$ has an edge $(b_1,b_2)$ which must cancel out in $S\setminus t_1$.
Since $b_1$ is joined with $w_1$, the only way to cancel out edge $(b_1,b_2)$ in $S\setminus t_1$
is to attach along this edge a second type block $B_5=(b_1,b_2,w_1)$.
Then $b_2\in B_3\cap B_5$, so it must be disjoint from $B_2$. Hence $B_2=\emptyset$.

\smallskip
\noindent
{\bf Case 3.2.4.2:} $B_4=(w_2,b_2)$. \ Then $B_3=(w_2,b_1,a_2)$.
Since $w_1$ is not joined with $a_2$ and ${\rm Val}_{S\setminus t_1}(b_1)\le 3$,
vertices $b_1$ and $w_1$ compose a block $B_5$ of first type.
Since ${\rm Val}_{S\setminus t_1}(w_1)=2$,
vertices $w_1$ and $b_2$ also compose a block $B_6$ of first type.
Again, this implies that $b_2\in B_3\cap B_6$, so $B_2=\emptyset$.

\smallskip
\noindent
{\bf Case 3.2.4.3:} $B_4=(w_2,b_1)$. \ Then $B_3=(w_2,b_2,a_2)$.
The proof is similar to the previous case. Since $w_1$ is not joined with $a_2$, and
${\rm Val}_{S\setminus t_1}(b_1)\le 3$, vertices $b_1$ and $w_1$ compose a block $B_5$
of first type. Since ${\rm Val}_{S\setminus t_1}(w_1)=2$,
vertices $w_1$ and $b_2$ also compose a block $B_6$ of first type.
This implies that $b_2\in B_3\cap B_6$, so again $B_2=\emptyset$.

\medskip
\noindent
{\bf Case 4:} $B\in\B_{\rm{I}}$. \ The proof is the same as in Proposition~\ref{razval1}.

\medskip

We call the connected component of $\l S_i,b_1,b_2\r$ containing $S_i$ \emph{the closure} of $S_i$ and denote it by $\widetilde S_i$.
We proved above that any block in the decomposition of $S\setminus a_2$ is entirely contained in exactly
one of $\widetilde S_1$ and $\widetilde S_2$.
Consider the union of all the blocks with vertices from $\widetilde S_1$ only.
They form a block decomposition either of $\widetilde S_1$,
or of  $\widetilde S_1\cup (b_1,b_2)$, i.e. $\widetilde S_1$ with edge $(b_1,b_2)$.
Due to assumption $(2c)$, in both cases vertices $b_1$ and $b_2$ are outlets.
Similarly, considering a block decomposition of $S\setminus a_1$,
we obtain a block decomposition either of $\widetilde S_2$, or of $\widetilde S_2\cup (b_1,b_2)$
where both $b_1$ and $b_2$ are outlets.

Suppose that in the way described above we got block decompositions
of $\widetilde S_1$ and $\widetilde S_2$.
Then we can glue these decompositions
to obtain a block decomposition of $S$.
Now we will prove that in all other cases $S$ is also block-decomposable.

Now suppose that for one of $\widetilde S_1$ and $\widetilde S_2$ (say $\widetilde S_1$)
we got a block decomposition of $\widetilde S_1$ with an edge $(b_1,b_2)$.
Consider the corresponding decomposition of $S\setminus a_2$.
Clearly, there is a block $B_1$ in the decomposition of $\widetilde S_1$ with an edge $(b_1,b_2)$
containing both $b_1$ and $b_2$ as outlets.
Since  $b_1$ and $b_2$ are not joined in $S$, there exists a block $B_2$
with vertices from $\widetilde S_2$ containing the edge $(b_1,b_2)$,
again both $b_1,b_2$ are outlets. Notice that $B_1$ and $B_2$ are blocks
of second or fourth type (block of third or fifth type has one outlet only;
if $B_i$ is a block of first type then no vertex of $S_i\setminus a_2$
can be attached to $b_j$, so $|S_i|\le 1$).

First, we prove that if $S$ is not block-decomposable then $|S_1|=2$.
Indeed, no vertex of $S_1$ except vertices of $B_1$ can be attached to $b_1$ or $b_2$.
If $B_1$ is of fourth type, then both its vertices belonging to $S_1$ are dead ends,
so $|S_1|=2$. If $B_1$ is of second type with third vertex $v_1$,
then $S$ is block-decomposable by Proposition~\ref{razval1} applied to $v_1$ unless $|S_1|=2$.

Finally, we look at the type of $B_2$.  Again, no vertex of $S_2\setminus a_2$ except vertices of $B_2$
can be attached to $b_1$ or $b_2$.  If $B_2$ is of fourth type, we see that  $|S_2|\le 3$,
so $|S|<8$, which contradicts assumptions of the proposition.
Therefore, we may assume that $B_2$ is of second type with third vertex $v_2$.
In particular, $v_2$ is the only vertex of $S_2\setminus a_2$ joined with $b_1$ and $b_2$.
We will prove that $S$ is block-decomposable.

If $a_2$ is joined with neither $b_1$ nor $b_2$, then $S$ is block-decomposable by
Proposition~\ref{razval1} applied to $v_2$.
So, we may assume that $a_2$ is joined with one of $b_1$ and $b_2$, say $b_1$.
If $a_2$ is joined with no vertex of $S_2\setminus v_2$,
then again $S$ is block-decomposable by  Proposition~\ref{razval1} applied to $v_2$.
Hence, there is $t_2\in S_2\setminus v_2$ attached to $a_2$.
Further, since $|S_1|=2$ and $|S|\ge 8$, there exists a vertex $u_2\in S_2\setminus\l v_2,a_2\r$
joined with $v_2$ (see Fig.~\ref{razv2_e1}), otherwise $S$ is block-decomposable by Proposition~\ref{razval1} applied to $a_2$.

\begin{figure}[!h]
\begin{center}
\psfrag{v2}{\tiny $v_2$}
\psfrag{v1}{\tiny $v_1$}
\psfrag{t2}{\tiny $t_2$}
\psfrag{t2=}{\tiny $t_2=u_2$}
\psfrag{b1}{\tiny $b_1$}
\psfrag{b2}{\tiny $b_2$}
\psfrag{a1}{\tiny $a_1$}
\psfrag{a2}{\tiny $a_2$}
\psfrag{u1}{\tiny $u_1$}
\psfrag{u2}{\tiny $u_2$}
\psfrag{or}{\small or}
\epsfig{file=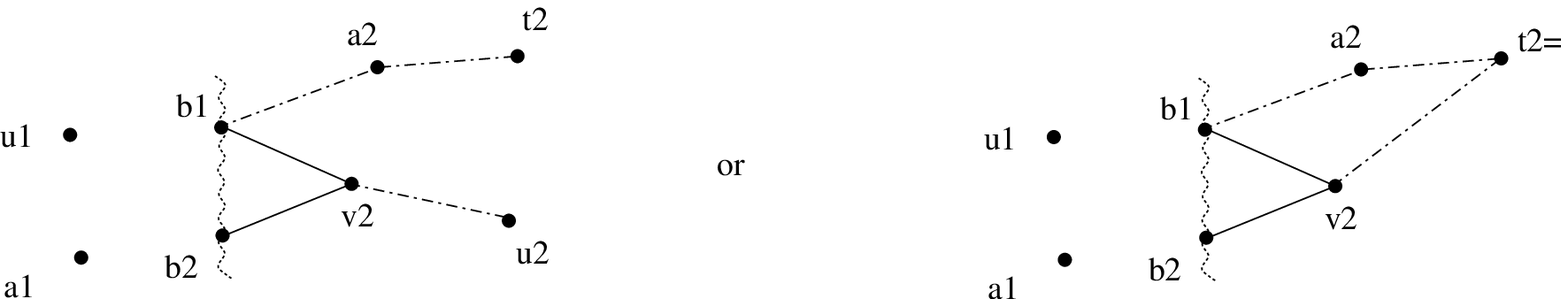,width=0.9\linewidth}
\caption{To the proof of Proposition~\ref{razval2}.}
\label{razv2_e1}
\end{center}
\end{figure}

Take $a_1\in S_1$, and consider a block decomposition of $S\setminus a_1$.
Denote by $u_1$ the remaining vertex of $S_1$, and consider all blocks containing $b_1$.
Since ${\rm Val}_S(b_1)\le 4$ and no block contains vertices from $S_1$ and $S_2$ simultaneously,
$b_1$ does not belong to a block of fifth type.
Since $a_2$ and $v_2$ are not leaves,  $b_1$ does not belong to a block of third type.
Moreover, $b_1$ does not belong to a block of fourth type:
in this case  $a_2$ and $v_2$ are dead ends contradicting existence of $u_2$.

Block $(u_1,b_1,b_2)$ can not exist, otherwise $b_1$ enters three blocks simultaneously.
Therefore, $(b_1,u_1)$ is a block of first type while
$b_1,a_2,v_2$ compose a block of second type (since $S\setminus a_1$ is connected).
Notice that $v_2$ is the only vertex of $S_2\setminus a_2$ joined
with $b_1$ and $b_2$, which implies that either $(b_2,v_2)$ is a block of first type, or $(b_2,v_2,a_2)$ is a block of second type
(see Fig.~\ref{razv2_e2}).
In both cases $v_2$ is contained in two blocks,
so it cannot be attached to $u_2$. This contradiction completes the proof of the Proposition.

\begin{figure}[!h]
\begin{center}
\psfrag{v2}{\tiny $v_2$}
\psfrag{v1}{\tiny $v_1$}
\psfrag{t2}{\tiny $t_2$}
\psfrag{b1}{\tiny $b_1$}
\psfrag{b2}{\tiny $b_2$}
\psfrag{a1}{\tiny $a_1$}
\psfrag{a2}{\tiny $a_2$}
\psfrag{u1}{\tiny $u_1$}
\psfrag{u2}{\tiny $u_2$}
\psfrag{or}{\small or}
\epsfig{file=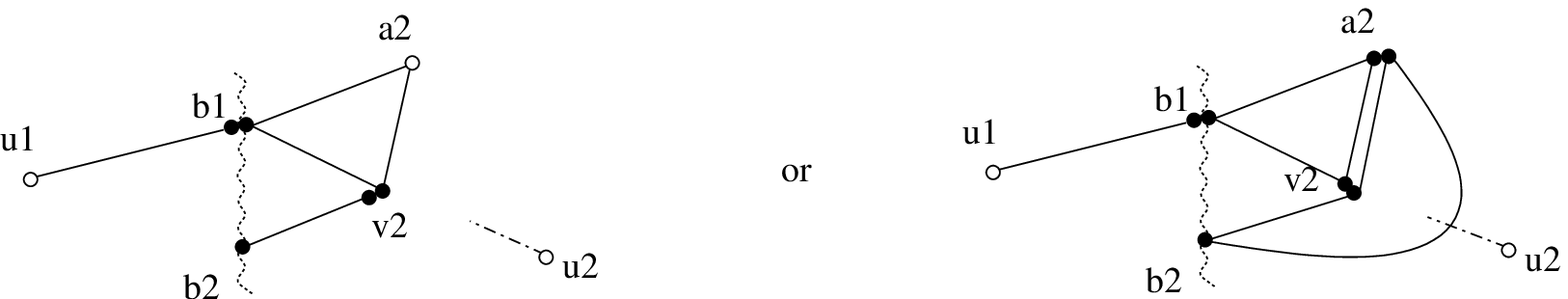,width=0.95\linewidth}
\caption{To the proof of Proposition~\ref{razval2},
$v_2$ belongs to two blocks and cannot be joined with $u_2$.}
\label{razv2_e2}
\end{center}
\end{figure}

\end{proof}

\begin{cor}
\label{after2}
Suppose that $S=\l S_1,b_1,b_2,S_2\r$ satisfies all the assumptions of Proposition~\ref{razval2} except $(2)$. Suppose also that $|S_1|\ge 2,|S_2|\ge 3$, and there exists $c_1\in S_1$ such that the following holds:

$(a)$ $S_1\setminus c_1$ is connected;

$(b)$ $S_1$ contains no leaves of $S$ attached to  $b_1$ or $b_2$,  and $S_1\setminus c_1$ contains no leaves of $S\setminus c_1$ attached to  $b_1$ or $b_2$;

$(c)$ $S_1\setminus c_1$ is attached to both $b_1$ and $b_2$.

Then $S$ is block-decomposable.

\end{cor}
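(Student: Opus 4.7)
The plan is to apply Proposition~\ref{razval2} with $a_1=c_1$ and with $a_2\in S_2$ to be chosen below. The ambient hypotheses $(0)$, $(1)$, $S_1\perp S_2$, connectedness of $S$ and $|S|\ge 8$ are given, so only condition $(2)$ of Proposition~\ref{razval2} needs to be verified.

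For the choice $a_1=c_1$, condition $(2a)$ follows from $(a)$ and $(c)$, since $S_1\setminus c_1$ is connected and attached to both $b_1$ and $b_2$, while $S_2$ is attached to $\{b_1,b_2\}$ as $S$ is connected and $S_1\perp S_2$. Condition $(2c)$ for $i=1$ is exactly $(c)$. The $c_1$-side of $(2b)$ follows from $(b)$: any leaf of $\l S_1,b_1,b_2\r\setminus c_1$ attached to $b_j$ lies in $S_1\setminus c_1$ and, by $S_1\perp S_2$, has the same neighbours in $S\setminus c_1$, so it would be a leaf of $S\setminus c_1$ attached to $b_j$, which $(b)$ excludes. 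The identical argument (with $S$ in place of $S\setminus c_1$) shows that the $a_2$-side clause of $(2b)$ holds for any $a_2\in S_2$.

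The remaining task, which is the main point of the proof, is to exhibit $a_2\in S_2$ satisfying $(2a)$ and $(2c)$. Call $v\in S_2$ \emph{critical} if $v$ is the unique neighbour of some $b_j$ in $S_2$; then at most two vertices of $S_2$ are critical. Set $G=\l S_2,b_1,b_2\r$; since $S_1\perp S_2$ and $S$ is connected, every connected component of $G$ contains $b_1$ or $b_2$. Take a BFS spanning forest $F$ of $G$ rooted at $\{b_1,b_2\}$. My plan is to let $a_2$ be a leaf of $F$ lying in $S_2$ which is not critical. For such a vertex, $F\setminus a_2$ still spans, so in $G\setminus a_2$ every vertex of $S_2\setminus a_2$ remains joined to $\{b_1,b_2\}$; combined with the already-established connectedness of $S_1\cup\{b_1,b_2\}$, this gives $(2a)$, while non-criticality gives $(2c)$.

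The existence of such $a_2$ is where the hypothesis $|S_2|\ge 3$ is used, and is the only real obstacle. If some leaf of $F$ lies at BFS-depth $\ge 2$, it is a vertex of $S_2$ not adjacent to $\{b_1,b_2\}$ in $G$ and hence automatically non-critical. Otherwise every vertex of $S_2$ sits at BFS-depth $1$ and is itself a leaf of $F$; at most two such leaves are critical, so $|S_2|\ge 3$ supplies a non-critical one. With $a_1=c_1$ and this $a_2$, Proposition~\ref{razval2} applies and yields block-decomposability of $S$.
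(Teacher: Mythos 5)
Your proposal is correct and takes essentially the same route as the paper's own proof: set $a_1=c_1$, deduce $(2a)$--$(2c)$ for it from $(a)$--$(c)$, and then choose $a_2$ as a peripheral vertex of $S_2$ that is not the unique neighbour of $b_1$ or $b_2$ in $S_2$ (the paper picks a vertex at maximal distance from $b_1$, avoiding two designated attaching vertices; your non-critical BFS-leaf is the same idea, if anything spelled out more carefully). The use of $|S_2|\ge 3$ to guarantee such a vertex exists matches the paper's argument.
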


\begin{proof}
We will show how to choose $a_1$ and $a_2$ to fit into assumption $(2)$ of  Proposition~\ref{razval2}.

As $a_1$ we can always take $c_1$. Clearly, assumptions $(a)$--$(c)$ imply corresponding assumptions $(2a)$--$(2c)$ of the Proposition~\ref{razval2} for $a_1$.

To choose $a_2$, we look how $S_2$ is attached to $b_1$ and $b_2$. If either $S_2$ is not attached to one of them (say $b_2$) or there is a vertex $v_2\in S_2$ joined with both $b_1$ and $b_2$, then we take as $a_2$ any vertex of $S_2$ being at the maximal distance from $b_1$; otherwise, we fix two vertices $v_1, v_2\in S_2$ joined with $b_1$ and $b_2$ respectively, and take as $a_2$ any vertex of $S_2\setminus\l v_1,v_2\r$ being at the maximal distance from $b_1$.

\end{proof}

\bigskip

\section{Minimal non-decomposable quivers}
\label{minimal}

Our aim is to prove that any non-decomposable quiver contains a subquiver of relatively small order which is non-decomposable either.

\begin{definition}
\label{defmin}
A \emph{minimal non-decomposable quiver} $S$ is a quiver that
\begin{itemize}
\item is non-decomposable;
\item for any $u\in S$ the quiver $S\setminus u$ is block-decomposable.
\end{itemize}
\end{definition}
Notice that a minimal non-decomposable quiver is connected. Indeed, if $S$ is non-connected and non-decomposable, then at least one connected component of $S$ is non-decomposable either.

\begin{theorem}
\label{g8}

Any minimal non-decomposable quiver contains at most $7$ vertices.

\end{theorem}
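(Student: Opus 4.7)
The plan is to argue by contradiction: assume $S$ is a minimal non-decomposable quiver with $n=|S|\ge 8$, and show that $S$ is in fact block-decomposable. By Definition~\ref{defmin}, the hypothesis ``$S\setminus u$ is block-decomposable for every $u\in S$'' is built in, so assumption $(1)$ of both Propositions~\ref{razval1} and~\ref{razval2} is automatic. The task is therefore to locate a cut vertex $b$ satisfying the remaining hypotheses of Proposition~\ref{razval1} or an orthogonal split $S=\langle S_1,b_1,b_2,S_2\rangle$ satisfying those of Proposition~\ref{razval2} (or Corollary~\ref{after2}); either would force $S$ to be block-decomposable, contradicting minimality.

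The first step is to pick any vertex $u\in S$ and fix a block decomposition $\mathcal D$ of $S\setminus u$. Since $|S\setminus u|\ge 7$, the decomposition $\mathcal D$ contains several blocks, and the vertices of $S\setminus u$ adjacent to $u$ lie in a bounded number of them. I would call the union of these blocks the \emph{near part} $N(u)$ and its complement in $S\setminus u$ the \emph{far part} $F(u)$; because each block has at most five vertices and $u$ meets only a bounded number of blocks, for an appropriately chosen $u$ one has $|F(u)|\ge 2$. Crucially, $F(u)$ is connected to $N(u)\cup\{u\}$ only through outlets of $\mathcal D$, since no block of $\mathcal D$ can straddle the interface (otherwise it would contribute an edge incident to $u$).

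Next I would analyse how $F(u)$ is attached to the rest. If a single outlet $b$ provides the attachment, then $b$ is a cut vertex of $S$: one component of $S\setminus b$ contains $F(u)\setminus b$ and the other contains $\{u\}\cup(N(u)\setminus b)$. For $n\ge 8$ the sizes automatically satisfy hypotheses $(2)$ and $(3)$ of Proposition~\ref{razval1}, which then yields the desired contradiction. If two outlets $b_1,b_2$ provide the attachment, then $b_1,b_2$ cannot be joined in $S$ (an edge between them would lie in a block meeting both sides), so $S=\langle S_1,b_1,b_2,S_2\rangle$ with $S_1\perp S_2$, and Proposition~\ref{razval2} (or Corollary~\ref{after2}) applies once $a_1,a_2$ are chosen at maximal distance from $\{b_1,b_2\}$, exactly as in Corollary~\ref{after2}.

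The main obstacle — and where the bulk of the technical work will live — is the residual case in which $F(u)$ is attached through three or more outlets for every choice of $u$, and no cut vertex or cut pair is directly visible. Block decompositions are not unique (Example~\ref{dif_decomposition}), so I would exploit this flexibility: for a different removed vertex $u'$, re-decompose $S\setminus u'$ and seek a cut structure there, using Remark~\ref{rmrk:DeadEndEdge} to control which edges must persist in $S$. Running a finite case analysis over the five block types at the interface, similar in spirit to the tables in the proofs of Propositions~\ref{razval1} and~\ref{razval2}, shows that every surviving configuration either forces an edge forbidden by orthogonality or collapses the far part to at most two vertices, giving $n\le 7$ against our assumption. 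Thus no minimal non-decomposable quiver on $\ge 8$ vertices exists.
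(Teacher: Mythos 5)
Your overall strategy --- assume $|S|\ge 8$, use minimality to get hypothesis $(1)$ of Propositions~\ref{razval1} and~\ref{razval2} for free, and then exhibit a block decomposition of $S$ --- matches the paper's, and your first two cases (attachment through one outlet, attachment through two non-adjacent outlets) are in the right spirit. But there are genuine gaps. First, several of your ``automatic'' claims are not automatic: hypothesis $(3)$ of Proposition~\ref{razval1} (each component of $S\setminus b$ has at most $n-3$ vertices) fails whenever $S\setminus b$ has a component of size $1$ or $2$, which your setup does not exclude, and the paper itself flags that $(3)$ needs extra care precisely in the proof of Theorem~\ref{g8}. Likewise, your bound $|F(u)|\ge 2$ presupposes a bound on the valence of $u$ (hence on the number of blocks meeting $L(u)$), which you never establish; the paper devotes Lemmas~\ref{no5} and~\ref{no4} to proving ${\rm Val}_S(v)\le 4$ before anything else. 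And in the two-outlet case, your argument that $b_1$ and $b_2$ cannot be joined in $S$ is wrong: an edge $\langle b_1,b_2\rangle$ can perfectly well lie in a block contained entirely on one side, so hypothesis $(0)$ of Proposition~\ref{razval2} can fail and that subcase is not covered.

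The more serious gap is the ``residual case.'' Propositions~\ref{razval1} and~\ref{razval2} only produce a decomposition of $S$ when $S$ has a cut vertex or an independent $2$-cut with specific properties; a $3$-connected quiver admits neither, and block-decomposable quivers of large order with no such cut structure exist (e.g.\ closed necklaces of triangles coming from triangulated annuli). So your plan of ``always reduce to razval1/razval2'' cannot terminate by itself, and the assertion that a finite case analysis ``collapses the far part to at most two vertices'' is exactly the content of the theorem, not a proof of it. The paper resolves this differently: after excluding blocks of types ${\rm V}$ and ${\rm IV}$ from all decompositions of the $S\setminus x$ (Lemmas~\ref{no5},~\ref{no4}), it pins down the link of every vertex of valence $4$ as two disjoint edges (Lemma~\ref{wrong4}), of every vertex of valence $3$ as an edge plus an isolated vertex (Lemma~\ref{wrong3}), and rules out double edges (Corollary~\ref{nodouble}); it then builds a block decomposition of $S$ \emph{directly}, by showing the $3$-cycles are disjoint oriented triangles whose removal leaves a forest, taking each remaining edge as a block of type ${\rm I}$ and each triangle as a block of type ${\rm II}$. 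Some analogue of this direct construction (or of the local structure lemmas that enable it) is missing from your proposal and cannot be replaced by cut-vertex arguments alone.
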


The plan of the proof is the following. We assume that there exists a quiver $S$ of order at least $8$ satisfying the assumptions of Theorem~\ref{g8}, and show for each type of block that if a block-decomposable subquiver $S
\setminus u$ contains block of this type then $S$ is also block-decomposable.

Throughout this section we assume that $S$ satisfies the assumptions of Theorem~\ref{g8}. Here we emphasize that we do not assume the mutation class of $S$ to be finite.

A {\it link} $L_S(v)$ of vertex $v$ in $S$ is a subquiver of $S$ spanned by all neighbors of $v$. If $S$ is block-decomposable, we introduce for a given block decomposition a quiver $\Theta_{S}(v)$ obtained by gluing all blocks either containing $v$ or
having at least two points in common with $L_{S}(v)$. Notice that $\Theta_{S}(v)$ may not be a subquiver of $S$.
Clearly, $L_{S}(v)$ is a subquiver of $\Theta_{S}(v)$ for any block decomposition of $S$.

\begin{lemma}
\label{no5}
For any $x\in S$ any block decomposition of $S\setminus x$ does not contain blocks of type ${\rm{V}}$.

\end{lemma}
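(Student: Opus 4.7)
\smallskip
\noindent
\textbf{Proof plan.} I would argue by contradiction: suppose $S$ is minimal non-decomposable with $|S|\geq 8$ and that some block decomposition of $S\setminus x$ contains a block $B$ of type $\mathrm{V}$. The goal is to produce a block decomposition of $S$ itself, contradicting non-decomposability.

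The first step is a structural analysis of $B$ inside $S$. Since $B$ has $5$ vertices, Remark~\ref{rmrk:DeadEndEdge} forces every edge of $B$ incident to a dead end to be a simple edge of $S\setminus x$, and hence of $S$ (because $x\notin B$). This rigidity pins down the local picture around $B$: neighbors of $B$ in $(S\setminus x)\setminus B$ attach only through outlets, and the dead-end edges of $B$ cannot be cancelled by any other block. Since $|S\setminus x|\geq 7$ and $|B|=5$, the subquiver $T=(S\setminus x)\setminus B$ is non-empty, and in fact contains at least two vertices unless $x$ itself is the only external neighbor of $B$; the case $|T|\leq 1$ can be handled separately and is where $|S|\geq 8$ first enters.

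The second step is to exploit minimality a second time by choosing a dead end $y$ of $B$ and considering a block decomposition of $S\setminus y$. The removal of $y$ dismantles $B$ but keeps $T\cup\{x\}$ intact, while the forced simple-edge structure around the remaining dead ends of $B$ from the first step severely restricts how the new decomposition can look. Cross-referencing the two decompositions (of $S\setminus x$ and of $S\setminus y$) forces $x$ to attach either through the outlets of $B$, or along edges that can be absorbed into a type~$\mathrm{I}$, $\mathrm{II}$, or $\mathrm{III}$ block glued to $B$ at an outlet.

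The third step is to conclude by invoking the separation machinery of Section~\ref{blockdecomp}. The outlets of $B$ are natural candidates for the separating vertex of Proposition~\ref{razval1} or the separating pair of Proposition~\ref{razval2}, because removing an outlet disconnects $B$ from $T$. I would verify the hypotheses using $|S|\geq 8$ (to get component size bounds) and the minimality of $S$ (to get property $(1)$ for free). When the candidate separating data come with a leaf or degeneracy, I would fall back on Corollary~\ref{after2} with an appropriate choice of $c_1$. In each configuration either a direct block decomposition of $S$ is assembled by gluing a small block that contains $x$ onto the existing decomposition, or one of Propositions~\ref{razval1},~\ref{razval2} or Corollary~\ref{after2} applies.

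The main obstacle is the case where $x$ is attached to one of the dead ends of $B$, since this threatens to break the simple-edge rigidity of the type~$\mathrm{V}$ block, and also the degenerate configurations where $|T|$ is as small as possible so that the component-size hypothesis $(3)$ of Proposition~\ref{razval1} only barely holds. Handling these borderline cases will likely require a miniature case analysis on the type of block in $S\setminus y$ that contains $y$'s former neighbors, analogous in spirit to Cases 3.1 and 3.2 in the proof of Proposition~\ref{razval2}.
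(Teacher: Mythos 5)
Your high-level skeleton (argue by contradiction, delete a second vertex, then invoke the separation machinery of Section~\ref{blockdecomp}) matches the paper's, but the central step is both unexecuted and, as described, aimed in the wrong direction. The paper's proof hinges on Proposition~\ref{5is5}: the four dead ends of a type ${\rm V}$ block form a \emph{chordless $4$-cycle}, and since a chordless $4$-cycle can only sit inside a block of type ${\rm V}$, for every vertex $u$ outside $B$ (and outside one chosen neighbour of the outlet) the subquiver $\l v,v_1,\dots,v_4\r$ reappears as a full type ${\rm V}$ block in any decomposition of $S\setminus u$; letting $u$ vary then shows that nothing outside $B$ --- in particular not $x$ --- is joined to any dead end. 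Your step 2 instead deletes a dead end $y$ of $B$. This destroys the chordless $4$-cycle, so the rigidity you are counting on evaporates: after removing $y$ the local picture is an outlet joined to a path of three vertices, which admits several block decompositions and forces nothing about where $x$ attaches. Moreover your intended conclusion --- that $x$ attaches ``through the outlets of $B$ or along edges that can be absorbed into a type ${\rm I}$, ${\rm II}$ or ${\rm III}$ block glued to $B$ at an outlet'' --- cannot be the right target: a type ${\rm V}$ block has a single outlet, and an edge from $x$ to a dead end can never be absorbed by gluing, since gluing happens only at outlets.

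You correctly flag ``$x$ attached to a dead end'' as the main obstacle, but you leave it unresolved, and that is exactly where the content of the lemma lies. The paper disposes of it by splitting on ${\rm Val}_S(v)$: if ${\rm Val}_S(v)\ge 5$, Proposition~\ref{5is5} (driven by the $4$-cycle rigidity) rules out any external neighbour of the dead ends and Proposition~\ref{razval1} is applied to the outlet $v$; if ${\rm Val}_S(v)=4$, one does not rule the configuration out at all --- instead the whole of $B$ can meet the rest of $S$ only through $x$, so Proposition~\ref{razval1} is applied to $x$ itself, and any edges from $x$ to dead ends are simply swallowed by the resulting decomposition. Neither the chordless-$4$-cycle argument nor the application of Proposition~\ref{razval1} to $x$ appears in your plan, so as written the proposal does not close.
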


To prove the lemma we use the following proposition.

\begin{prop}
\label{5is5}
Suppose that $S\setminus x$ contains a subquiver $S_1$ consisting of a block $B$ of type ${\rm{V}}$ (with dead ends $v_1,\dots,v_4$ and outlet $v$) and a vertex $t$ joined with $v$ (and probably with some of $v_i$). Then for any $u\in S\setminus S_1$
and any block decomposition of $S\setminus u$ a subquiver $\l v,v_1,\dots,v_4\r$ is contained in one block of type ${\rm{V}}$. In particular, $t$ does not attach to any of $v_i$, $i=1,\dots,4$.

\end{prop}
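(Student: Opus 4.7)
The plan is to show that, in any block decomposition $\mathcal{D}$ of $S\setminus u$ for $u\in S\setminus S_1$, the induced subquiver on $\{v,v_1,\dots,v_4\}$ is realized as a single block of type $\rm{V}$, namely $B$ itself. The ``in particular'' clause then follows easily: each $v_i$ is a dead end of this block and hence lies in no other block of $\mathcal{D}$, so it has no neighbors in $S\setminus u$ outside $\{v,v_1,\dots,v_4\}$. Since $|S|\ge 8>|S_1|$, one can choose $u\in S\setminus S_1$ with $u\ne t$, and then $t$ cannot be joined with any $v_i$ in $S\setminus u$, hence not in $S$ either.

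To prove the main claim, I would fix $u\in S\setminus S_1$ and any block decomposition $\mathcal{D}$ of $S\setminus u$. Since $u\notin S_1$, the induced subquiver of $S\setminus u$ on $\{v,v_1,\dots,v_4\}$ coincides with $B$ as a weighted oriented quiver; in particular, at each $v_i$, the edges to the other four members of $\{v,v_1,\dots,v_4\}$ have precisely the weights and orientations prescribed by $B$. The core step is a rigidity argument: each $v_i$ must be a dead end of its block in $\mathcal{D}$. Assuming for contradiction that $v_i$ is an outlet of some $B_1\in\mathcal{D}$, I would go case by case through $B_1\in\B_{\rm{I}},\B_{\rm{II}},\B_{\rm{III}},\B_{\rm{IV}},\B_{\rm{V}}$; in each case, the extra edges at $v_i$ contributed by $B_1$ must be cancelled against an identical block $B_2\in\mathcal{D}$ sharing $v_i$ as an outlet, and then applying Remark~\ref{rmrk:DeadEndEdge} to the dead ends of $B_1$ and $B_2$ forces a surviving edge from $v_i$ to some vertex outside $\{v,v_1,\dots,v_4\}$, contradicting that the local structure of $S\setminus u$ at $v_i$ already matches $B$ exactly.

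Once each $v_i$ is shown to be a dead end in $\mathcal{D}$, Remark~\ref{rmrk:DeadEndEdge} implies that every edge of $B$ incident to $v_i$ lies in the unique block of $\mathcal{D}$ containing $v_i$. A short combinatorial check then shows that smaller blocks cannot be stitched together along $v$ alone to realize the quiver $B$ on $\{v,v_1,\dots,v_4\}$ with all four $v_i$'s as dead ends, so all four of them must lie in a single block $B''\in\mathcal{D}$; edge-matching then forces $B''$ to be block $B$ itself. The main obstacle is the rigidity step: the enumeration over the five block types for $B_1$ is finite but requires a careful bookkeeping of orientations and weights together with the fact that dead-end edges can never cancel, which is where almost all of the combinatorial work sits.
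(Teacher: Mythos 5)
Your overall strategy --- proving that each $v_i$ must be a dead end of its block in the new decomposition and then assembling the type ${\rm V}$ block from that --- is genuinely different from the paper's, which works at the outlet $v$ instead: since $t$ is joined with $v$, the valence of $v$ in $S\setminus u$ is at least $5$, so $v$ lies in exactly two blocks, at least one of which must be of type ${\rm IV}$ or ${\rm V}$; type ${\rm IV}$ is excluded because it would leave at most $3$ neighbours of $v$ of valence at least $3$ in $S\setminus u$, whereas $v_1,\dots,v_4$ are four such neighbours; and then the chordless $4$-cycle on $v_1,\dots,v_4$ must lie inside the type ${\rm V}$ block at $v$, since no other block contains a chordless $4$-cycle. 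Your route, as written, has gaps that I do not see how to close without reverting to an argument of this kind.

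First, the fact that your main argument never uses $t$ is a warning sign: the $5$-vertex quiver underlying a type ${\rm V}$ block is not rigid on its own. Nothing in your local analysis rules out a decomposition in which $v$ is an outlet of two type ${\rm II}$ blocks $(v,v_1,v_2)$ and $(v,v_3,v_4)$ while the remaining edges $\langle v_1,v_4\rangle$ and $\langle v_2,v_3\rangle$ are type ${\rm I}$ blocks; there every $v_i$ is an outlet, so the rigidity you claim fails at the level of generality at which you argue. The hypothesis on $t$, which forces ${\rm Val}_{S\setminus u}(v)\ge 5$, is exactly what kills such configurations, and it enters through $v$, not through the $v_i$. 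Second, your mechanism for the contradiction is incorrect: if $v_i$ is an outlet of a block $B_1$, the edges of $B_1$ at $v_i$ need not cancel against a duplicate block. For instance $B_1$ could be a genuine triangle $(v_i,v,v_{i+1})$ (its third edge $\langle v,v_{i+1}\rangle$ really is present in $S$), supplemented by a type ${\rm I}$ block carrying $\langle v_i,v_{i-1}\rangle$; no cancellation occurs and Remark~\ref{rmrk:DeadEndEdge} yields nothing. Third, your contradiction assumes $v_i$ has no neighbour outside $\{v,v_1,\dots,v_4\}$ in $S\setminus u$. That is known only for $u=x$: for $u\ne x$ the vertex $x$ survives in $S\setminus u$ and could a priori be joined to $v_i$, and excluding this is part of what the proposition asserts, so it cannot be assumed. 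The clean fix is the paper's valence count at the outlet $v$.
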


\begin{proof}
Take any $u\in S\setminus S_1$ and consider any block decomposition of $S_2=S\setminus u$. Since valence of $v$ in $S_2$ is at least $5$, $v$ is contained in exactly two blocks $B_1$ and $B_2$, at least one of which is of the type ${\rm{V}}$ or ${\rm{IV}}$. Suppose that none of $B_1$ and $B_2$ is of the type ${\rm{V}}$, and let $B_1$ be of the type ${\rm{IV}}$. Then for any choice of $B_2$ the number of vertices of $S_2$ which are neighbors of $v$ and  have valence at least three in $S_2$ does not
exceed $3$. However, there are at least four such vertices $v_1,\dots,v_4$, so the contradiction implies that we may assume $B_1$ to be of the type ${\rm{V}}$ with outlet $v$.

Since block $B$ of $S\setminus x$ is of type ${\rm{V}}$, the subquiver $\l v_1,v_2,v_3,v_4\r\subset S$ is a cycle. At the same time, the link $L_{S_2}(v)$ is a disjoint union of a cycle of order $4$ (composed by dead ends of $B_1$) and another
quiver with at most $4$ vertices (composed by vertices of $B_2\setminus v$). If we assume that $v_1,v_2,v_3,v_4$ are not contained in one block $B_1$ (or $B_2$) in $S_2$, then $v_1,v_2,v_3,v_4$ do not compose a cycle, and we come to a
contradiction. To complete the proof it is enough to notice that only block of type ${\rm{V}}$ contains chordless cycle of length $4$.

\end{proof}

\begin{proof}[Proof of Lemma~\ref{no5}]
Suppose that a block decomposition of $S\setminus x$ contains a block $B$ of type ${\rm{V}}$ with dead ends $v_1,\dots,v_4$ and outlet $v$. Consider two cases:
either ${\rm{Val}}_S(v)\ge 5$ or ${\rm{Val}}_S(v)=4$.

\smallskip
 \noindent
{\bf Case 1:} ${\rm Val}_S(v)\ge 5$. \ Then there exists $u\in S$, $u\notin B$ that is joined with $v$.
Denote $S_1=\l v, v_1,v_2,v_3,v_4,u\r$, and consider any block decomposition of
$S_2=S\setminus w_2$  for any $w_2\notin S_1$.
By Proposition~\ref{5is5}, $\l
v,v_1,v_2,v_3,v_4\r$ form a block $B_1$ of type ${\rm{V}}$ with the outlet $v$.
Therefore, no vertex of $\l u, S_2\setminus S_1\r $ is joined with $v_1,v_2,v_3,v_4$.
Since $|S|\ge 8$, we have $S\setminus \l w_2,S_1\r\ne \emptyset$.
Consider a block decomposition of $S_3=S\setminus w_3$ for some $w_3\in S\setminus\l S_1,w_2\r$.
No vertex of $S_3\setminus S_1$ is joined with $v_1,v_2,v_3,v_4$.
In particular, we obtain that none of  $w_2,\ w_3,\ u$ is joined with $v_1,v_2,v_3,v_4$.
Moreover, since $w_2$ and $w_3$ are arbitrary vertices of $S\setminus S_1$,
this implies that no vertex of $\l u,S\setminus S_1\r$ is joined with $v_1,v_2,v_3,v_4$.
Thus,  $S$ is block-decomposable by Proposition~\ref{razval1} applied to $v$.

\smallskip
\noindent
{\bf Case 2:} ${\rm Val}_S(v)=4$. \ Fix a block decomposition of $S\setminus x$ containing $B$.
Since ${\rm Val}_S(v)=4$ and $v_1,\dots,v_4$ are dead ends,
no vertex of $S\setminus\{x\cup B\}$ is joined with vertices of $B$. Again, $B$ is block-decomposable by Proposition~\ref{razval1} applied to $x$.

\end{proof}

\begin{lemma}
\label{no4}
For any $x\in S$ no block decomposition of $S\setminus x$ contains blocks of type ${\rm{IV}}$.

\end{lemma}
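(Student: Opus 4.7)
The plan is to mimic the strategy of Lemma~\ref{no5}. Suppose, for contradiction, that there exist $x\in S$ and a block decomposition $\mathcal{D}$ of $S\setminus x$ containing a block $B\in\B_{\rm{IV}}$; denote the outlets of $B$ by $v_1,v_2$ and its dead ends by $w_1,w_2$. By Remark~\ref{rmrk:DeadEndEdge} the vertices $w_1,w_2$ are joined in $S\setminus x$ only to vertices of $B$, so in $S$ itself each $w_i$ can be joined outside $B$ only to $x$.

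The first step is to prove a rigidity statement analogous to Proposition~\ref{5is5}: if some vertex $t\in S\setminus(B\cup\{x\})$ is joined to an outlet of $B$, then for any $u\in S\setminus\l B,t\r$ and any block decomposition of $S\setminus u$, the four vertices $v_1,v_2,w_1,w_2$ lie inside a single block of type~$\rm{IV}$ with outlets $v_1,v_2$. The argument uses Lemma~\ref{no5} to exclude blocks of type $\rm{V}$ containing these vertices, the valence constraints on $w_1,w_2$ (which force them to appear as dead ends of whatever block contains them), and a direct enumeration of the possible ways of distributing the $4$-vertex subquiver $\l v_1,v_2,w_1,w_2\r$ across blocks of types $\rm{I}$--$\rm{IV}$. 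This enumeration will be the bulk of the technical work.

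Then I split into two cases. \emph{Case 1:} both outlets $v_1,v_2$ are joined to some vertex of $S\setminus(B\cup\{x\})$. Since $|S|\ge 8$, we may pick two distinct vertices $y_1,y_2\in S\setminus(B\cup\{x,t\})$, and the rigidity step applied separately to $S\setminus y_1$ and $S\setminus y_2$ forces $w_1,w_2$ to be joined only to $v_1,v_2$ in $S$ (the intersection of the two exclusions $y\notin B\cup\{y_j\}$ eliminates $x$ as a possible neighbor of $w_i$ as well). Hence removing $v_1$ disconnects $\{v_2,w_1,w_2\}$ from the rest of $S\setminus v_1$, and the hypotheses of Proposition~\ref{razval1} are satisfied for $v_1$ (assumption $(3)$ follows from $|S|\ge 8$). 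We conclude that $S$ is block-decomposable, contradicting the hypothesis on $S$. \emph{Case 2:} some outlet, say $v_2$, has no neighbor in $S\setminus(B\cup\{x\})$. Then $v_2,w_1,w_2$ are joined to $S\setminus (B\cup\{x\})$ only through $v_1$. If $x$ is not joined to $\{v_2,w_1,w_2\}$, then $S\setminus v_1$ has $\{v_2,w_1,w_2\}$ as an entire connected component and Proposition~\ref{razval1} applied to $v_1$ immediately yields a decomposition. Otherwise, I take a block decomposition of $S\setminus y$ for a suitable $y\in S\setminus(B\cup\{x\})$ and analyze locally how $x$ attaches to the configuration around $B$; in each subcase one either extends $\mathcal{D}$ to a block decomposition of $S$ by replacing $B$ together with a small block containing $x$, or one falls back on Proposition~\ref{razval1}.

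The main obstacle is the rigidity step. Because $B$ has only two dead ends, the $4$-vertex subquiver $\l v_1,v_2,w_1,w_2\r$ admits several \emph{a priori} possible alternative decompositions (for instance into a type $\rm{II}$ block plus a type $\rm{I}$ block, or into two type $\rm{II}$ blocks sharing an edge), and ruling these out requires careful case-by-case bookkeeping in the spirit of the analysis of Proposition~\ref{razval2}, using the precise oriented structure of $B$ and the presence of an external witness $t$ joined to an outlet. Once the rigidity is in place, the deduction proceeds in direct parallel with the proof of Lemma~\ref{no5}, with Proposition~\ref{razval1} providing the final step.
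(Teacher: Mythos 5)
The central difficulty is exactly where you place it, and unfortunately the rigidity statement you propose to resolve it is false, so the argument does not go through. A block $B\in\B_{\rm{IV}}$ contributes a \emph{simple} edge between its outlets $v_1,v_2$; in the given decomposition of $S\setminus x$ another block may be glued along $(v_1,v_2)$, producing a double edge (or no edge at all) between $v_1$ and $v_2$ in $S$. In that situation no single block of type ${\rm{IV}}$ on $\{v_1,v_2,w_1,w_2\}$ can reproduce the quiver $\l v_1,v_2,w_1,w_2\r$ in any decomposition of $S\setminus u$, so the conclusion of your rigidity step is unattainable, not merely hard to prove. Worse, even when the edge $(v_1,v_2)$ is simple, there are competing decompositions that your four-vertex enumeration cannot see because they recruit \emph{outside} vertices: in the configuration the paper meets in its Case~2.2.1 (where $v_1$ also lies in a triangle $(v_1,v_2,u_1)$ and $x$ is joined to $w_1$), the decomposition of $S\setminus y$ is forced to place $v_1,v_2,w_2,u_1$ in one block of type ${\rm{IV}}$ and to leave $w_1$ outside it; excluding this requires a third auxiliary vertex $z$ and a separate contradiction, not bookkeeping on $\l v_1,v_2,w_1,w_2\r$. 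There is also a concrete error in your Case~1: granting rigidity, you conclude that deleting $v_1$ disconnects $\{v_2,w_1,w_2\}$ from the rest, but Case~1 assumes $v_2$ has a neighbour outside $B\cup\{x\}$, so $S\setminus v_1$ remains connected through $v_2$ and hypothesis $(0)$ of Proposition~\ref{razval1} fails.

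The paper's proof is organized along entirely different lines: it first bounds ${\rm Val}_{S\setminus x}(v_1)$ by $6$ (using Lemma~\ref{no5}), then runs a descending case analysis on this valence, splitting further according to whether $x$ is joined to $v_1$. In each terminal case it either derives a contradiction or exhibits a block decomposition of $S$ directly --- typically not via Proposition~\ref{razval1} applied to $v_1$, but by showing that in a decomposition of $S\setminus w_1$ the vertices $v_1,v_2,w_2$ form a block of type ${\rm{II}}$ and then replacing that triangle by the block $(v_1,v_2,w_1,w_2)$ of type ${\rm{IV}}$. To salvage your plan you would at minimum have to weaken the rigidity statement to admit all of these alternative configurations and then dispose of each one, at which point you will have reconstructed essentially the paper's case analysis.
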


\begin{proof}
Consider any block decomposition of $S\setminus x$.
Any vertex is contained in at most two blocks.
By Lemma~\ref{no5}, any block decomposition of $S\setminus x$ does not contain
blocks of type ${\rm{V}}$.
This implies that the valence
${\rm Val}_{S\setminus x}(v)$ does not exceed $6$ for any $v\in S\setminus x$.
Thus, ${\rm Val}_{S}(v)\le 8$ (recall that any proper subquiver of $S$, and therefore $S$ itself, does not contain edges of multiplicity greater than $2$ due to Theorem~\ref{less3}).

Now let $B$ be a block of type ${\rm{IV}}$ in some block decomposition of $S\setminus x$,
denote by $v_1$ and $v_2$ the outlets of $B$,
and assume that ${\rm Val}_{S\setminus x}(v_2)\le {\rm Val}_{S\setminus x}(v_1)\le 6$.
If ${\rm Val}_{S\setminus x}(v_2)= {\rm Val}_{S
\setminus x}(v_1)$, we assume that ${\rm Val}_{S}(v_2)\le {\rm Val}_{S}(v_1)$.
We analyze the situation case by case with respect to the valence ${\rm{Val}}_{S\setminus x}(v_1)$ decreasing.
Each case splits in two: either $v_1$ is joined with
$x$ or not (see Table~\ref{bl4}).

\begin{table}[!h]
\begin{center}
\caption{To the proof of Lemma~\ref{no4}}
\label{bl4}
\psfrag{1}{\bf 1}
\psfrag{1.}{\scriptsize ${\rm{Val}}_{S\setminus x}(v_1)=6$}
\psfrag{2}{\bf 2}
\psfrag{2.}{\scriptsize ${\rm{Val}}_{S\setminus x}(v_1)=5$}
\psfrag{3}{\bf 3}
\psfrag{3.}{\scriptsize ${\rm{Val}}_{S\setminus x}(v_1)=4$}
\psfrag{4}{\bf 4}
\psfrag{4.}{\scriptsize ${\rm{Val}}_{S\setminus x}(v_1)=3$}
\psfrag{5}{\bf 5}
\psfrag{5.}{\scriptsize ${\rm{Val}}_{S\setminus x}(v_1)=2$}

\psfrag{1.1}{\small \bf 1.1}
\psfrag{1.1.} {\scriptsize $x\not\perp v_1$}
\psfrag{1.2.} {\scriptsize $x\perp v_1$}

\psfrag{1.2}{\small \bf 1.2}
\psfrag{2.1}{\small \bf 2.1}
\psfrag{2.2}{\small \bf 2.2}
\psfrag{3.1}{\small \bf 3.1}
\psfrag{3.2}{\small \bf 3.2}
\psfrag{4.1}{\small \bf 4.1}
\psfrag{4.2}{\small \bf 4.2}

\psfrag{1.1.1}{\scriptsize {\bf 1.1.1}}
\psfrag{1.1.2}{\scriptsize {\bf 1.1.2}}
\psfrag{1.2.1}{\scriptsize {\bf 1.2.1}}
\psfrag{1.2.2}{\scriptsize {\bf 1.2.2}}
\psfrag{2.2.1}{\scriptsize {\bf 2.2.1}}
\psfrag{2.2.2}{\scriptsize {\bf 2.2.2}}
\psfrag{3.2.1}{\scriptsize {\bf 3.2.1}}
\psfrag{3.2.2}{\scriptsize {\bf 3.2.2}}
\psfrag{4.2.1}{\scriptsize {\bf 4.2.1}}
\psfrag{4.2.2}{\scriptsize {\bf 4.2.2}}

\psfrag{2.2.2.1}{\tiny {\bf 2.2.2.1}}
\psfrag{2.2.2.2}{\tiny {\bf 2.2.2.2}}
\psfrag{B2}{\tiny $B_2$}
\psfrag{w3}{\tiny $w_3$}
\psfrag{w4}{\tiny $w_4$}
\psfrag{v1}{\tiny $v_{1}$}
\psfrag{v2}{\tiny $v_2$}
\psfrag{v3}{\tiny $v_3$}
\psfrag{u1}{\tiny $u_1$}
\psfrag{u2}{\tiny $u_2$}
\psfrag{x}{\tiny $x$}
\psfrag{or}{\tiny or}
\psfrag{v=v}{\tiny $v_3=v_2$}
\psfrag{vv}{\tiny $v_3\ne v_2$}
\psfrag{v=u}{\tiny $u_2=v_2$}
\psfrag{vu}{\tiny $u_2\ne v_2$}
\psfrag{n}{\tiny no block contains $u_1, u_2, v_2$}
\psfrag{sim}{\tiny simultaneously}
\psfrag{c1}{\tiny $B_2$ contains $u_1, u_2, v_2$}
\epsfig{file=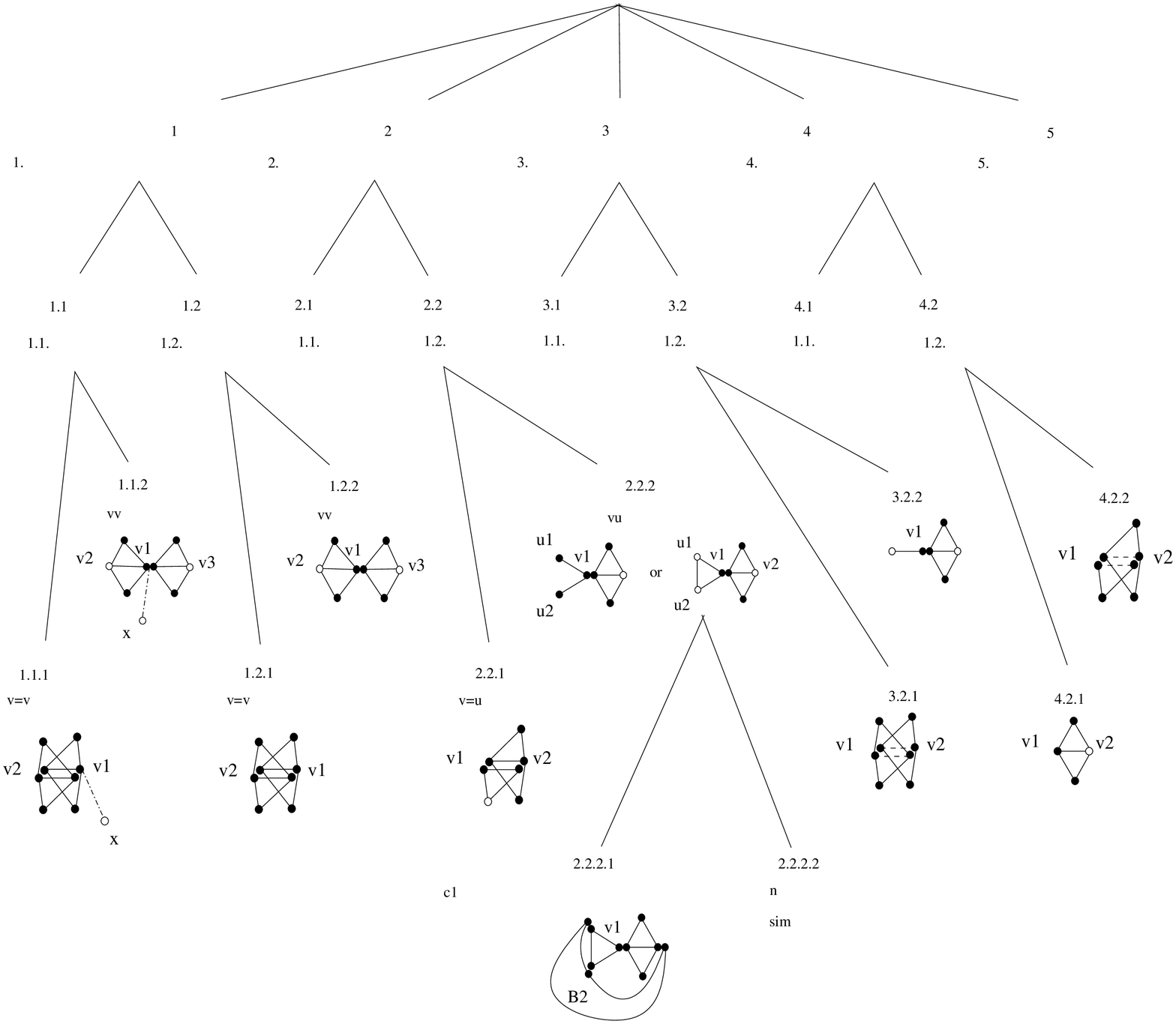,width=1.0\linewidth}
\end{center}
\end{table}

Notice that $x$ may be joined with $v_1$ by a simple edge only.
Indeed, suppose that $x$ is joined with $v_1$ by a double edge.
Denote by $w_1$ and $w_2$ dead ends of $B$.
Since the subquiver $\l x,v_1,w_1 \r$ is decomposable and  its mutation class is finite,
$x$ is joined with $w_1$ and the triangle composed by $x,v_1$ and $w_1$ is oriented (this follows from the fact that
 the mutation class of a chain of a double edge and a simple edge is infinite independently
of the orientations of edges, and the mutation class of a non-oriented triangle containing
a double edge is also infinite). By the same reason, $v_2$ is joined with both $x$ and $v_1$, and
the triangle composed by $x,v_1$ and $v_2$ is oriented.
Thus, directions of edges $\l v_1, w_1\r$ and $\l v_1, v_2\r$ induce opposite orientations of edge $\l x, w_1\r$
(see Fig.~\ref{bl4_}). Obtained contradiction shows that ${\rm Val}_{S}(v_1)\le {\rm Val}_{S\setminus x}(v_1)+1$.

\begin{figure}[!hb]
\begin{center}
\psfrag{v1}{\tiny $v_1$}
\psfrag{v2}{\tiny $v_2$}
\psfrag{w1}{\tiny $w_1$}
\psfrag{w2}{\tiny $w_2$}
\psfrag{x}{\tiny $x$}
\psfrag{or}{\small or}
\epsfig{file=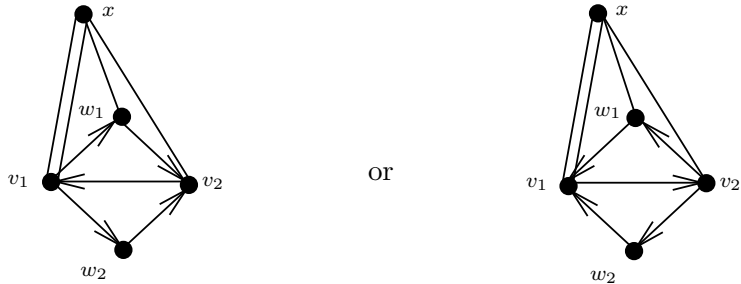,width=0.65\linewidth}
\caption{To the proof of Lemma~\ref{no4}. One of the triangles $xv_1v_2$ and $xv_1w_1$ is non-oriented.}
\label{bl4_}
\end{center}
\end{figure}

\smallskip
\noindent
{\bf Case 1:} ${\rm Val}_{S\setminus x}(v_1)=6$. \

\smallskip
\noindent
{\bf Case 1.1:}   $x\not\perp v$, hence ${\rm Val}_S(v_1)=7$. \
Then $v_1$ is contained in block $B_1$ of type ${\rm{IV}}$,
and $v_1$ is joined with $x$. Denote the dead ends of $B_1$ by $w_3$ and $w_4$,
and the remaining outlet by $v_3$.

\smallskip
\noindent
{\bf Case 1.1.1:} $v_3$ coincides with $v_2$. \
Since ${\rm Val}_S(v_1)=7$, $v_1$ and $v_2$ are joined by a double edge.
Consider $S_1=S\setminus w_1$ with some block decomposition. Clearly, ${\rm Val}_{S_1}(v_1)=6$,
and subquiver $\l v_1,v_2,x,w_2,w_3,w_4\r\subset S_1$ is obtained by
gluing two blocks of fourth type along the edge $(v_1,v_2)$.
In particular, $x$ is a dead end of one of these blocks,
so $x$ is joined with $v_2$ and is not joined with $w_2,w_3,w_4$.
Similarly, considering $S_2=S\setminus w_2$, we see that $x$ is not joined with $w_1$ either.

Now consider subquiver $S'=\l v_1,x,w_1,w_2,w_3,w_4\r$.
The only edges in $S'$ are those joining $v_1$ with other vertices.
Using Keller's applet~\cite{K} we can check that mutation class of $S'$ is infinite.
Recall that $S'$ is a subquiver of block-decomposable quiver and, hence,  is
block-decomposable itself. Therefore, its mutation class must be finite, and obtained
contradiction eliminates the considered case.

\smallskip
\noindent
{\bf Case 1.1.2:} $v_3$ does not coincide with $v_2$. Consider $S_1=S\setminus v_3$. Since ${\rm Val}_{S_1}(v_1)=6$, subquiver $\l v_1,v_2,x,w_1,w_2,w_3,w_4\r\subset S_1$ is obtained by gluing of two blocks of fourth type at $v_1$. Vertices $w_3$ and
$w_4$ are not joined, so $x$ is an outlet of block $\l v_1,x,w_3,w_4\r$. In particular, $x$ is joined with $w_3$ and $w_4$ and is not joined with $w_1$ and $w_2$. Now, considering $S_2=S\setminus v_2$, we obtain in a similar way that $x$ is joined with
$w_1$ and $w_2$ and is not joined with $w_3$ and $w_4$, so we come to a contradiction.

\smallskip
\noindent
{\bf Case 1.2:} $x\perp v_1$, hence ${\rm Val}_S(v_1)=6$. \ As in the previous case, $v_1$ is contained in block $B_1$ of type ${\rm{IV}}$. Denote the dead ends of $B_1$ by $w_3$ and $w_4$, and the remaining outlet by $v_3$.

\smallskip
\noindent
{\bf Case 1.2.1:} $v_3$ coincides with $v_2$. Since ${\rm Val}_S(v_1)=6$, $v_1$ and $v_2$ are joined by a double edge. The only outlets of $B$ and $B_1$ are $v_1$ and $v_2$, and they are already contained in two blocks each. Therefore, the quiver
spanned by $B$ and $B_1$ has no outlets, so any other vertex of $S$ except $x$ is not joined with $S'=\l v_1,v_2,w_1,w_2,w_3,w_4\r$. Now take any vertex $u$ distinct from $x$ which is not contained in $B$ or $B_1$, and consider any block
decomposition of $S_1=S\setminus u$. Since ${\rm Val}_{S_1}(v_1)={\rm Val}_{S_1}(v_2)=6$, the subquiver $S'\subset S_1$ is again obtained by gluing of two blocks of fourth type along the edge $(v_1,v_2)$. By the reasons described above, no vertex of $S$
except $u$ is joined with vertices of $S'$. This implies that neither $x$ nor $u$ is attached to $S'$, so no vertex of $S\setminus S'$ is attached to $S'$ and $S$ is not connected.

\smallskip
\noindent
{\bf Case 1.2.2:} $v_3$ does not coincide with $v_2$.
Recall that none of vertices $w_1,w_2,w_3,w_4$ is joined with vertices of
$S\setminus\{x\cup (B_1\cap B_2)\}$.
Let us prove that they are not joined with $x$ either.

There are two options for link $L_{S\setminus x}(v_1)$:
either $v_2$ is joined with $v_3$ or not. Notice that if two vertices
$u$ and $v$ are dead ends of a block of type ${\rm{IV}}$
with outlet $v_1$ in some block decomposition of any quiver $S'$, then $u$ and $v$
are leaves of the link $L_{S'}(v)$, and the distance between them equals two.

Consider the quiver $S_1=S\setminus w_1$ with some block decomposition.
Since ${\rm Val}_{S_1}(v_1)=5$, subquiver $\l v_1,v_2,v_3,w_2,w_3,w_4\r\subset S_1$
is obtained by gluing a block of fourth type and
a block of second or third type at $v_1$. Looking at
the link $L_{S_1}(v_1)$, we see that there is only one pair
of leaves at distance two, namely $w_3$ and $w_4$.
Therefore, vertices $v_1,v_3,w_3,w_4$ are contained in one block of fourth type.
In particular, $x$ is not joined with $w_3$ and $w_4$.
Similarly, considering $S_2=S\setminus w_3$,
we obtain that $x$ is not joined with $w_1$ and $w_2$.

Let us take another one look at block decomposition of $S_1$.
Since $w_2$ is joined with $v_2$, vertices $v_1,v_2,w_2$ are
contained in a block of second type, i.e. triangle.
Since none of $w_1$ and $w_2$ is joined with any other vertex of $S$ than $v_1$ and $v_2$
we can replace triangle $v_1v_2w_2$ by block $(v_1,v_2,w_1,w_2)$
of type ${\rm{IV}}$ to obtain a block decomposition of $S$.

\medskip
\noindent
{\bf Case 2:} ${\rm Val}_{S\setminus x}(v_1)=5$. \

\smallskip
\noindent
{\bf Case 2.1:} $x\not\perp v_1$, ${\rm Val}_S(v_1)=6$. \  Since $  |S|\ge 8$, there exists $y\in S$ which is not joined with $v_1$. Since  ${\rm Val}_{S\setminus y}(v_1)=6$, in any block decomposition of $S\setminus y$ vertex $v_1$ is an outlet of a block  of type ${\rm{IV}}$, so we may refer to Case~1.2.

\smallskip
\noindent
{\bf Case 2.2:} $x\perp v_1$, ${\rm Val}_S(v_1)=5$.
\  Vertex $v_1$ is contained in block $B$ and
in block $B_1$ of type ${\rm{II}}$ or ${\rm{III}}$.
Vertices $w_1$ and $w_2$ are dead ends of $B$,
so they are joined in $S\setminus x$ with $v_1$ and $v_2$ only.

Denote by $u_1$ and $u_2$ the remaining vertices of $B_1$, and consider the quiver $\Theta_{S\setminus x}(v_1)$. Since the union of $B$ and $B_1$ has at most $3$ outlets, $\Theta_{S\setminus x}(v_1)$ consists of blocks $B$, $B_1$ and probably
some block $B_2$ containing at least two of vertices $v_2,u_1$ and $u_2$. Consider the following three cases.

\smallskip
\noindent
{\bf Case 2.2.1:} Vertex $v_2$ coincides with $u_2$.
In this case $v_1$ and $v_2$ are joined by a double edge,
$B_1$ is a block of second type (which implies
that $u_1$ is joined with $v_2$, so ${\rm Val}_S(v_2)\ge 5$),
and the union of $B$ and $B_1$ has a unique outlet $u_1$.
Thus, $\l L_{S}(v_1),v_1\r\setminus u_1$ may be joined with $x$ only.
Since ${\rm Val}_S(v_2)\le {\rm Val}_S(v_1)=5$, $x$ is not joined
with $v_1$ and $v_2$. If $x$ is not joined with $w_1$ and $w_2$ either,
then we can apply Proposition~\ref{razval1} to $u_1$.
Therefore, we may assume that $x$ is joined with at least one of $w_1$ and $w_2$, say $w_1$.

Now take any $y\notin\l B,u_1,x\r$ and consider $S_1=S\setminus y$ with some block decomposition.
Recall that $y$ cannot be joined with any vertex of $B$.
Since ${\rm Val}_{S_1}(v_1)=5$, $v_1$ is contained
in some fourth type block of this decomposition together with $v_2$
and two of $w_1,w_2,u_1$. But $w_1$ is joined with $x$,
so it cannot be a dead end of the block.
Hence, $v_1,v_2,w_2,u_1$ compose a block of type ${\rm{IV}}$
with outlets $v_1,v_2$ and dead ends $w_2,u_1$.
In particular, neither $w_2$ nor $u_1$ is attached to $x$.
If $y$ is not joined with $u_1$, then we can apply Proposition~\ref{razval1} to $w_1$.
Therefore, we may assume that $y$ is joined with $u_1$.

By assumption, $|S|\ge 8$. Thus, we can take a vertex $z$ which does not coincide
with any of preceding ones, and consider $S_2=S\setminus z$ (see Fig.~\ref{bl4_221}).
As explained above, any block decomposition of
subquiver $\l B,u_1\r$ is a union of two blocks with one outlet only.
However, $w_1$ is joined with $x$, and $u_1$ is joined with $y$,
so we come to a contradiction.

\begin{figure}[!h]
\begin{center}
\psfrag{v1}{\tiny $v_1$}
\psfrag{v2}{\tiny $v_2$}
\psfrag{w1}{\tiny $w_1$}
\psfrag{w2}{\tiny $w_2$}
\psfrag{x}{\tiny $x$}
\psfrag{u1}{\tiny $u_1$}
\psfrag{y}{\tiny $y$}
\psfrag{z}{\tiny $z$}
\epsfig{file=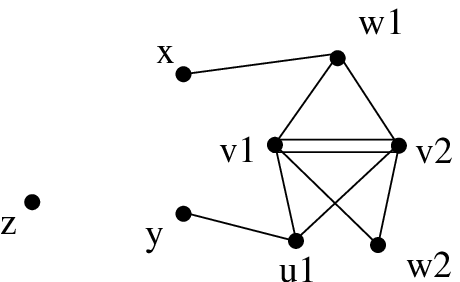,width=0.4\linewidth}
\caption{To the proof of Lemma~\ref{no4}, Case 2.2.1}
\label{bl4_221}
\end{center}
\end{figure}

\smallskip
\noindent
{\bf Case 2.2.2:} Neither $u_1$ nor $u_2$ coincides with $v_2$.
The case also splits into the following two.

\smallskip
\noindent
{\bf Case 2.2.2.1:} $\Theta_{S\setminus x}(v_1)$ consists of three blocks $B$,
$B_1$, $B_2$, where $B_2$ is a triangle with vertices $v_2,u_1$ and $u_2$.
\ In this case the quiver $\Theta_{S\setminus x}
(v_1)$ has no outlets, so no vertex of $S\setminus\Theta_{S\setminus x}(v_1)$
except $x$ is joined with $\Theta_{S\setminus x}(v_1)$.

Take any vertex $y\in S\setminus\Theta_{S\setminus x}(v_1)$ distinct from $x$,
and consider quiver $S_1=S\setminus y$. The quiver $\Theta_{S\setminus y}(v_1)$
is spanned by $\Theta_{S\setminus x}(v_1)$ and probably $x$.
Since ${\rm Val}_{S_1}(v_1)=5
$, three of vertices $w_1,w_2,v_2,u_1,u_2$
should compose a block $B'$ of type ${\rm{IV}}$ together with $v_1$.
Since none of $w_1,w_2$ is joined with any of $u_1,u_2$,
that block contains $v_2$. Vertex $v_1$ is also contained in some block $B''$
of type ${\rm{II}}$ or ${\rm{III}}$ which contains
two remaining vertices of  $\Theta_{S\setminus x}(v_1)$.
Similarly, the same two vertices lie in some block $B'''$
of type ${\rm{II}}$ or ${\rm{III}}$ containing $v_2$.
In particular, 
all vertices of $L_S(v_1)$ are either dead ends of block $B'$,
or are already contained in two blocks, so $x\notin \Theta_{S\setminus y}(v_1)$,
and, moreover, $x$ is not joined with $\Theta_{S\setminus y}(v_1)$
implying that $S$ is not connected.

\smallskip
\noindent
{\bf Case 2.2.2.2:} no block contains  vertices $v_2,u_1$ and $u_2$ simultaneously.
\ In this case $L_S(v_1)$ contains exactly two leaves on distance two from each other,
namely $w_1$ and $w_2$, see Table~\ref{bl4_2222}.
This means that for any $y\notin L_S(v_1)$ distinct from $v_1$ and $x$ and any block decomposition of $S_1=S\setminus y$ vertices $v_1,v_2,w_1,w_2$ compose a block of forth type, which implies that $x$ is not joined with $w_1$ and $w_2$.

\begin{table}[!h]
\begin{center}
\caption{To the proof of Lemma~\ref{no4}, Case 2.2.2.2}
\label{bl4_2222}
\psfrag{v1}{\tiny $v_1$}
\psfrag{v2}{\tiny $v_2$}
\psfrag{w1}{\tiny $w_1$}
\psfrag{w2}{\tiny $w_2$}
\psfrag{u2}{\tiny $u_2$}
\psfrag{u1}{\tiny $u_1$}
\psfrag{T}{\small $\Theta_{S\setminus x}(v_1)$}
\psfrag{L1}{\small $L_{S\setminus x}(v_1)$}
\psfrag{L2}{\small $L_{S\setminus w_1}(v_1)$}
\epsfig{file=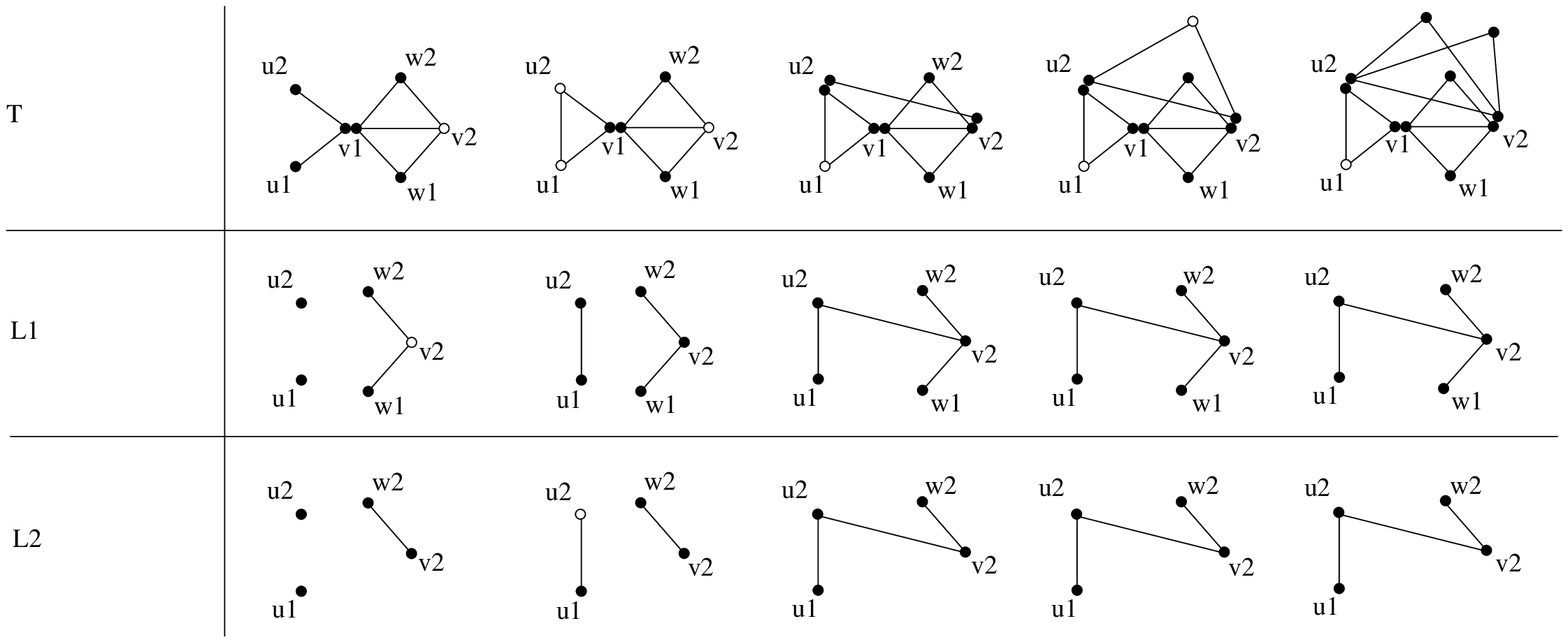,width=0.99\linewidth}
\end{center}
\end{table}

Now consider $S_2=S\setminus w_1$ with a block decomposition.
Clearly, ${\rm Val}_{S_2}(v_1)=4$. Looking at possible quivers $\Theta_{S\setminus x}(v_1)$
(see Table.~\ref{bl4_2222}),
one can notice that $L_{S_2}(v_1)$ does not contain a pair of
leaves on distance two,
so $v_1$ is contained in two blocks of second or third type.
Since $w_2$ is joined with $v_1$ and $v_2$ only,
it is easy to see that $w_2,v_1,v_2$ compose a block of type ${\rm{II}}$.
Now recall that neither $w_1$ nor $w_2$ are
joined with any vertex of $S$ except $v_1$ and $v_2$,
so we can replace triangle $v_1,v_2,w_2$ by block $v_1,v_2,w_1,w_2$ of type ${\rm{IV}}$
to obtain a block decomposition of $S$.

Notice that we do not show in Table.~\ref{bl4_2222} quivers $\Theta_{S\setminus x}(v_1)$ in which vertices $u_1$ and $u_2$ are contained in two blocks simultaneously. The case of a triangle with vertices $u_1,u_2$ and $v_2$ is treated in Case~2.2.2.1, and it is easy to see that all the others do not produce new leaves.

\medskip
\noindent
{\bf Case 3:}  ${\rm Val}_{S\setminus x}(v_1)=4$. \

\smallskip
\noindent
{\bf Case 3.1:} $x\not\perp v_1$, ${\rm Val}_S(v_1)=5$.
\  The proof is the same as in Case 2.1.
Namely, since $ |S|\ge 8$, there exists $y\in S$ which is not joined with $v_1$.
Since  ${\rm Val}_{S\setminus y}(v_1)=5$,
in any block decomposition of $S\setminus y$ vertex $v_1$ is an outlet
of a block  of type ${\rm{IV}}$, so we may refer to Case~2.2.

\smallskip
\noindent
{\bf Case 3.2:} $x\perp v_1$, ${\rm Val}_S(v_1)=4$. \  In this case vertex $v_1$ is
contained in block $B$ and in block $B_1$ of type ${\rm{I}}$ or ${\rm{IV}}$.
Consider these two cases separately.

\smallskip
\noindent
{\bf Case 3.2.1:} Block $B_1$ is of type ${\rm{IV}}$. \ The case is similar to Case~1.2.1,
the only difference is in the orientation of $B_1$: instead of getting double edge,
the edge $(v_1,v_2)$ cancels out. Vertex $v_2$ is also contained in $B_1$,
denote by $w_3$ and $w_4$ dead ends of $B_1$. The only vertex joined
with $B$ and $B_1$ is $x$. We want to show that $x$ is not joined
with $L_S(v_1)$, which will imply that $S$ is not connected.

Take any vertex $y\notin\l L_S(v_1),v_1,x\r$ and consider $S_1=S\setminus y$
with some block decomposition. If $v_1$ is contained in a block of fourth type,
then the second block containing $v_1$ is also of forth type
(otherwise the remaining block is an
edge, and the link $L_{S_1}(v_1)=L_S(v_1)$ contains at most $3$ edges,
contradicting the fact that $L_S(v_1)$ contains $4$ edges), and we see that $x$ is not joined
with $L_S(v_1)$. Therefore, $v_1$ is contained in two blocks of type
${\rm{II}}$ or ${\rm{III}}$. More precisely,
since valence of all neighbors of $v_1$ is at least two, $v_1$ is contained
in two blocks $B'$ and $B_1'$ of type ${\rm{II}}$.

Block $B'$ contains one of $w_1,w_2$ and one of $w_3,w_4$ (due to orientation, see Fig.~\ref{bl4_321} a)), assume that it contains $w_1$ and $w_3$. To avoid the edge $(w_1,w_3)$ which does not appear in $S$, another block $B_1'$ should contain $w_1$ and $w_3$. Since
$w_1$ and $w_3$ are joined with $v_2$, $B_1'$ is either of second or fourth type. In the latter case $v_2$ is a dead end of $B_1'$, but $v_2$ is joined with $w_2$ and $w_4$ also. Hence, $B_1'$ is a triangle containing
$v_2,w_1,w_3$, see Fig.~\ref{bl4_321} b). Similarly, $w_2,w_4$ and $v_2$ are contained in block $B_1''$ of second type. In particular, all the vertices of $L_S(v_1)$ are already contained in two blocks, so $x$ is not joined with $L_S(v_1)$.

\begin{figure}[!h]
\begin{center}
\psfrag{v1}{\tiny $v_1$}
\psfrag{v2}{\tiny $v_2$}
\psfrag{w1}{\tiny $w_1$}
\psfrag{w2}{\tiny $w_2$}
\psfrag{w3}{\tiny $w_3$}
\psfrag{w4}{\tiny $w_4$}
\psfrag{or}{\small or}
\psfrag{a}{\small a)}
\psfrag{b}{\small b)}
\epsfig{file=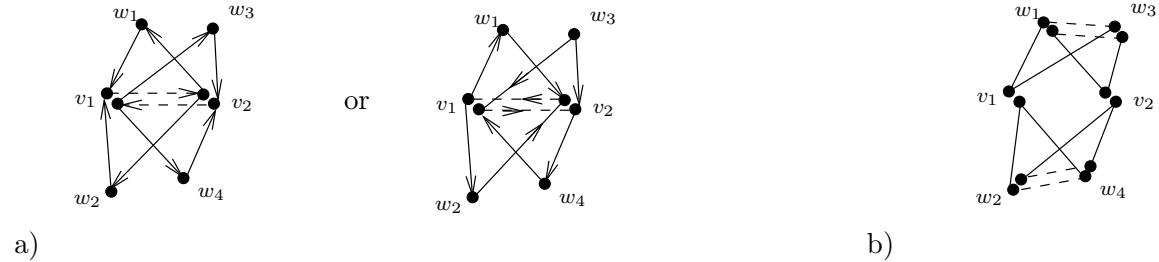,width=0.99\linewidth}
\caption{To the proof of Lemma~\ref{no4}, Case 3.2.1}
\label{bl4_321}
\end{center}
\end{figure}

\smallskip
\noindent
{\bf Case 3.2.2:} Block $B_1$ is of type ${\rm{I}}$.
\ Denote by $u$ the second vertex of $B_1$. If $u$ coincides with $v_2$, then $\l L_S(v_1),v_1\r$ has no outlets, so we can apply Proposition~\ref{razval1} to $x$. Now we may assume that $u\ne v_2$.

If $u$ is joined with $v_2$, then the edge $\l u, v_2\r$ must form a block of first type, otherwise
${{\rm{Val}}}_S(v_2)>{{\rm{Val}}}_S(v_1)$. Therefore, no vertex except $x$ is attached
to $\l L_S(v_1),v_1\r$. Since $|S|\ge 8$, in this case we can apply Proposition~\ref{razval1} to $x$.
Thus, we may assume that $u$ is not joined with $v_2$.

Take any vertex $y\notin\l L_S(v_1),v_1,x\r$ and consider $S_1=S\setminus y$ with some block decomposition. It is easy to see that $v_1$ should be contained in a block of type ${\rm{IV}}$. Furthermore, looking at $L_S(v_1)$ one can notice that there is
exactly one pair of leaves on distance two, namely $w_1$ and $w_2$, which implies that vertices $v_1,v_2,w_1,w_2$ belong to one block, and $w_1$ and $w_2$ are dead ends. Therefore, $x$ is not joined with $w_1$ and $w_2$, so only $v_1$ and $v_2$ are attached to $w_1$ and $w_2$.

Now we proceed as in Case~2.2.2.2. We consider quiver $S_1=S\setminus w_1$ with some block decomposition and show that $w_2,v_1,v_2$ compose a block of type ${\rm{II}}$. Since neither $w_1$ nor $w_2$ are not joined with any vertex of $S$ except
$v_1$ and $v_2$, we can replace triangle $v_1,v_2,w_2$ by block $v_1,v_2,w_1,w_2$ of type ${\rm{IV}}$ to obtain a block decomposition of $S$.

\medskip
\noindent
{\bf Case 4:}  ${\rm Val}_{S\setminus x}(v_1)=3$. \

\smallskip
\noindent
{\bf Case 4.1:} $x\not\perp v_1$, ${\rm Val}_S(v_1)=4$. \ Since ${\rm Val}_{S\setminus x}(v_2)\le {\rm Val}_{S\setminus x}(v_1)$, we have ${\rm Val}_{S\setminus x}(v_2)=3$.
This means that vertices of $B$ may be joined with $x$ only. Thus, we may apply Proposition~\ref{razval1} to $x$.

\smallskip
\noindent
{\bf Case 4.2:} $x\perp v_1$, ${\rm Val}_S(v_1)=3$. \  In this case vertex $v_1$
is contained either in block $B$ only, or in block $B$ and in second type block $B_1$.
Consider the two cases.

\smallskip
\noindent
{\bf Case 4.2.1:} Vertex $v_1$ is contained in one block. \ This implies that $v_1$ and $v_2$ are joined in $S$, so no vertex except $x$ is attached to $B$. We apply Proposition~\ref{razval1} to $x$.

\smallskip
\noindent
{\bf Case 4.2.2:} Vertex $v_1$ is contained in two blocks. \ In this case the second block $B_1$ is of type ${\rm{II}}$, it contains $v_1$, $v_2$ and some vertex $u$. The case is similar to Case~2.2.1, the difference is in orientation of $B_1$.

The union of $B$ and $B_1$ has a unique outlet $u$. Thus, $\l L_{S}(v_1),v_1\r\setminus u$ may be joined with $x$ only. Further, $x$ is not joined with $v_1$ and $v_2$ (since ${\rm{Val}}_S(v_2)\le {\rm{Val}}_S(v_1)$. If $x$ is not joined with $w_1$ and $w_2$ either, then we can apply Proposition~
\ref{razval1} to $u$. Therefore, we may assume that $x$ is joined with one of $w_1$ and $w_2$, say $w_1$. Moreover, we may assume that some vertex $y\notin\l B,u,x\r$ is joined with $u$, otherwise we
can apply Proposition~\ref{razval1} to $x$.

Now take any $z\notin\l B,u,x,y\r$ and consider $S_1=S\setminus z$ with some block decomposition. Vertex $v_1$ is contained either in a blocks of fourth and second type, or in blocks of second and first type. In the first case, due to orientations of edges
(see Fig.~\ref{bl4_422}) block of type ${\rm{IV}}$ should contain all vertices of $B$, which is impossible since dead end $w_1$ is joined with $x$. Hence, $v_1$ is contained in a triangle $B'$ and an edge $B_1'$. Again, because of orientations of edges,
$w_1$ and $w_2$ cannot belong to $B'$ simultaneously, so $B'$ contains $u$ and $w_i$. To avoid the edge $(w_i,u)$ these two vertices should be contained in some block $B_2'$. Since $u$ is joined with $v_2$ and $y$, $B_2'$ contains $v_2$ and $y$
also. Therefore, $B_2'$ is a block of fourth type with outlets $u,w_i$ and dead ends $v_2,y$. But this implies that $y$ attaches to $w_i$ which is impossible.

\begin{figure}[!h]
\begin{center}
\psfrag{v1}{\tiny $v_1$}
\psfrag{v2}{\tiny $v_2$}
\psfrag{w1}{\tiny $w_1$}
\psfrag{w2}{\tiny $w_2$}
\psfrag{wi}{\tiny $w_i$}
\psfrag{x}{\tiny $x$}
\psfrag{y}{\tiny $y$}
\psfrag{u}{\tiny $u$}
\psfrag{or}{\small or}
\psfrag{a}{\small a)}
\psfrag{b}{\small b)}
\epsfig{file=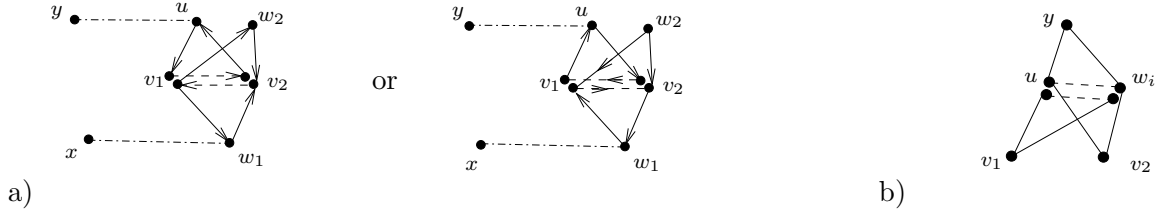,width=0.99\linewidth}
\caption{To the proof of Lemma~\ref{no4}, Case 4.2.2}
\label{bl4_422}
\end{center}
\end{figure}

\medskip
\noindent
{\bf Case 5:}  ${\rm Val}_{S\setminus x}(v_1)=2$. \   In this case vertex $v_1$ is contained in block $B$ and block $B_1$ of type ${\rm{I}}$, where $B_1$ is an edge joining $v_1$ and $v_2$ with suitable orientation. The union of $B$ and $B_1$ has no outlets, so we apply Proposition~\ref{razval1} to $x$.

\medskip

Clearly, valence of $v_1$ in $S\setminus x$ is at least two, so all cases are studied and the lemma is proved.

\end{proof}

\begin{cor}
\label{valle4}
Valence of any vertex $v$ of a minimal non-decomposable quiver  $S$ does not exceed $4$.

\end{cor}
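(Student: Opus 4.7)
The plan is to combine Lemmas \ref{no5} and \ref{no4} in a purely local counting argument, and then use the hypothesis $|S|\ge 8$ to upgrade a bound on $\mathrm{Val}_{S\setminus x}(v)$ to the desired bound on $\mathrm{Val}_S(v)$.

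First I would show that $\mathrm{Val}_{S\setminus x}(v)\le 4$ for every $v\in S$ and every $x\in S\setminus v$. By minimality, $S\setminus x$ is block-decomposable. By Lemmas \ref{no5} and \ref{no4}, any block decomposition of $S\setminus x$ uses only blocks of types $\mathrm{I}$, $\mathrm{II}$, $\mathrm{IIIa}$, $\mathrm{IIIb}$. Inspecting Figure \ref{bloki}, in each such block any single vertex is incident to at most two edges of that block (an outlet of a triangle or of $\B_{\mathrm{III}}$ has block-valence $2$; a dead end of $\B_{\mathrm{III}}$ and any vertex of $\B_{\mathrm{I}}$ has block-valence $1$). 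Since each vertex of a block-decomposable quiver belongs to at most two blocks of a given decomposition, the valence of $v$ in $S\setminus x$ is at most $2+2=4$.

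Next, I would transfer this to a bound in $S$ by choosing $x$ non-adjacent to $v$. The edge (if any) from $v$ to $x$ has weight $1$ or $2$ by Theorem \ref{less3}, so in general we only get $\mathrm{Val}_S(v)\le \mathrm{Val}_{S\setminus x}(v)+2\le 6$. In particular $v$ has at most six neighbours in $S$. Since $|S|\ge 8$ (the running assumption of this section), there exists some $x\in S\setminus v$ which is not joined to $v$. For this choice, $\mathrm{Val}_S(v)=\mathrm{Val}_{S\setminus x}(v)\le 4$, finishing the proof.

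The only subtle point is the per-block valence count in the first step, which is immediate once one reads off the block figures; there is no case analysis of the kind used in Lemmas \ref{no5} and \ref{no4}, because all the real work has already been done in showing that no $\B_{\mathrm{IV}}$ or $\B_{\mathrm{V}}$ blocks can appear. Thus the corollary is essentially a bookkeeping consequence of those two lemmas together with the order hypothesis $|S|\ge 8$.
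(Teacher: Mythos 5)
Your proof is correct and follows essentially the same route as the paper's (which is a one-line argument: pick a non-neighbour $x$, note that $v$ lies in at most two blocks of a decomposition of $S\setminus x$, and that after Lemmas~\ref{no5} and~\ref{no4} every vertex has valence at most two within each block). The only difference is that you also spell out why a non-neighbour $x$ exists, a point the paper leaves implicit.
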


\begin{proof}

The proof is evident. Indeed, take any $x$ which is not joined with $v$, and consider any block decomposition of $S\setminus x$. Vertex $v$ is contained in at most two blocks, valence of any vertex of blocks does not exceed two.

\end{proof}



\begin{lemma}
\label{no3}
Let $v\in S$ be a vertex of valence $4$. Then for any non-neighbor $x$ of $v$ and any block decomposition of $S\setminus x$ vertex $v$ is not contained in a block of third type.

\end{lemma}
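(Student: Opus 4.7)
Suppose for contradiction that some block decomposition of $S\setminus x$ contains a block $B$ of type~$\rm{III}$ with $v\in B$. Since $x\not\sim v$, we have ${\rm Val}_{S\setminus x}(v)={\rm Val}_S(v)=4$. A dead end of a type~$\rm{III}$ block lies in no other block (Remark~\ref{rmrk:DeadEndEdge}), so would contribute valence exactly $2$ at $v$, a contradiction. Hence $v$ is the outlet of $B$; denote its dead ends by $w_1,w_2$. By Remark~\ref{rmrk:DeadEndEdge}, $w_1$ and $w_2$ are attached in $S\setminus x$ only to $v$ and to each other. The remaining two edges at $v$ come from a second block $B_1\ni v$. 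By Lemmas~\ref{no4} and~\ref{no5}, $B_1$ is of type~$\rm{I}$, $\rm{II}$ or $\rm{III}$; type~$\rm{I}$ provides only one extra neighbour, so $B_1\in\B_{\rm{II}}\cup\B_{\rm{III}}$. Let $v_1,v_2$ be the remaining vertices of $B_1$, so that $L_S(v)=\{w_1,w_2,v_1,v_2\}$; moreover, if $B_1\in\B_{\rm{III}}$ then $v_1,v_2$ are also attached in $S\setminus x$ only to $v$ and to each other.

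The main tool is Proposition~\ref{razval1} applied to $b=v$. In $S\setminus v$, the pair $\{w_1,w_2\}$ is attached to the remaining vertices only through $x$ (since in $S\setminus x$ they were isolated beyond $v$). Thus, if neither $w_1$ nor $w_2$ is joined to $x$ in $S$, then $\{w_1,w_2\}$ is a connected component of $S\setminus v$ of size $2$; since $|S|\ge 8$, the other component has at least $n-3\ge 5$ vertices, so assumptions $(0)$--$(3)$ of Proposition~\ref{razval1} are met and $S$ is block-decomposable, contradicting minimality. The same argument with the roles of the pairs exchanged applies, when $B_1\in\B_{\rm{III}}$, if neither $v_1$ nor $v_2$ is joined to $x$.

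It remains to deal with the case where $w_1$ (say) is joined to $x$, and, when $B_1\in\B_{\rm{III}}$, also some $v_i$ is joined to $x$. The plan here is to extract further structural information by inspecting block decompositions of $S\setminus w_2$ (and, when needed, $S\setminus v_2$): in any such decomposition, $w_1$ has valence at most $2$ with neighbours $v$ and possibly $x$, so the edge $\langle v,w_1\rangle$ sits inside a block whose type is severely constrained by the edge $\langle v,w_2\rangle$ of $B$ and the edges of $B_1$. Checking all compatible possibilities forces the edges from $x$ into $\{w_1,w_2,v_1,v_2\}$ to assemble with $B$ and $B_1$ into a block of type~$\rm{II}$ or $\rm{IV}$ having $x$ as an outlet; gluing this block to a block decomposition of $S\setminus\{v,w_1,w_2,v_1,v_2\}$ together with the link at $x$ produces a block decomposition of $S$, once again contradicting minimality. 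The main obstacle is precisely this last step: it requires a careful case analysis over the type of $B_1$, over which subsets of $\{w_1,w_2,v_1,v_2\}$ are joined to $x$, and over the orientations of the resulting edges, to verify that in every configuration the local pieces really do combine into a legal block.
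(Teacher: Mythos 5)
Your reduction of the easy cases is sound and close in spirit to the paper's: when $x$ is joined to neither dead end of $B$, both dead ends become tiny components of $S\setminus v$ and Proposition~\ref{razval1} applied to $b=v$ finishes (the paper argues exactly this, noting the two dead ends are \emph{singleton} components --- a type~$\rm{III}$ block has no edge between its dead ends, so your parenthetical ``and to each other'' is factually wrong, though harmless here). You also miss the paper's first reduction: if nothing outside $\langle B,B_1\rangle$ attaches to the non-$v$ vertices of $B_1$, Proposition~\ref{razval1} applied to $x$ disposes of the configuration at once, and in particular this forces $B_1\in\B_{\rm{II}}$ before the hard case even begins, which is why the paper never has to carry the $B_1\in\B_{\rm{III}}$ alternative into the final analysis.

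The genuine gap is the remaining case, where $x$ is joined to a dead end of $B$. There you offer only a plan --- inspect decompositions of $S\setminus w_2$, force the local edges to ``assemble'' into a block of type $\rm{II}$ or $\rm{IV}$ at $x$, and glue it to a decomposition of the complement --- and you explicitly concede that verifying this in all configurations is ``the main obstacle.'' That concession is accurate: nothing in your argument guarantees that the complement $S\setminus\{v,w_1,w_2,v_1,v_2\}$ admits a block decomposition in which $x$ is an outlet compatible with both the new block and with all the other edges out of $x$. Establishing exactly this kind of compatibility is what Proposition~\ref{razval2} was built for, and the paper closes the case by applying it to $S=\langle S_1,b_1,b_2,S_2\rangle$ with $S_1$ the pair of dead ends of $B$, $b_1=v$, $b_2=x$ (these are non-adjacent by hypothesis), taking $a_1$ to be the dead end not joined to $x$ and choosing $a_2$ according to whether the two non-$v$ vertices of $B_1$ are joined in $S$. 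Without invoking Proposition~\ref{razval2} (or reproving its content), your outline does not constitute a proof of the lemma.
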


\begin{proof}
Denote by $v_1$ and $w_1$ dead ends of block $B$ of type ${\rm{III}}$ with outlet $v$, and denote by $v_2$ and $w_2$ vertices of block $B_1$ with outlet $v$. Clearly, $v_1$ and $w_1$ are not joined with $v_2$ and $w_2$. Denote $S'=\l
v_1,v_2,v,w_1,w_2\r$. If no vertex of $S\setminus \l S',x\r$ is joined with $v_2$ and $w_2$, then we apply Proposition~\ref{razval1} to $x$. Thus, we may assume that some vertex $u_2$ attaches to one of $v_2$ and $w_2$, say $v_2$, see Fig.~\ref{wr4bl3} a). In particular, this implies that $B_1$ is a block of type ${\rm{II}}$.

Suppose that $x$ is not joined with $v_1$ and $w_1$. Since ${\rm Val}_S(v)=4$, the quiver $S\setminus v$ has at most $4$ connected components. Two of them are $v_1$ and $w_1$. The remaining two (or one) contain at least $5$ vertices (due to $|S|\ge 8$), so at least one connected component has at least $3$ vertices. Hence, we can apply Proposition~\ref{razval1} to $v$.

Therefore, we assume that $x$ attaches to at least one of $v_1$ and $w_1$, say $w_1$. We want to prove that $S$ is block-decomposable by applying Proposition~\ref{razval2} to $S=\left\l S_1=\l v_1,w_1\r,b_1=v,b_2=x,S_2=S\setminus\l S_1,v,x\r\right\r$, see Fig.~\ref{wr4bl3}b. For this take  $a_1=v_1$, and try to choose $a_2$. The choice of $a_2$ will depend on $\Theta_{S\setminus x}(v)$.

If $v_2$ and $w_2$ are joined in $S$, then we choose from non-attached to $x$ vertices of $S_2$ (if they do exist) those which is at maximal distance from $v$ in $S$. Clearly, such a vertex can be taken as $a_2$. If each vertex of $S_2$ is joined with $x$, we take as $a_2$ any vertex of $S_2\setminus\l v_2,w_2\r$.

\begin{figure}[!h]
\begin{center}
\psfrag{v}{\tiny $v$}
\psfrag{v1}{\tiny $v_1$}
\psfrag{v2}{\tiny $v_2$}
\psfrag{w1}{\tiny $w_1$}
\psfrag{w2}{\tiny $w_2$}
\psfrag{u2}{\tiny $u_2$}
\psfrag{x}{\tiny $x$}
\psfrag{a1=v1}{\tiny $a_1=v_1$}
\psfrag{u}{\tiny $u$}
\psfrag{c}{\small c)}
\psfrag{a}{\small a)}
\psfrag{b}{\small b)}
\epsfig{file=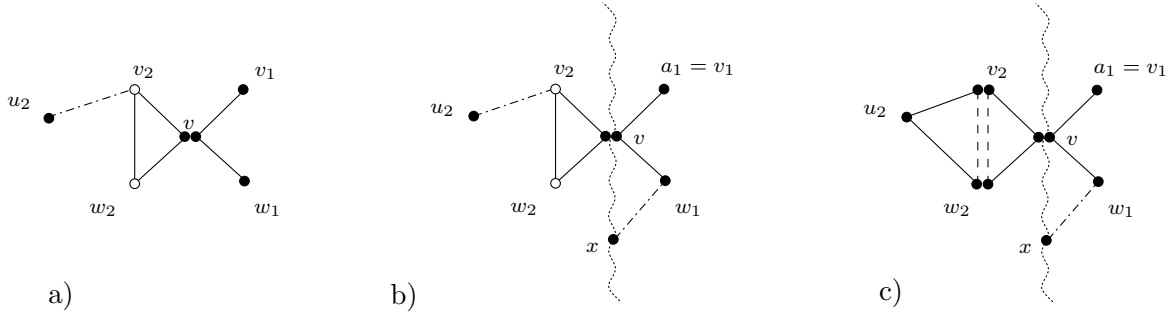,width=0.99\linewidth}
\caption{To the proof of Lemma~\ref{no3}}
\label{wr4bl3}
\end{center}
\end{figure}

Now suppose that $v_2$ and $w_2$ are not joined in $S$ (in particular, $\Theta_{S\setminus x}(v)$ contains also some block $B_2$ composed by $v_2,w_2$ and probably some other vertex of $S_2$). $B_2$ cannot be of first type since $v_2$ attaches to $u_2$. Thus, $w_2$ is joined with $u_2$,  see Fig.~\ref{wr4bl3} c). Moreover, no vertex of $S\setminus x$ attaches to any of $v_2$ and $w_2$
since both of them belong already to two blocks of considered block decomposition of $S\setminus x$. So, $S\setminus v_2$ is connected and we may take $a_2=v_2$.

\end{proof}

\begin{lemma}
\label{wrong4}
Let $v\in S$ be a vertex of valence $4$. Then for any non-neighbor $x$ of $v$ and any block decomposition of $S\setminus x$ the diagram $\Theta_{S\setminus x}(v)$ consists of exactly two blocks of type ${\rm{II}}$ having the only vertex $v$ in common.

\end{lemma}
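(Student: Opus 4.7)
The plan is to reduce to a small list of possible block configurations at $v$ using the lemmas proved so far in Section~\ref{minimal}, and then to rule out all configurations except the claimed one by combining a valence count with the decomposition propositions of Section~\ref{blockdecomp}.

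First, since $x$ is not a neighbor of $v$ we have ${\rm Val}_{S\setminus x}(v)={\rm Val}_S(v)=4$ (the upper bound being Corollary~\ref{valle4}). Any block decomposition of $S\setminus x$ avoids blocks of types ${\rm{IV}}$ and ${\rm{V}}$ by Lemmas~\ref{no4} and~\ref{no5}, and $v$ does not lie in any block of type ${\rm{III}}$ by Lemma~\ref{no3}. Hence every block containing $v$ is of type ${\rm{I}}$ or ${\rm{II}}$, and since outlets are matched pairwise in the definition of block-decomposability, $v$ lies in at most two blocks. A type ${\rm{I}}$ block contributes $1$ to the valence of $v$, a type ${\rm{II}}$ block contributes $2$, and cancellations or doublings of edges can only preserve or decrease the total valence. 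The only way to reach valence $4$ with at most two blocks is for $v$ to lie in exactly two blocks $B_1,B_2$, both of type ${\rm{II}}$.

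Next I would establish that $B_1$ and $B_2$ share only the vertex $v$. Write $B_1=(v,p_1,p_2)$ and $B_2=(v,q_1,q_2)$. If they shared a second vertex, say $p_1=q_1=w$, the two copies of the edge $v w$ in $B_1$ and $B_2$ either cancel (reducing ${\rm Val}_{S\setminus x}(v)$ to $2$, a contradiction) or combine into a double edge. In the second sub-case $v$ has only three neighbors $\{w,p_2,q_2\}$ in $S\setminus x$, the vertex $w$ has valence $4$ in $S$ as well, and by Corollary~\ref{valle4} $x$ is non-adjacent to $w$. The four-vertex subquiver $\langle v,w,p_2,q_2\rangle$ is then attached to the rest of $S\setminus x$ only through outlets of $B_1, B_2$ that are already contained in two blocks each, and since $|S|\ge 8$ one can verify the hypotheses of Proposition~\ref{razval1} at a suitable cut vertex, yielding a block decomposition of $S$ and contradicting non-decomposability. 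The sub-case where $B_1$ and $B_2$ share two additional vertices is even more rigid and handled analogously.

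Finally I would exclude the presence of any further block $B_3$ of the decomposition meeting $L_{S\setminus x}(v)=\{p_1,p_2,q_1,q_2\}$ in at least two vertices. Such $B_3$ must be of type ${\rm{I}}$, ${\rm{II}}$, or ${\rm{III}}$. When $B_3\cap L=\{p_1,p_2\}$ (symmetrically for $\{q_1,q_2\}$), the edge $p_1 p_2$ is shared with $B_1$, so $B_3$ is either a type ${\rm{I}}$ block $(p_1,p_2)$ which cancels or doubles this edge, or a triangle containing a third vertex outside $L$; in either sub-case the four vertices $p_1,p_2,q_1,q_2$ can connect to the remainder of $S\setminus x$ only via outlets already contained in two blocks, and Proposition~\ref{razval1} applied at an appropriate cut vertex produces a block decomposition of $S$. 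When $B_3\cap L=\{p_i,q_j\}$, the edge $p_i q_j$ closes a $4$-cycle through $v$, and a similar cut-vertex analysis, together with Corollary~\ref{after2}, yields the decomposition.

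The main obstacle will be this last step: although each sub-case of an alleged $B_3$ is conceptually handled by Proposition~\ref{razval1} or Proposition~\ref{razval2}, one has to track carefully which outlets of $B_1, B_2, B_3$ are already used, which vertices of $S$ can be joined to $x$, and whether the orientations of the edges at $v$ are mutually compatible, before the hypotheses $(0)$--$(3)$ of Proposition~\ref{razval1} (or $(2a)$--$(2c)$ of Proposition~\ref{razval2}) can be verified. As in the proof of Lemma~\ref{no4}, a few residual small-quiver configurations may have to be ruled out directly using Keller's applet via Theorem~\ref{less3}.
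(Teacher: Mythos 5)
Your opening reduction is sound and matches the paper's: Lemmas~\ref{no5}, \ref{no4} and~\ref{no3} eliminate blocks of types ${\rm{V}}$, ${\rm{IV}}$ and ${\rm{III}}$ at $v$, and the valence count then forces $v$ to lie in exactly two blocks of type ${\rm{II}}$. The gap is in the two case analyses that follow, where your key structural claim --- that the neighbors of $v$ can be attached to the rest of $S\setminus x$ only through outlets already contained in two blocks --- is false in precisely the configurations that carry the difficulty. In the shared-vertex case ($B_1=(v,w,p_2)$, $B_2=(v,w,q_2)$ with $\l v,w\r$ a double edge), only $v$ and $w$ are saturated; $p_2$ and $q_2$ each lie in a single block and remain available outlets. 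The hard subcase is when $p_2$ and $q_2$ are not joined to each other and the rest of $S$ attaches to both of them: then there is no single cut vertex and Proposition~\ref{razval1} does not apply; the paper must invoke Corollary~\ref{after2} with $S_1=\l v,w\r$, $b_1=p_2$, $b_2=q_2$. Likewise, when an extra block $B_3$ meets $L_{S\setminus x}(v)$, the union $\Theta_{S\setminus x}(v)$ generally retains an unsaturated outlet (the fourth neighbor $u_2$ when $B_3$ is the triangle on $v_1,u_1,v_2$, or a new vertex $w\notin L_{S\setminus x}(v)$ when $B_3$ sticks out of the link), so the cut-vertex propositions are again not directly available.

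In those residual configurations the paper does not conclude by a cut-vertex argument at all. It picks an auxiliary vertex $y$ (sometimes two, $y$ and $z$), compares the block decompositions forced on $S\setminus y$ and $S\setminus z$ with the given one of $S\setminus x$, and either derives a contradiction (an adjacency that cannot exist, or disconnectedness of $S$) or explicitly reassembles a block decomposition of all of $S$ by swapping a type ${\rm{II}}$ block for a type ${\rm{IV}}$ block, or an edge for a triangle --- contradicting non-decomposability of $S$. None of this is captured by ``apply Proposition~\ref{razval1} at an appropriate cut vertex,'' and it is exactly where the work of the lemma lies. Your closing paragraph rightly anticipates that the bookkeeping is delicate, but the mechanism you propose for closing the cases would fail on the configurations above.
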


\begin{proof}

Lemmas~\ref{no3},~\ref{no4}, and~\ref{no5}
rule out blocks of types $\rm{III},\rm{IV}$, and $\rm{V}$ from
decomposition of ${S\setminus x}$. The only possibility left is that
$v_1$ is contained in two blocks $B_1$ and $B_2$ of second type.
Clearly, they have at most $4$ outlets, so $\Theta_{S\setminus x}(v)$ may contain at most $2$ additional blocks. Denote by $v_1,u_1$ and $v_2,u_2$ remaining vertices of $B_1$ and $B_2
$ respectively, and consider the following cases (see Table~\ref{wrongval4}).

\begin{table}[!h]
\begin{center}
\caption{To the proof of Lemma~\ref{wrong4}}
\label{wrongval4}
\psfrag{q}{$)$ }
\psfrag{1}{\bf 1}
\psfrag{1.}{\scriptsize $|B_1\cap B_2|\ge 2$}
\psfrag{2}{\bf 2}
\psfrag{2.}{\scriptsize $|B_1\cap B_2|=1$}
\psfrag{1.1}{\small \bf 1.1}
\psfrag{1.1.} {\scriptsize $|B_1\cap B_2|=2$}
\psfrag{1.2}{\small \bf 1.2}
\psfrag{1.2.}{\scriptsize $|B_1\cap B_2|=3$}
\psfrag{2.0}{\tiny  $\Theta_{S\setminus x}(v)=B_1\cup B_2$}
\psfrag{2.1}{\small \bf 2.1}
\psfrag{2.1.}{\tiny $\Theta_{S\setminus x}(v)=B_1\cup B_2\cup B_3$}
\psfrag{2.2}{\small \bf 2.2}
\psfrag{2.2.}{\tiny $\Theta_{S\setminus x}(v)=B_1\cup B_2\cup B_3\cup B_4$}
\psfrag{2.1.1}{\small {\bf 2.1.1}}
\psfrag{2.1.2}{\small {\bf 2.1.2}}
\psfrag{2.2.1}{\small {\bf 2.2.1}}
\psfrag{2.2.2}{\small {\bf 2.2.2}}
\psfrag{2.2.3}{\small {\bf 2.2.3}}
\psfrag{2.2.1.1}{\scriptsize {\bf 2.2.1.1}}
\psfrag{2.2.1.2}{\scriptsize {\bf 2.2.1.2}}
\psfrag{2.1.2.1}{\scriptsize {\bf 2.1.2.1}}
\psfrag{2.1.2.2}{\scriptsize {\bf 2.1.2.2}}
\psfrag{2.1.1.1}{\scriptsize {\bf 2.1.1.1}}
\psfrag{2.1.1.2}{\scriptsize {\bf 2.1.1.2}}
\psfrag{2.1.1.3}{\scriptsize {\bf 2.1.1.3}}
\psfrag{B1}{\tiny $B_1$}
\psfrag{B2}{\tiny $B_2$}
\psfrag{B3}{\tiny $B_3$}
\psfrag{B4}{\tiny $B_4$}
\psfrag{v}{\tiny $v$}
\psfrag{w3}{\tiny $w_3$}
\psfrag{w4}{\tiny $w_4$}
\psfrag{B3=}{\scriptsize $B_3=$}
\psfrag{B4=}{\scriptsize $B_4=$}
\psfrag{2.2.1.1.1}{{\tiny  \bf 2.2.1.1.1}}
\psfrag{2.2.1.1.2}{{\tiny  \bf 2.2.1.1.2}}
\psfrag{2.2.1.2.1}{{\tiny  \bf 2.2.1.2.1}}
\psfrag{2.2.1.2.2}{{\tiny  \bf 2.2.1.2.2}}
\psfrag{=}{\tiny ($w_3=w_4$)}
\psfrag{ne}{\tiny ($w_3\ne w_4$)}
\epsfig{file=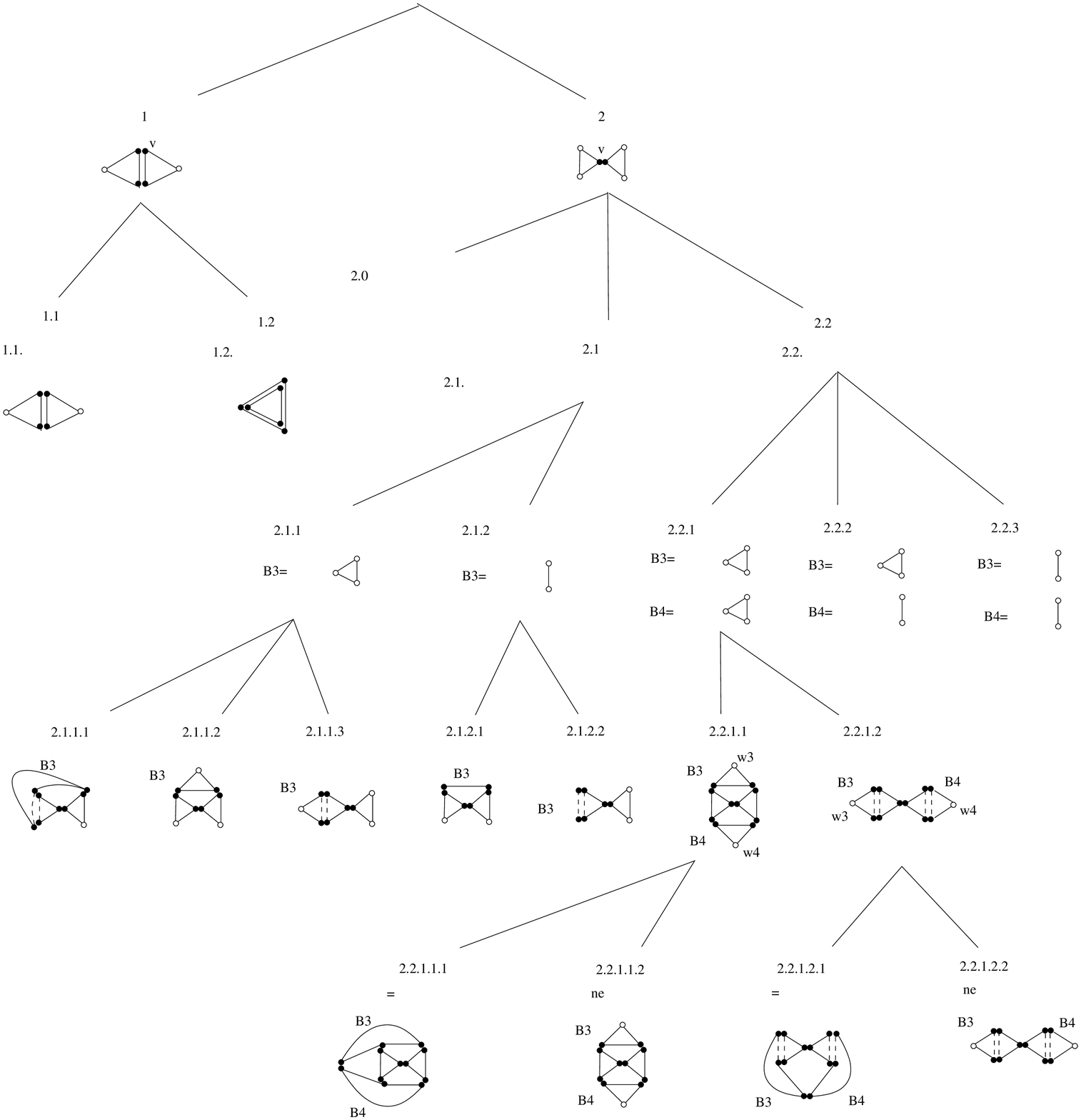,width=1.0\linewidth}
\end{center}
\end{table}

\medskip
\noindent
{\bf Case 1:} $B_1$ and $B_2$ have at least two vertices in common. \

\smallskip
\noindent
{\bf Case 1.1:} $B_1$ and $B_2$ have exactly two vertices in common.
\ We may assume that $v_1=v_2$.
Then ${\rm Val}_S(v)={\rm Val}_S(v_1)=4$ is the maximal possible
valence in $S$, so $x$ is not attached to $v$ and $v_2$.
Consider  all options for $\Theta_{S\setminus x}(v)$ (see Fig.~\ref{wr4_11}).

\begin{figure}[!h]
\begin{center}
\psfrag{v}{\tiny $v$}
\psfrag{v1}{\tiny $v_1=v_2$}
\psfrag{w}{\tiny $w$}
\psfrag{u1}{\tiny $u_1$}
\psfrag{u2}{\tiny $u_2$}
\epsfig{file=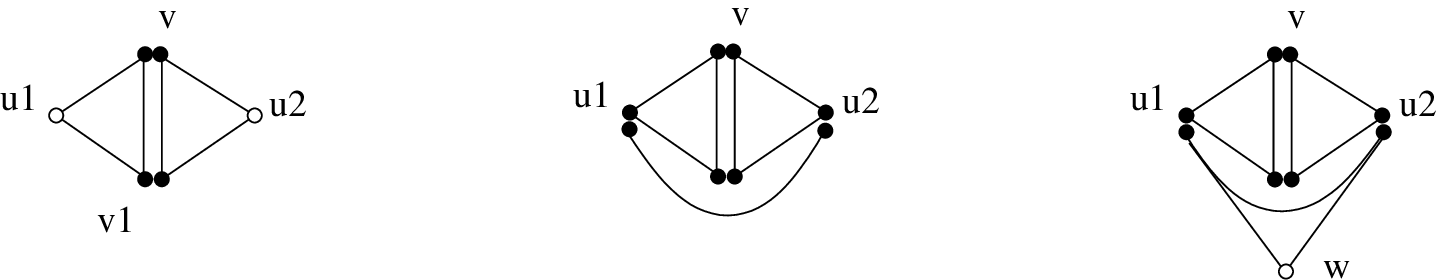,width=0.85\linewidth}
\caption{To the proof of Lemma~\ref{wrong4}, Case~1.1}
\label{wr4_11}
\end{center}
\end{figure}

If $u_1$ and $u_2$ are not joined
(i.e. $\Theta_{S\setminus x}(v)$ consists of $B_1$ and $B_2$ only,
see the left picture of Fig.\ref{wr4_11}), then $S$ is block-decomposable by Corollary~\ref{after2} applied to $S\!=\!\left\l S_1\!=\!\l v,v_2\r,b_1\!=\!u_1,b_2\!=\!u_2,S_2\!=\!S\setminus\l S_1,u_1,u_2\r\right\r$  with $c_1=v$.

If the edge $\l u_1, u_2\r$ forms a block of first type
(see Fig.\ref{wr4_11}, the middle picture),
then they can be connected only with vertex $x\in S$. Hence,
 all the vertices of $\Theta_{S\setminus x}(v)$ are dead ends, so $S$ is block-decomposable by Proposition~\ref{razval1} applied to $x$.

If $u_1$ and $u_2$ are contained in a block of second type
with additional vertex $w$ (see Fig.\ref{wr4_11}, right),
then $u_1$ and $u_2$ have valence $4$, so they are not joined with $x$. Therefore, $S$ is block-decomposable by Proposition~\ref{razval1} applied to $w$.

\smallskip
\noindent
{\bf Case 1.2:} $B_1$ and $B_2$ have three vertices in common. \ In this case union of $B_1$ and $B_2$ has no outlets, so $S$ is block-decomposable by Proposition~\ref{razval1} applied to $x$.

\medskip
\noindent
{\bf Case 2:} $B_1$ and $B_2$ intersect at $v$ only.
\ The quiver $\Theta_{S\setminus x}(v)$ consists of two, three, or four blocks. We are going to prove that there are no
other blocks in $\Theta_{S\setminus x}(v)$
except $B_1$ and $B_2$, this will imply
our lemma. For that we consider the two remaining cases
and find a contradiction.

\smallskip
\noindent
{\bf Case 2.1:}
$\Theta_{S\setminus x}(v)$ consists of three blocks $B_1$, $B_2$ and $B_3$. \ We go through different types of $B_3$ and the way it attaches to $B_1$ and $B_2$.

\smallskip
\noindent
{\bf Case 2.1.1:} $B_3\in \B_{\rm{II}}$ \

\smallskip
\noindent
{\bf Case 2.1.1.1:} $B_3$ has $3$ points in common with the union of $B_1$ and $B_2$. \ Let $v_1,u_1,v_2$ be vertices of $B_3$.
Either $v_1$ and $u_1$ are joined by a double edge or they are no joined at all. If they are joined by a double edge, then valence of $u_1$ equals $4$, so we are in assumptions of Case~1, which implies that $S$ is block-decomposable. Hence, we may assume that
$v_1$ and $u_1$ are not joined in $S$. Thus,  $\Theta_{S\setminus x}(v)$ is the quiver shown on Fig.~\ref{wr4_2111}. The only outlet is $u_2$. We may assume that some $y\notin \l \Theta_{S\setminus x}(v),x\r$ is joined with $u_2$, otherwise $S$ is block-decomposable by Proposition~\ref{razval1} applied to $x$.

\begin{figure}[!h]
\begin{center}
\psfrag{v}{\tiny $v$}
\psfrag{y}{\tiny $y$}
\psfrag{B3}{\tiny $B_3$}
\psfrag{v1}{\tiny $v_1$}
\psfrag{v2}{\tiny $v_2$}
\psfrag{u1}{\tiny $u_1$}
\psfrag{u2}{\tiny $u_2$}
\epsfig{file=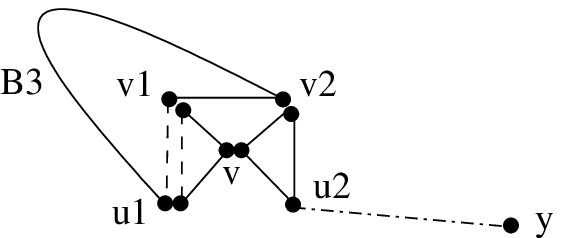,width=0.305\linewidth}
\caption{To the proof of Lemma~\ref{wrong4}, Case~2.1.1.1}
\label{wr4_2111}
\end{center}
\end{figure}

Consider any $z\notin\l \Theta_{S\setminus x}(v),x,y\r$ and some
block decomposition of $S\setminus z$.
By Lemmas~\ref{no3},~\ref{no4}, and~\ref{no5}, $v$ is contained in two blocks of second type.
One of these blocks contains $v,v_2$ and one of $u_1,u_2,v_1$. Another one
contains $v$ and two remaining vertices from $u_1,u_2,v_1$. Similarly, $v_2$ is also contained in block of second type with vertices $v_2$ and two of $u_1,u_2,v_1$. This imply that only two of $u_1,u_2,v_1$ are dead ends
of $\l v,v_1,v_2,u_1,u_2\r$, and only one of them is outlet.

Note that no other vertex than $u_1,u_2,v_1,v$ is joined with $v_2$ otherwise ${\rm{Val}}_S(v_2)>4$.
If $u_2$ is an outlet, then $x$ may be joined with $u_2$ only in $\Theta_{S\setminus x}(v)$, so $S$ is block-decomposable by Proposition~\ref{razval1} applied to $u_2$.

If $u_1$ or $v_1$ is an outlet, then $u_2$ is dead end, but $y$ is attached to $u_2$, so we get a contradiction.

\smallskip
\noindent
{\bf Case 2.1.1.2:} $B_3$ has exactly $1$ point in common with each of $B_1$ and $B_2$. \ We may assume that vertices of $B_3$ are $v_1,v_2$ and $w$. Since ${\rm Val}_S(v_1)={\rm Val}_S(v_2)=4$, no vertex of $S\setminus\l \Theta_{S\setminus x}(v),w\r$ is
joined with $v_1$ or $v_2$.

Consider the quiver $S_1=S\setminus v_1$ with a block decomposition. Since ${\rm Val}_{S_1}(v)=3$ and $v_2$ is joined with $u_2$ in $S$, $v$ is contained in one block of second type and in one of first type. But $u_1$ is joined neither with $v_2$ nor with $u_2$,
so $v,v_2,u_2$ are vertices of one block, and $v,u_1$ compose another one block. Looking at vertex $v_2$ we see that $v_2$ and $w$ compose a block of first type, too. Replace the block $v_2 w$ by $B_3$, and block  $v u_1$ by $B_1$, and obtain a block decomposition of $S$.

\smallskip
\noindent
{\bf Case 2.1.1.3:} $B_3$ does not intersect one of $B_1$ and $B_2$. \ In this case we may assume that vertices of $B_3$ are $v_1,u_1$ and $w$. Similarly to Case~2.1.1.1, we conclude that either this situation is already considered in Case 1, or $v_1$ and $u_1$ are not joined in $S$. Take any $y\notin \l \Theta_{S\setminus x}
(v),x\r$ and consider $S_1=S\setminus y$.
Since $v_1$ and $u_1$ are not joined with $v_2$ and $u_2$, vertices $v,v_2$ and $u_2$ compose one block
(otherwise in order to cancel two edges joining $\l v_1,u_1\r$ with $\l v_2,u_2\r$ we have to glue in two blocks such that neither of them contains simultaneously $u_2$ and
$v_2$. Then both $u_2$ and $v_2$ are dead ends with no edge between them contradicting the fact $u_2$ and $v_2$ are joined in $S$). Therefore, $v,v_1$ and $u_1$ compose a block, so $v_1,u_1$ and $w$ also form a block. In particular, $v_1$ and $u_1$ are dead ends of $\l u_1,w,v_1,v\r$, and $x$ is not
attached to  $v_1$ and $u_1$. Recall also that no vertex except $x$, $v$, $w$ could be attached to $v_1$ and $u_1$ since $v_1$ and $u_1$ are dead ends of $\Theta_{S\setminus x}(v)$. Hence, both $u_1$ and $v_1$ are joined with $v$ and $w$ only.

Consider $S_2=S\setminus v_1$ with some block decomposition. Similarly to Case~2.1.1.2, it easy to see that $v,v_2,u_2$ compose one block of type {\rm{II}}, and $u_1,v$ form another block of type {\rm{I}}. Since $u_1$ is not joined with any vertex except $v$ and $w$,
$\l u_1,w\r$ is a block. Notice that blocks $\l u_1,w,v_1 \r$ and $\l u_1,v_1,v\r$ are oriented so that sides $\overrightarrow{v_1 u_1}$ of one triangle cancels out with the side $\overrightarrow{u_1 v_1}$ of the other. This yields that one of the edges
$\l u_1, w\r$ and $\l u_1, v\r$ is directed towards $u_1$ while the other is directed from $u_1$. Replacing $(u_1,w)$ by $B_3$, and $(u_1,v)$ by $B_1$, we obtain a block decomposition of $S$.

\smallskip
\noindent
{\bf Case 2.1.2:}
$B_3\in \B_{\rm{I}}$. \ There are two possibilities to attach $B_3$ to $B_1$ and $B_2$.

\smallskip
\noindent
{\bf Case 2.1.2.1:} $B_3$ has exactly one point in common with each of $B_1$ and $B_2$. \ We may assume that vertices of $B_3$ are $v_1$ and $v_2$ (see Fig.~\ref{wr4_2121}a). If $x$ is not joined with $v_1$ and $v_2$, then $S$ is block-decomposable by Corollary~\ref{after2} applied to
$S=\left\l S_1=\l v,v_1,v_2\r,b_1=u_1,\right.$ $\left.b_2=u_2,S_2=S\setminus\l S_1,u_1,u_2\r\right\r$ with $c_1=v_1$.  So, we may assume that $x$ is joined with at least one of $v_1$ and $v_2$, say $v_1$.

Take any $y\notin \l \Theta_{S\setminus x}(v),x\r$ and consider $S_1=S\setminus y$. Valence of $v_1$ is equal to four, which means that $v_1$ is contained in two blocks of second type. Since $v_1$ is joined with $v$, one of these blocks (call it $B'$) contains both $v_1$ and $v$. The third vertex of $B'$ is either $v_2$ or $u_1$ (since ${\rm{Val}}_S(v)={\rm{Val}}_S(v_1)=4$ is the maximal possible and only $u_1$ and $v_2$ are joined with both $v_1$ and $v$).

If $B'$ contains $v_2$, then vertices $u_1,v_1,x$ compose one block. The second block containing $v$ is composed by $v,u_1,u_2$. In particular, $u_1$ is joined with $u_2$, which contradicts the assumption, see Fig.~\ref{wr4_2121} b) . Therefore, $B'$ is composed by $u_1,v_1$
and $v$. The second block containing $v$ is composed by $v,v_2,u_2$, and the second block containing $v_1$ is composed by $v_1,v_2,x$,  see Fig.~\ref{wr4_2121} c) We obtain a quiver considered above in Case~2.1.1.2.

\begin{figure}[!h]
\begin{center}
\psfrag{a}{\scriptsize a)}
\psfrag{b}{\scriptsize b)}
\psfrag{c}{\scriptsize c)}
\psfrag{v1}{\tiny $v_1$}
\psfrag{v2}{\tiny $v_2$}
\psfrag{v}{\tiny $v$}
\psfrag{x}{\tiny $x$}
\psfrag{w4}{\tiny $w_4$}
\psfrag{u1}{\tiny $u_1$}
\psfrag{u2}{\tiny $u_2$}
\epsfig{file=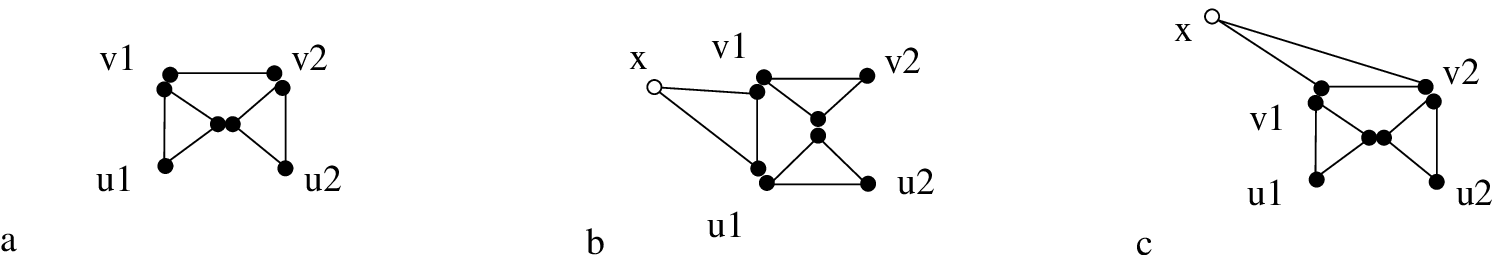,width=0.85\linewidth}
\caption{To the proof of Lemma~\ref{wrong4}, Case~2.1.2.1}
\label{wr4_2121}
\end{center}
\end{figure}

\smallskip
\noindent
{\bf Case 2.1.2.2:} $B_3$ does not intersect one of $B_1$ and $B_2$. \ The proof repeats the proof of Lemma~\ref{no3}.

\smallskip
\noindent
{\bf Case 2.2:}
$\Theta_{S\setminus x}(v)$ consists of four blocks $B_1$, $B_2$, $B_3$, and $B_4$.
\ We go through all the different types of $B_3$ and $B_4$ and all the ways to assemble $\Theta_{S\setminus x}(v)$ from them.

\smallskip
\noindent
{\bf Case 2.2.1:} Both $B_3\in\B_{\rm{II}}$ and $B_4\in\B_{\rm{II}}$.
\ There are two possibilities to attach $B_3$ and $B_4$ to $B_1$ and $B_2$.

\smallskip
\noindent
{\bf Case 2.2.1.1:} Each of $B_3$ and $B_4$ has exactly one vertex in common with each of $B_1$ and $B_2$. \ Denote by $w_3$ and $w_4$ the remaining vertices of $B_3$ and $B_4$ respectively, and consider two possibilities.

\smallskip
\noindent
{\bf Case 2.2.1.1.1:} Vertices $w_3$ and $w_4$ coincide. \ In this case valences of all the $6$ vertices of $\Theta_{S\setminus x}(v)$ are equal to $4$ yielding that
$\l\Theta_{S\setminus x}(v)\r\perp (S\setminus\l\Theta_{S\setminus x}(v)\r)$ and $S$
is not connected.

\smallskip
\noindent
{\bf Case 2.2.1.1.2:} Vertices $w_3$ and $w_4$ do not coincide. \ We may assume that $B_3$ contains vertices $w_3,u_1$ and $u_2$. Consider $S_1=S\setminus u_1$ with some block decomposition.  Since valences of $v,v_1,u_1,v_2,u_2$ are equal
to $4$, no vertex from $S\setminus\Theta_{S\setminus x}(v)$ attaches to any of these $5$ vertices.  We want to prove that edges $\l w_3,u_2\r$ and $\l v_1,v\r$ are blocks of first type of $S_1$, see Fig.~\ref{wr4_22112}. Then replacing $(w_3,u_2)$ and $(v_1,v)$ by $B_3$ and $B_1$ respectively we get
a block decomposition of $S$.

\begin{figure}[!h]
\begin{center}
\psfrag{a}{\scriptsize a)}
\psfrag{b}{\scriptsize b)}
\psfrag{v}{\tiny $v$}
\psfrag{x}{\tiny $x$}
\psfrag{v1}{\tiny $v_1$}
\psfrag{v2}{\tiny $v_2$}
\psfrag{w3}{\tiny $w_4$}
\psfrag{w4}{\tiny $w_3$}
\psfrag{u1}{\tiny $u_1$}
\psfrag{u2}{\tiny $u_2$}
\epsfig{file=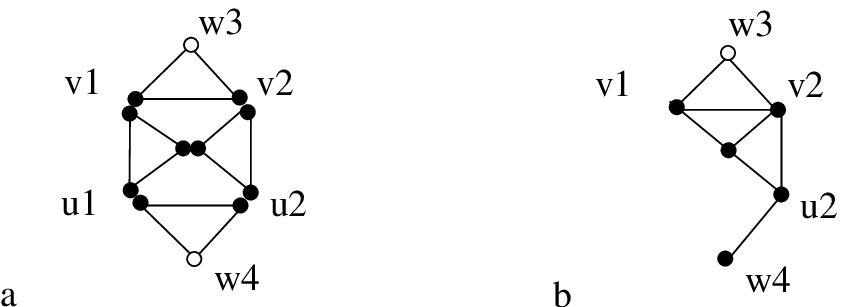,width=0.45\linewidth}
\caption{To the proof of Lemma~\ref{wrong4}, Case~2.2.1.1.2}
\label{wr4_22112}
\end{center}
\end{figure}

Since ${\rm Val}_{S_1}(u_2)=3$, $u_2$ is contained in one block of second type and one of first type. The edge $\l v_2,u_2\r$ belongs to block of type ${\rm{II}}$ because of ${\rm Val}_{S_1}(v_2)=4$. By the same reason, $w_3$ and $v_2$ are not contained in one
block. Therefore, $v_2,v,u_2$ compose one block, and $w_3,u_2$ compose another one block. Looking at vertex $v_2$ we see that the second block containing $v_2$ is spanned by $v_2,v_1$ and $w_4$. Recall that vertices $v$ and $v_1$ are not
joined with any vertex of $S_1$ except $w_3,v_2$ and $u_2$. Thus, $v$ and $v_1$ compose a block, which completes the case.

\smallskip
\noindent
{\bf Case 2.2.1.2:} One of $B_3$ and $B_4$ (say $B_3$) does not intersect $B_2$ and the other (namely, $B_4$) does not intersect $B_1$. \ Denote by $w_3$ and $w_4$ the remaining vertices of $B_3$ and $B_4$ respectively. Taking into account Case~1, we may assume that $u_2$ is not joined with $v_2$ in $S$, and $u_1$ does not attach to $v_1$. We consider two possibilities.

\smallskip
\noindent
{\bf Case 2.2.1.2.1:} Vertices $w_3$ and $w_4$ coincide. \ Notice that each vertex of $\Theta_{S\setminus x}(v)$ is already contained in two blocks, so no vertex of $S\setminus\l\Theta_{S\setminus x}(v)\r$ except $x$ is attached to $\l\Theta_{S\setminus x}(v)\r$.
To show that $x$ is not joined with $\l\Theta_{S\setminus x}(v)\r$ either, take any $y\notin\l\Theta_{S\setminus x}(v),x\r$ (such $y$ does exist since $|S|\ge 8$) and consider $S\setminus y$ with some block decomposition.
Valences of $v$ and $w_3$ are equal to $4$, so they belong to two blocks of second type each. Further, suppose that some pair of $u_1,u_2,v_1,v_2$ compose a block with $w_3$. To avoid an
edge between this pair of vertices, they should compose also a block with $v$.  Therefore, each vertex of $\Theta_{S\setminus x}(v)$ is contained in two blocks, so no vertex of $S\setminus\l\Theta_{S\setminus x}(v)\r$ except $y$ attaches to $\l\Theta_{S\setminus x}(v)\r$. In particular, $x\perp \l\Theta_{S\setminus x}(v)\r$.

Thus, $S\setminus\l\Theta_{S\setminus x}(v)\r\perp \l\Theta_{S\setminus x}(v)\r$, and $S$ is not connected.

\smallskip
\noindent
{\bf Case 2.2.1.2.2:} Vertices $w_3$ and $w_4$ do not coincide. \ Clearly, no vertex of $(S\setminus x)\setminus\Theta_{S\setminus x}(v)$ is joined with $v$ or any of its neighbors, see Fig.~\ref{wr4_22122} a). Suppose that $x$ is joined with any of neighbors of $v$, say with $u_1$. Consider
$S_1=S\setminus w_4$ with some block decomposition. Since $u_1$ does not attach to any of neighbors of $v$ and ${\rm Val}_{S_1}(u_1)=3$, the edge $\l u_1,v\r$ is not contained in block of second type. However,
 ${\rm{Val}}_{S_1}(v)=4$
 so the
edge $\l u_1,v\r$ should be contained in some block of second type. The contradiction shows that $x\perp u_1$. Similarly, $x$ is not joined with  any other neighbor of $v$ (and with $v$ itself, of course).

\begin{figure}[!h]
\begin{center}
\psfrag{a}{\scriptsize a)}
\psfrag{b}{\scriptsize b)}
\psfrag{v}{\tiny $v$}
\psfrag{x}{\tiny $x$}
\psfrag{v1}{\tiny $v_1$}
\psfrag{v2}{\tiny $v_2$}
\psfrag{w3}{\tiny $w_3$}
\psfrag{w4}{\tiny $w_4$}
\psfrag{u1}{\tiny $u_1$}
\psfrag{u2}{\tiny $u_2$}
\epsfig{file=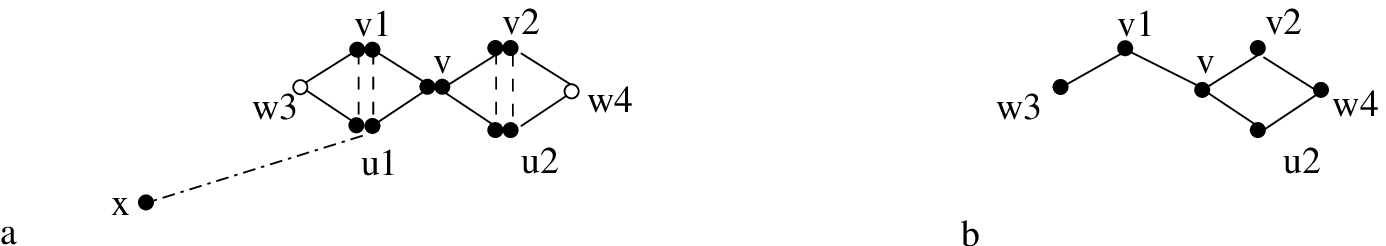,width=0.85\linewidth}
\caption{To the proof of Lemma~\ref{wrong4}, Case~2.2.1.2.2}
\label{wr4_22122}
\end{center}
\end{figure}

Now consider $S_2=S\setminus u_1$,  see Fig.~\ref{wr4_22122} b).
Since
$v_1\perp v_2$, $v_1\perp u_2$
and $v$ is the only common neighbor of $v_1$ and $v_2$ (or $v_1$ and $u_2$), a block containing
the edge $\l v, v_1\r$ contains neither $v_2$ nor $u_2$. Therefore, $v_1$ and $v$ compose a block of first type. As we have proved above,
${\rm{Val}}_S(v_1)=2$
so $v_1$ and $w_3$ also compose a block of first type. Now replacing the edge $(v_1,v)$ by $B_1$, and $(v_1,w_3)$ by $B_3$, we obtain a block decomposition
of $S$.

\smallskip
\noindent
{\bf Case 2.2.2:}
$B_3\in \B_{\rm{II}}$, and  $B_4\in \B_{\rm{I}}$. \ Denote by $w_3$ the remaining vertex of $B_3$. If $B_4$ does not intersect one of $B_1$ or $B_2$ (say $B_1$), then
 it must coincide with the edge $\l u_2,v_2\r$ of $B_2$, and
we get a situation described in Lemma~\ref{no3} (notice that we did not use orientations of edges while proving Lemma~\ref{no3}). Hence, we may assume that $B_4$ intersects both $B_1$ and $B_2$. This implies that $B_3$ does the same. Let $u_1$ and $u_2$ be vertices of $B_4$. Then $\Theta_{S\setminus x} (v)$ is a quiver shown on Fig.~\ref{wr4_222}. Consider  $\Theta_{S\setminus x} (v_2)$. Notice, that ${\rm{Val}}_{S\setminus x}(v_2)=4$ and  $\Theta_{S\setminus x} (v_2)$ was treated in Case~2.1.1.2.

\begin{figure}[!h]
\begin{center}
\psfrag{v}{\tiny $v$}
\psfrag{v1}{\tiny $v_1$}
\psfrag{v2}{\tiny $v_2$}
\psfrag{w3}{\tiny $w_3$}
\psfrag{u1}{\tiny $u_1$}
\psfrag{u2}{\tiny $u_2$}
\epsfig{file=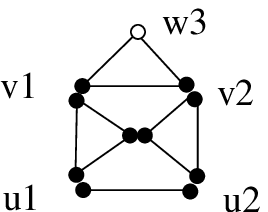,width=0.2\linewidth}
\caption{To the proof of Lemma~\ref{wrong4}, Case~2.2.2}
\label{wr4_222}
\end{center}
\end{figure}

\smallskip
\noindent
{\bf Case 2.2.3:} Both $B_3, B_4\in \B_{\rm{I}}$.
\ In this case all the vertices of $\Theta_{S\setminus x}(v)$ are dead ends, so $S$ is block-decomposable by Proposition~\ref{razval1} applied to $x$.

\medskip

Since all possibilities are exhausted, the lemma is proved.

\end{proof}

\begin{lemma}
\label{wrong3}
Let $v\in S$ be a vertex of valence $3$. Then for any non-neighbor $x$ of $v$ and any block decomposition of $S\setminus x$ the diagram $\Theta_{S\setminus x}(v)$ consists of exactly two blocks, one of type ${\rm{II}}$ and the other of type $\rm{I}$ having  only vertex $v$ in common.

\end{lemma}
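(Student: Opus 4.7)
The argument parallels Lemmas~\ref{no3} and~\ref{wrong4}. Fix a block decomposition of $S\setminus x$; since $x\perp v$, we have ${\rm Val}_{S\setminus x}(v)=3$. By Lemmas~\ref{no4} and~\ref{no5} the decomposition contains no block of type ${\rm IV}$ or ${\rm V}$. The first step is to rule out a type ${\rm III}$ block at $v$ by an adaptation of the proof of Lemma~\ref{no3}: if $v$ were the outlet of such a block $B$ with dead ends $v_1,w_1$, then since $B$ accounts for two of the three edges at $v$, the remaining edge would be supplied by a type ${\rm I}$ block $(v,c)$, and neither $v_1$ nor $w_1$ would be joined to any vertex of $S\setminus x$ outside $\{v\}$. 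If $x$ is adjacent to neither $v_1$ nor $w_1$, the components of $S\setminus v$ have sizes $2$ and $n-3\ge 5$, so Proposition~\ref{razval1} applies at $v$, yielding a block decomposition of $S$ and contradicting minimal non-decomposability; otherwise Proposition~\ref{razval2} applied to $\l\{v_1,w_1\},v,x,S\setminus\{v_1,w_1,v,x\}\r$ yields the same contradiction in the spirit of Lemma~\ref{no3}.

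With types ${\rm III},{\rm IV},{\rm V}$ at $v$ eliminated, the blocks containing $v$ are of type ${\rm I}$ (contributing degree $1$) or ${\rm II}$ (contributing degree $2$). Writing the local degree equation $d_1+\dots+d_k-2c=3$, where $c$ counts edge-cancellations at $v$, the admissible configurations are exactly:
(a) one $B_1\in\B_{\rm{II}}$ together with one $B_2\in\B_{\rm{I}}$ and $c=0$;
(b) three blocks in $\B_{\rm{I}}$ and $c=0$;
(c) two blocks $B_1,B_2\in\B_{\rm{II}}$ sharing an additional outlet $u$ with $(v,u)$ cancelling, plus one $B_3\in\B_{\rm{I}}$ and $c=1$.
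Case (c) is ruled out by noting that $B_1=(v,u,a)$ and $B_2=(v,u,b)$ with $(v,u)$ cancelled form an oriented $4$-cycle on $\{v,u,a,b\}$; by analyzing valences of $a$ and $b$ (bounded by Corollary~\ref{valle4}) one verifies that this $4$-cycle may be substituted by a single type ${\rm IV}$ block with outlets $v,u$ in an alternative decomposition of $S\setminus x$, contradicting Lemma~\ref{no4}. Case (b) is the hardest: here the subquiver $\l v,L_S(v)\r$ is a claw with no triangle through $v$. Choosing $y\notin L_S(v)\cup\{v,x\}$ (available since $|S|\ge 8$) and comparing decompositions of $S\setminus x$ and $S\setminus y$ at $v$, the same trichotomy applies, and Proposition~\ref{razval1} applied to $v$ (with components of $S\setminus v$ of controlled sizes) produces a block decomposition of $S$, again contradicting minimality.

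It remains in case (a) to verify that $B_1\cap B_2=\{v\}$ and that $\Theta_{S\setminus x}(v)$ has no further blocks. If $B_2=(v,c)$ with $c$ a vertex of $B_1$, the edge $(v,c)$ would appear in both blocks and either cancel (reducing valence to $2$) or double (giving a weight-$2$ edge), both incompatible with ${\rm Val}_{S\setminus x}(v)=3$ and $|L_S(v)|=3$. Hence $B_1\cap B_2=\{v\}$. Any additional block of $\Theta_{S\setminus x}(v)$ would share at least two vertices with $L_S(v)=\{a,b,c\}$ and thereby contribute an extra edge among the neighbors of $v$; such an edge combined with the triangle $B_1$ produces a local configuration that either forces a type ${\rm III},{\rm IV}$, or ${\rm V}$ block (forbidden at $v$ or globally) or allows Proposition~\ref{razval1} to decompose $S$. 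The main obstacle is the bookkeeping in case (b): one must verify conditions $(2)$--$(3)$ of Proposition~\ref{razval1} — and, where necessary, condition $(2b)$ of Proposition~\ref{razval2} — in each sub-case determined by how $x$ and $y$ interact with $L_S(v)$, mirroring the extensive case analysis in the proof of Lemma~\ref{wrong4}.
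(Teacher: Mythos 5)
Your overall strategy matches the paper's: eliminate blocks of types ${\rm III}$, ${\rm IV}$, ${\rm V}$ at $v$, conclude that $v$ lies in one type ${\rm II}$ and one type ${\rm I}$ block, and then show these meet only at $v$ and that $\Theta_{S\setminus x}(v)$ has no further blocks. But two of the steps contain genuine errors or gaps. First, a small structural point: since outlets are identified along a partial matching, a vertex lies in at most two blocks, so your configurations (b) (three type ${\rm I}$ blocks) and (c) (two type ${\rm II}$ blocks plus a type ${\rm I}$ block) are vacuous; the effort you invest in (b), which you call the hardest case, is spent on a configuration that cannot arise. More seriously, your dismissal of the case $B_1\cap B_2=\{v,c\}$ is based on a false computation: if the triangle $B_1=(v,v_1,u_1)$ and the edge $B_2=(v,v_1)$ are glued along both $v$ and $v_1$ and the two copies of $(v,v_1)$ reinforce rather than cancel, then $v$ acquires a double edge to $v_1$ and a simple edge to $u_1$, giving ${\rm Val}(v)=2+1=3$ exactly. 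This configuration is fully compatible with the hypothesis and must be excluded by a real argument; the paper's Case~2.1 does this by invoking Lemma~\ref{wrong4} (no vertex of valence $4$ is incident to a double edge) to show that $S\setminus u_1$ disconnects, and then applying Proposition~\ref{razval1} to $u_1$.

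The second and larger gap is the exclusion of additional blocks in $\Theta_{S\setminus x}(v)$, which is where most of the actual work of this lemma lives. Your one-sentence claim that any extra block ``either forces a type ${\rm III}$, ${\rm IV}$, or ${\rm V}$ block \ldots or allows Proposition~\ref{razval1} to decompose $S$'' does not hold up against the cases that actually occur. Writing $B_1=(v,v_1,u_1)$ and $B_2=(v,v_2)$, the possible extra blocks are $(v_1,u_1)$, $(v_1,v_2)$, $(v_1,u_1,v_2)$, $(v_1,v_2,w)$, and $(v_1,u_1,w)$ for a new vertex $w$. Only the case $(v_1,u_1,v_2)$ falls to a direct application of Proposition~\ref{razval1} (at $x$). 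The cases $(v_1,v_2)$ and $(v_1,v_2,w)$ are killed by Lemma~\ref{wrong4} (the link of a valence-$4$ vertex must be two disjoint edges) together with Corollary~\ref{after2} --- neither of which appears in your sketch at this point. The hardest case, $(v_1,u_1,w)$, requires first proving $v_2\perp w$, then passing to an auxiliary vertex $y\notin\l\Theta_{S\setminus x}(v),x\r$ and analyzing a block decomposition of $S\setminus y$ to show $x$ is joined to neither $v_1$ nor $u_1$, and finally invoking Corollary~\ref{after2}; the contradiction there comes from a forced chain of blocks, not from a forbidden block type or a direct disconnection. As written, your proposal asserts the conclusion of this case analysis rather than performing it.
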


\begin{proof}
Since valence of $v$ equals $3$, $v$ is contained in block $B_1$
of second or third type with two other vertices $v_1$ and $u_1$,
and in block $B_2$ of first type with second vertex $v_2$.
We consider both types of $B_1$ and all possible quivers
$\Theta_{S\setminus x}(v)$  below (see Table~\ref{wr3}).

\medskip
\noindent
{\bf Case 1:} $B_1\in\B_{\rm{III}}$. \ In this case $v_1$ and $u_1$ are
dead ends of the union of $B_1$ and $B_2$, so
$\{v_1,u\}\perp (S\setminus\{x,v\})$,
and $\Theta_{S\setminus x}(v)$ consists of $B_1$ and $B_2$ only.
If $x$ is not joined with $v_1$ and $u_1$, then $S$ is block-decomposable by Proposition~\ref{razval1} applied to $v_2$. Therefore, we may assume that $x$ is joined with at least one of $v_1$ and $u_1$, say $u_1$.
If $v_2\perp S\setminus\l\Theta_{S\setminus x}(v),x\r$, then again $S$ is block-decomposable by Proposition~\ref{razval1} applied to $x$.
Thus, we can assume that $v_2$ is joined with some vertex distinct from $v$ and $x$, see Fig.~\ref{wr3_1}.  Now we can apply Corollary~\ref{after2} to $S=\left\l S_1=\l u_1,v_1,v\r,b_1=v_2,b_2=x,S_2=S\setminus\l S_1,x,v_2\r\right\r$ with $c_1=v_1$ to show that $S$ is block-decomposable.

\begin{figure}[!h]
\begin{center}
\psfrag{v}{\tiny $v$}
\psfrag{v1}{\tiny $v_1$}
\psfrag{v2}{\tiny $v_2$}
\psfrag{u1}{\tiny $u_1$}
\psfrag{x}{\tiny $x$}
\epsfig{file=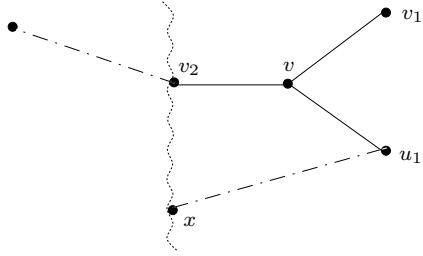,width=0.36\linewidth}
\caption{To the proof of Lemma~\ref{wrong3}, Case 1}
\label{wr3_1}
\end{center}
\end{figure}

\begin{table}[!h]
\begin{center}
\caption{To the proof of Lemma~\ref{wrong3}}
\label{wr3}
\psfrag{1}{\bf 1}
\psfrag{B=}{\small $B_1=$}
\psfrag{2}{\bf 2}
\psfrag{2.1}{\small \bf  2.1}
\psfrag{2.1.}{\small $|B_1\cap B_2|=2$}
\psfrag{2.2}{\bf 2.2}
\psfrag{2.2.}{\small $|B_1\cap B_2|=1$}
\psfrag{2.2.0.}{\small  (No $B_3$)}
\psfrag{2.2.1}{\small \bf 2.2.1}
\psfrag{2.2.2}{\small \bf 2.2.2}
\psfrag{B3=}{\small $B_3=$}
\psfrag{2.2.1.1}{{\scriptsize  \bf 2.2.1.1}}
\psfrag{2.2.1.2}{{\scriptsize \bf 2.2.1.2}}
\psfrag{2.2.2.1}{{\scriptsize \bf 2.2.2.1}}
\psfrag{2.2.2.2}{{\scriptsize \bf 2.2.2.2}}
\psfrag{2.2.2.3}{{\scriptsize \bf 2.2.2.3}}
\psfrag{B1}{\tiny $B_1$}
\psfrag{B2}{\tiny $B_2$}
\psfrag{B3}{\tiny $B_3$}
\psfrag{v}{\tiny $v$}
\psfrag{v1}{\tiny $v_1$}
\psfrag{v2}{\tiny $v_2$}
\psfrag{w}{\tiny $w$}
\psfrag{u1}{\tiny $u_1$}
\epsfig{file=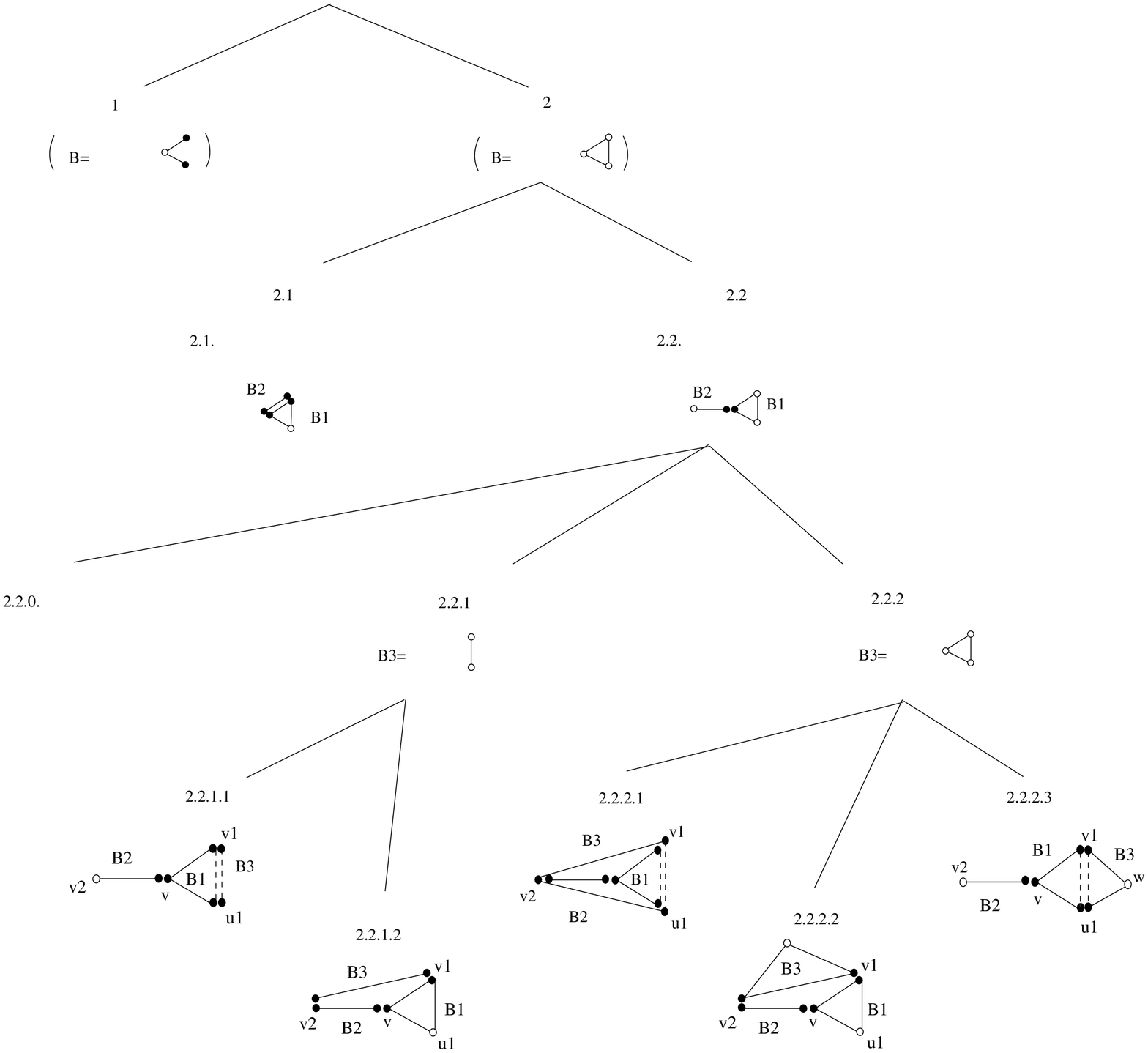,width=1.0\linewidth}
\end{center}
\end{table}

\medskip
\noindent
{\bf Case 2:} $B_1\in\B_{\rm{II}}$. \ Consider the following cases.

\smallskip
\noindent
{\bf Case 2.1:} $B_1$ and $B_2$ have two points in common. \ We may assume that $v_1=v_2$. Since ${\rm Val}_S(v)=3$ and $v\perp x$, $v_2$ and $v$ are joined by a double edge. By Lemma~\ref{wrong4}, no vertex of valence $4$ may be
incident to a double edge.
Thus, $S\setminus\l u_1,v,v_2\r\perp \l v,v_2\r$ and
$S\setminus u_1$ is not connected. Since valence of $u_1$ in $S$ does not exceed $4$, at least one connected component of  $S\setminus u_1$ has more than $2$ vertices (as in the proof of Lemma~\ref{no3}). Therefore, $S$ is block-decomposable by Proposition~\ref{razval1} applied to $u_1$.

\smallskip
\noindent
{\bf Case 2.2:} Vertex $v$ is the only common vertex of $B_1$ and $B_2$.
\ $\Theta_{S\setminus x}(v)$ may consist of two or three blocks.
To prove the statement we need to exclude the option of three blocks.
It is done in remaining part of the proof. Assume that $\Theta_{S\setminus x}(v)$
contains an additional block $B_3$. Clearly, $B_3$ is either of the first or of the second type.

\smallskip
\noindent
{\bf Case 2.2.1:} $B_3\in \B_{\rm{I}}$. \ There are two ways to attach $B_3$ to $B_1$ and $B_2$.

\smallskip
\noindent
{\bf Case 2.2.1.1:} $B_3$ does not intersect $B_2$. \ In this case $v_1$ and $u_1$ are either joined by a double edge or are not joined
in $S$ at all. If they are not joined, then
 both $v_1$ and $u_1$ are dead ends of $\Theta_{S\setminus x}(v)$ and
$S$ is block-decomposable (as in Case~1). If they are joined by a double edge, then all three vertices
$v_1,u_1$ and $v$ have valence $3$ in $S$, so  $\l v_1,u_1,v\r\perp x$,
and $S$ is block-decomposable by Proposition~\ref{razval1} applied to $v_2$.

\smallskip
\noindent
{\bf Case 2.2.1.2:} $B_3$ has one point in common with each of $B_1$ and $B_2$. \ We can assume that $B_3$ consists of $v_1$ and $v_2$. Since $L_S(v_1)$ contains a connected component of order at least $3$, Lemma~\ref{wrong4} yields
that ${\rm Val}_S(v_1)<4$, so ${\rm Val}_S(v_1)=3$.
Hence, $S\setminus\l\Theta_{S\setminus x}(v)\r\perp \l v,v_1\r$.
We apply Corollary~\ref{after2} to $S=\left\l S_1=\l v,v_1\r,u_1,v_2,S_2=S\setminus\l S_1,u_1,v_2\r
\right\r$ with $c_1=v$ to show that $S$ is block-decomposable.

\smallskip
\noindent
{\bf Case 2.2.2:} $B_3\in\B_{\rm{II}}$. \ There are three ways to attach $B_3$ to $B_1$ and $B_2$.

\smallskip
\noindent
{\bf Case 2.2.2.1:} $B_3$ contains all the three vertices $v_1,u_1,v_2$. \ Then all the vertices of $\Theta_{S\setminus x}(v)$ are dead ends, so $S$ is block-decomposable by Proposition~\ref{razval1} applied to $x$.

\smallskip
\noindent
{\bf Case 2.2.2.2:} $B_3$ has one point in common with each of $B_1$ and $B_2$. \ We can assume that $B_3$ contains $v_1$.
Then ${\rm Val}_S(v_1)=4$, and $L_S(v_1)$ is connected contradicting Lemma~\ref{wrong4}.


\smallskip
\noindent
{\bf Case 2.2.2.3:} $B_3$ does not intersect $B_2$. \ Denote by $w$ the third vertex of $B_3$. Let us prove first that $v_2$ is not joined with $w$ in $S$. If they are contained in a block of the first type in $S\setminus x$, then all the vertices of $\l
v,v_1,u_1,w,v_2\r$ are dead ends, so $S$ is block-decomposable by Proposition~\ref{razval1} applied to $x$. If $w$ and $v_2$ are contained in a block of second type, then ${\rm Val}_S(w)=4$,
but $L_S(w)$ consists of three connected components contradicting Lemma~\ref{wrong4}.
Therefore, $v_2\perp w$.
Observing that $v_1$ and $u_1$ are dead ends of $\Theta_{S\setminus x}(v)$, we see that they are joined only with $v,w$, and probably $x$.

Take any $y\notin \l\Theta_{S\setminus x}(v),x\r$ and consider $S_1=S\setminus y$ with a block decomposition.
We will prove that $x$ is joined with neither of $v_1$ and $u_1$. This implies that $S$ is block-decomposable by Corollary~\ref{after2}
applied to $S=\left\l S_1'=\l v_1,u_1\r,v,w,S_2'=S\setminus\l S_1',v,w\r\right\r$ with $c_1=v_1$.

Suppose that $x\not\perp v_1$. Since ${\rm Val}_S(v)=3$ and $v\perp \l x,w\r$, $v$ is not contained in one block with any of $x$ and $w$. Thus, $v_1,x$ and $w$ compose a block of second type and $v,v_1$ is a block of first
type. This implies that $v,v_2,u_1$ is a block of second type.

Since $v_2\perp u_1$ in $S$, in order to avoid the edge $(v_2,u_1)$
 there is another block containing $v_2$ and $u_1$.
 Since $u_1\not\perp w$, this block should contain $w$ also.
 But then $v_2\not\perp w$, which is already proved to be false.

\medskip

By exhausting all cases we completed the proof of the lemma.

\end{proof}

\begin{cor}
\label{nodouble}
Minimal non-decomposable quiver $S$ does not contain double edges.

\end{cor}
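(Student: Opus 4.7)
The plan is to argue by contradiction, using the structural description of the link-theta diagram $\Theta_{S\setminus x}(v)$ supplied by Lemmas~\ref{wrong4} and~\ref{wrong3}. Assume that $S$ contains a double edge $\l u,v\r $. By Corollary~\ref{valle4} both ${\rm Val}_S(u)$ and ${\rm Val}_S(v)$ are at most $4$, and since the double edge already accounts for two units of valence at each of its endpoints, these valences belong to $\{2,3,4\}$. My first step is to observe that $u$ and $v$ cannot both have valence $2$: in that case $\{u,v\}$ together with the double edge would be an entire connected component of $S$, which is impossible because any minimal non-decomposable quiver is connected and $|S|\ge 8$ by the standing hypothesis of this section. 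Interchanging $u$ and $v$ if necessary, I may therefore assume ${\rm Val}_S(v)\in\{3,4\}$.

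The second step is to fix a non-neighbour $x$ of $v$. Since the double edge contributes two to ${\rm Val}_S(v)$, the vertex $v$ has at most three distinct neighbours, hence at least $|S|-4\ge 4$ non-neighbours. Such an $x$ exists, and by the definition of a minimal non-decomposable quiver $S\setminus x$ is block-decomposable, so I may pick an arbitrary block decomposition of $S\setminus x$.

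The key step is then a single application of Lemma~\ref{wrong4} (if ${\rm Val}_S(v)=4$) or Lemma~\ref{wrong3} (if ${\rm Val}_S(v)=3$). In either case $\Theta_{S\setminus x}(v)$ consists of exactly two blocks, of types $\rm I$ or $\rm II$, that share only the vertex $v$. Every edge of such a block is simple, a block of type $\rm I$ contributes one edge incident to $v$ and a block of type $\rm II$ contributes two, and the fact that the two blocks share only $v$ forces the endpoints opposite to $v$ of these edges to be pairwise distinct. Consequently $v$ is joined in $S\setminus x$ to exactly ${\rm Val}_S(v)$ distinct neighbours, each by a single simple edge. This directly contradicts the fact that the double edge $\l u,v\r $ persists in $S\setminus x$ (which it does, since $x\notin\{u,v\}$), and the proof is complete.

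I do not foresee any real obstacle: once the degenerate valence-$(2,2)$ configuration has been ruled out by the connectedness argument, the corollary follows immediately from the structural information about $\Theta_{S\setminus x}(v)$ established in Lemmas~\ref{wrong4} and~\ref{wrong3}.
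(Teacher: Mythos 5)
Your proof is correct and follows essentially the same route as the paper: both arguments invoke Lemmas~\ref{wrong4} and~\ref{wrong3} to see that every edge at a vertex of valence $3$ or $4$ lies in a block of type $\rm I$ or $\rm II$ and is therefore simple, which forces the endpoints of a double edge to have valence $2$, and then dispose of that case by connectedness of $S$. The only difference is the order in which the two cases are handled, which is immaterial.
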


\begin{proof}
Let $v$ and $u$ be joined by a double edge. Take any non-neighbor $x$ of $v$ and consider $S\setminus x$ with some block decomposition. By Lemmas~\ref{wrong4} and~\ref{wrong3},
valences of $u$ and $v$ do not exceed $2$. Thus, they are joined with each other only and disconnected from the rest of $S$.

\end{proof}

\begin{proof}[Proof of Theorem~\ref{g8}]

Consider a quiver $S$ satisfying assumptions of the theorem and having at least $8$ vertices. By Lemmas~\ref{no5} and~\ref{no4}, valence of any vertex of $S$ does not exceed $4$. By Lemmas~\ref{wrong4} and~\ref{wrong3}, link of any vertex of
valence $4$ consists of two disjoint edges, and link of any vertex of valence $3$ consists of one edge and one vertex. By Corollary~\ref{nodouble}, $S$ does not contain double edges.

Now take all cycles of order $3$ in $S$ and paint all their edges and vertices in red (we assume all the remaining edges and vertices to be black). By Lemmas~\ref{wrong4} and~\ref{wrong3}, any red edge belongs to a unique cycle of order $3$. Any red
vertex is contained either in four red edges, or in two red edges and at most one black edge. Notice also that due to Lemmas~\ref{wrong4} and~\ref{wrong3} each cycle of order $3$ is cyclically oriented.

Denote by $S_1$ the quiver obtained by deleting all red edges from $S$. Let us show that $S_1$ is a forest. Indeed, $S_1$ does not contains vertices of valence $3$ or more. Further, if $S_1$ contains a cycle $C$, then each vertex of this cycle is
contained in two black edges, so the cycle does not contain any red vertex. This implies that no vertex of $S\setminus C$ is joined with $C$ in $S$, so either $S$ is not connected or $S=C$. In the latter case $S$ is block-decomposable.

Take a block decomposition of $S_1$: any edge is a block. It is well defined since there are no vertices of valence $3$ or more. Clearly, any red vertex is an outlet. Now consider each cycle of order $3$ as a block of second type, and glue it to $S_1$. We
obtain a block decomposition of $S$, which contradicts the assumptions of the theorem.

\end{proof}

Now we are able to prove the main results of the section.

\begin{theorem}
\label{min}
The only mutation-finite quivers satisfying assumptions of Theorem~\ref{g8} are ones mutation-equivalent to one of the two quivers $X_6$ and $E_6$ shown on Figure~\ref{minfig}.

\begin{figure}[!h]
\begin{center}
\psfrag{E}{$E_6$}
\psfrag{X}{$X_6$}
\epsfig{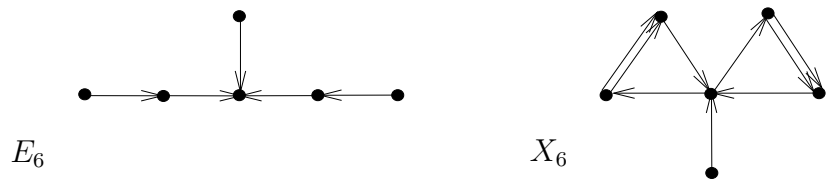}
\caption{Minimal non-decomposable mutation-finite quivers}
\label{minfig}
\end{center}
\end{figure}

\end{theorem}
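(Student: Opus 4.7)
The plan rests on Theorem~\ref{g8}, which restricts any minimal non-decomposable quiver to have at most $7$ vertices. Combined with Theorem~\ref{less3}, which forces every edge of a mutation-finite quiver of order $\ge 3$ to have multiplicity at most $2$ (in every representative of the class), this reduces the problem to a finite enumeration.

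First I would inductively enumerate all mutation-finite quivers of order $\le 7$. The base cases in order $2$ and $3$ are known. For the inductive step, given one representative $S$ from each finite mutation class of order $n$, I build every extension $S'$ on $n+1$ vertices by adjoining one new vertex joined to each existing vertex by an edge of weight $0$, $1$, or $2$ with either orientation; by Theorem~\ref{less3} any mutation-finite extension must have this form. Each candidate $S'$ is then tested for mutation-finiteness using Keller's Java applet~\cite{K}. This produces the complete list of finite mutation classes of order $n+1$, and iterating up to $n+1=7$ yields a finite, explicit catalog.

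Next, for each class in the list I check block-decomposability. By the geometric interpretation recalled in Section~\ref{blockdecomp}, block-decomposable quivers correspond to triangulations of bordered surfaces and mutations correspond to flips, so block-decomposability is a property of the whole mutation class; hence one only needs to examine a single representative per class. A direct inspection shows that every mutation-finite class of order $\le 5$ is block-decomposable, and in order $6$ every mutation-finite class is block-decomposable except for those containing $X_6$ and $E_6$, whose non-decomposability is established in~\cite[Propositions 4 and 6]{DO}.

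Finally, I verify minimality. Any proper subquiver of a quiver of order $6$ has order $\le 5$, and every subquiver of a block-decomposable quiver is block-decomposable; combined with the previous step, this shows that every proper subquiver of $X_6$ or $E_6$ is block-decomposable, so both classes consist of minimal non-decomposable quivers. Any other candidate minimal non-decomposable mutation-finite quiver of order $6$ is block-decomposable (and so excluded), and any of order $7$ in the catalog contains $X_6$ or $E_6$ as a subquiver, hence is not minimal. The principal obstacle is purely computational: the branching in the inductive enumeration is large, so the argument is essentially computer-assisted via Keller's applet. The conceptual simplification comes from three facts used throughout---the order bound from Theorem~\ref{g8}, the multiplicity bound from Theorem~\ref{less3}, and the invariance of block-decomposability under mutation---which together make the search space finite and allow a single representative per mutation class to decide everything.
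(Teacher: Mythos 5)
Your proposal follows essentially the same route as the paper: the order bound from Theorem~\ref{g8} and the multiplicity bound from Theorem~\ref{less3} reduce the problem to a finite computer-assisted search, carried out inductively by attaching a new vertex to one representative of each finite mutation class of order $n$, testing mutation-finiteness with Keller's applet, and checking block-decomposability once per class using its mutation-invariance. The one detail worth flagging is that, since \emph{minimality} is not a priori mutation-invariant, the last step must verify that \emph{every} quiver (not merely one representative) in each non-decomposable order-$7$ class contains a subquiver mutation-equivalent to $E_6$ or $X_6$ --- which is exactly what the paper does by running through all $416$, $132$, and $2$ quivers in the classes of $E_7$, $\widetilde E_6$, and $X_7$ respectively.
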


\begin{remark}
\label{preservedecomp}
We recall that the property of quiver $S$ to be block-decomposable is preserved by mutations. Indeed, according to~\cite{FST}, $S$ is block-decomposable if and only if it corresponds to an ideal triangulation of a punctured bordered surface, and any mutation corresponds to a flip of the triangulation. Thus, any quiver mutation-equivalent to $S$ arises from some triangulation, too. In particular, this implies that the set of non-decomposable quivers is invariant under mutations either. At the same time, the property to be {\it minimal} non-decomposable may not be preserved by mutations {\it a priori}. However, while proving Theorem~\ref{min}, we see that minimal non-decomposable quivers are invariant anyway.

\end{remark}

\begin{proof}[Proof of Theorem~\ref{min}]
The two quivers shown on Figure~\ref{minfig} are mutation-finite and not-decomposable (see \cite[Propositions 4 and 6]{DO}). To prove the theorem, it is sufficient to show that all other mutation-finite quivers on at most $7$ vertices either are block-decomposable, or contain subquivers which are mutation-equivalent to one of $X_6$ and $E_6$. In particular, this will imply that all the quivers mutation-equivalent to $X_6$ or $E_6$ are also minimal non-decomposable.

Let $S$ be a minimal non-decomposable mutation-finite quiver. By Theorem~\ref{g8}, $|S|\le 7$. Since the mutation class of $S$ is finite, multiplicities of edges of $S$ do not exceed $2$ (see Theorem~\ref{less3}). The number of quivers on at most $7$ vertices with bounded multiplicities of edges is finite. This means that we can use a computer to list all quivers, choose mutation-finite ones, and check which of them are block-decomposable.  However, the number of quivers in consideration is large. To reduce the time required for computations, we organize the check as follows.

First, we list all mutation classes of connected  mutation-finite quivers of order $3$, there are three of them (see  \cite[Theorem 7]{DO}), and choose one representative in each class. They all are block-decomposable. Clearly, any connected mutation-finite quiver of order $4$ contains a proper subquiver mutation-equivalent to one of these $3$ quivers of order three.

 Next, we add a vertex and join it with each of the $3$ quivers by edges of multiplicities at most $2$ in all possible ways (we use {\bf C++} program~\cite{programs}). For each obtained quiver we check if its mutation class is finite, and choose one representative from each finite mutation class (here we use {\rm Java} applet for quivers mutations \cite{K}). The resulting list contains $5$ quivers of order $4$, they all are block-decomposable.

Continuing in the same way we get $7$ finite mutation classes of order $5$, again all are block-decomposable.  Then we get $13$ classes of order $6$, exactly two of them consist of non-decomposable quivers, namely quivers mutation-equivalent to $X_6$ and quivers mutation-equivalent to $E_6$ .  Since all mutation-finite quivers with at most $5$ vertices are block-decomposable, all quivers mutation-equivalent $X_6$ or $E_6$ are minimal non-decomposable. Attaching a vertex to representatives of all classes of order $6$, we get $15$ finite mutation classes of quivers of order $7$, three of them  consist of non-decomposable quivers, namely classes containing $E_7$, or $\widetilde E_6$, or $X_7$.   These three mutation classes consist of $416$, $132$, and $2$ quivers respectively. Each quiver from the first two classes contains a subquiver mutation-equivalent to $E_6$,  each quiver from the third class contains a subquiver mutation-equivalent to $X_6$. Therefore, none of them is minimal non-decomposable.

\end{proof}

The following immediate corollary of Theorem~\ref{min} is the main tool in the classification of  mutation-finite quivers.

\begin{cor}
\label{contmin}

Every non-decomposable mutation-finite quiver contains a subquiver mutation-equivalent
to $E_6$ or to $X_6$.

\end{cor}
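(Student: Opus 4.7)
The plan is to reduce the corollary directly to Theorem~\ref{min} by exhibiting, inside any non-decomposable mutation-finite quiver $S$, a subquiver that is minimal non-decomposable in the sense of Definition~\ref{defmin} and that inherits the mutation-finiteness of $S$.

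First I would show that every non-decomposable quiver $S$ contains a minimal non-decomposable subquiver. The argument is purely combinatorial: consider the (non-empty, since $S$ itself qualifies) collection of all non-decomposable subquivers of $S$, and choose among them a subquiver $S'$ with the smallest number of vertices. By choice $S'$ is non-decomposable, and for every vertex $u\in S'$ the subquiver $S'\setminus u$ has strictly fewer vertices, so it cannot be non-decomposable and must be block-decomposable. Hence $S'$ satisfies the two conditions of Definition~\ref{defmin}.

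Next I would verify that $S'$ is mutation-finite. The key observation is the compatibility of mutations with taking induced subquivers: if $S'$ is the full subquiver of $S$ on a vertex subset $V'$ and $v\in V'$, then the mutation formula~\eqref{eq:MatrixMutation} only modifies entries $B_{ij}$ with $i,j$ neighbours of $v$, so the induced subquiver of $\mu_v(S)$ on $V'$ equals $\mu_v(S')$. Iterating, every quiver in the mutation class of $S'$ arises as an induced subquiver of some quiver in the mutation class of $S$. Since the mutation class of $S$ is finite by hypothesis, the mutation class of $S'$ is finite as well, i.e.\ $S'$ is mutation-finite. This step is the only one requiring any genuine observation, and it is standard; there is no real obstacle in the argument.

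Finally, since $S'$ is a minimal non-decomposable mutation-finite quiver, Theorem~\ref{min} forces $S'$ to be mutation-equivalent to either $E_6$ or $X_6$. As $S'$ is a subquiver of $S$, this shows that $S$ contains a subquiver mutation-equivalent to one of $E_6$ or $X_6$, which is the claim of Corollary~\ref{contmin}.
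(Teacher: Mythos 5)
Your proposal is correct and follows exactly the route the paper intends: the paper declares the corollary ``immediate'' from Theorem~\ref{min}, relying on precisely the two facts you spell out --- that a non-decomposable quiver contains a minimal non-decomposable subquiver (a vertex-minimal non-decomposable subquiver works), and that subquivers inherit mutation-finiteness because mutation at a vertex of a full subquiver commutes with restriction. Your write-up just makes explicit what the paper leaves implicit; there is no gap.
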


\section{Classification of non-decomposable quivers}
\label{class}

In this section we use Corollary~\ref{contmin} to classify all non-decomposable  mutation-finite quivers.

\begin{theorem}
\label{all}
A connected non-decomposable mutation-finite quiver of order greater than $2$ is mutation-equivalent
to one of the eleven quivers $E_6$, $E_7$, $E_8$, $\widetilde E_6$, $\widetilde E_7$,
$\widetilde E_8$, $X_6$, $X_7$, $E_6^{(1,1)}$, $E_7^{(1,1)}$, $E_8^{(1,1)}$ shown on Figure~\ref{allfig}.

\begin{figure}[!h]
\begin{center}
\psfrag{1}{$E_6$}
\psfrag{2}{$E_7$}
\psfrag{3}{$E_8$}
\psfrag{1_}{$\widetilde E_6$}
\psfrag{2_}{$\widetilde E_7$}
\psfrag{3_}{$\widetilde E_8$}
\psfrag{1__}{$E_6^{(1,1)}$}
\psfrag{2__}{$E_7^{(1,1)}$}
\psfrag{3__}{$E_8^{(1,1)}$}
\psfrag{4}{$X_6$}
\psfrag{5}{$X_7$}
\epsfig{file=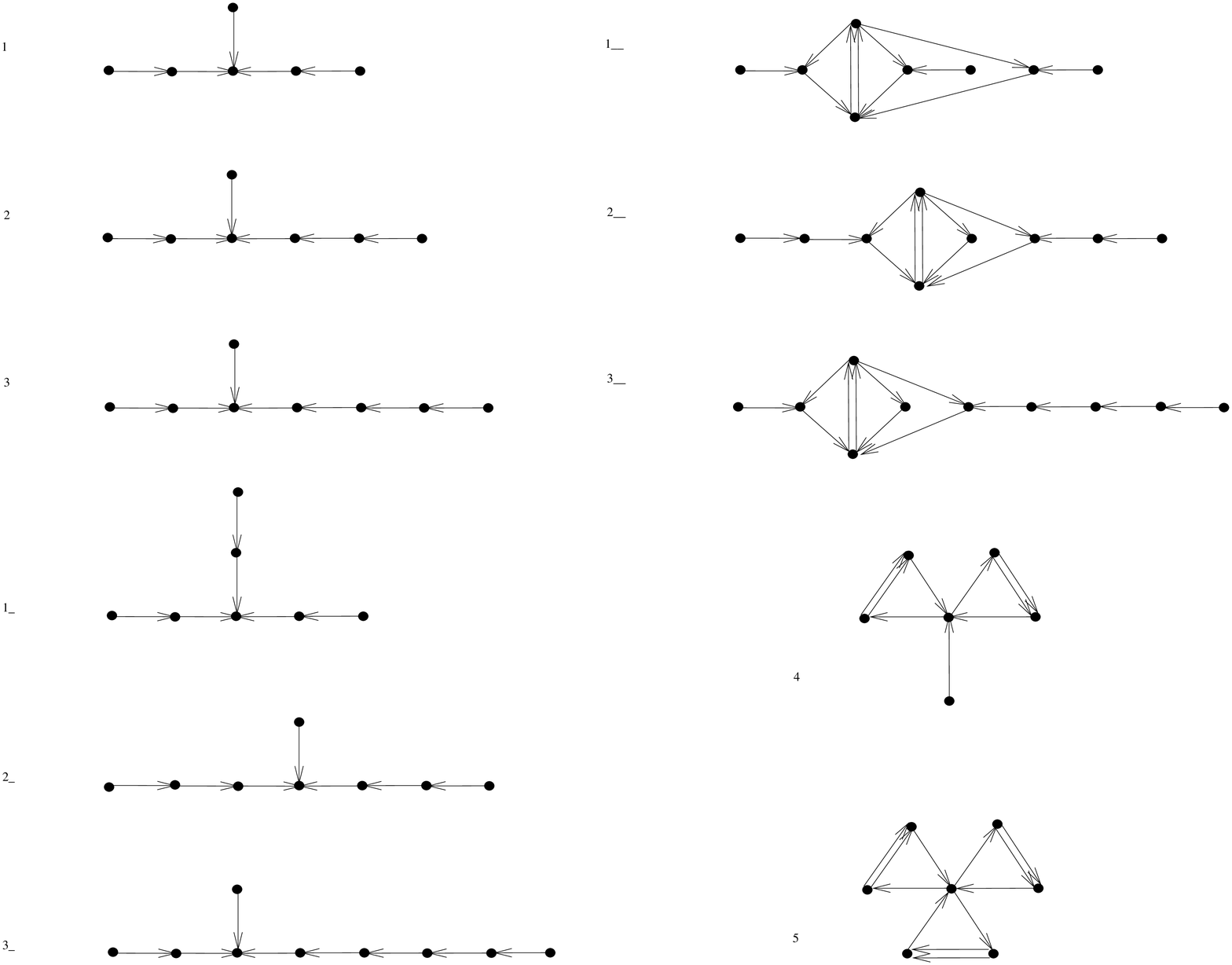,width=1.0\linewidth}
\caption{Non-decomposable mutation-finite quivers of order at least $3$}
\label{allfig}
\end{center}
\end{figure}

\end{theorem}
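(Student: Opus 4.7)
My plan is to run the inductive attaching procedure outlined in Section~\ref{main}, using Corollary~\ref{contmin} as the base case and Corollary~\ref{p1} as the terminator. The backbone of the argument is the extension lemma (the one referenced in the sketch as Lemma~6.4) which says that if $S_1\subset S$ is a proper subquiver and $S_0$ is mutation-equivalent to $S_1$, then some quiver mutation-equivalent to $S$ contains $S_0$ as a subquiver. Combined with Corollary~\ref{contmin}, this means that any non-decomposable mutation-finite quiver $S$ of order $n\ge 6$ admits a representative in its mutation class that contains a fixed copy of $E_6$ or $X_6$, and more generally, it admits a representative containing any prescribed non-decomposable mutation-finite quiver of smaller order that occurs (up to mutation equivalence) as a subquiver.

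First I would enumerate, for each order $n$ starting from $n=6$, all non-decomposable mutation-finite quivers up to mutation equivalence. At order $n=6$, the representatives are $E_6$ and $X_6$ by Theorem~\ref{min}. To pass from order $n$ to order $n+1$, I take one representative $T$ from each non-decomposable mutation-finite class of order $n$ and form every quiver obtained from $T$ by adjoining one new vertex $v$ and attaching it by edges of multiplicity at most $2$, allowing all orientations; by Theorem~\ref{less3} this is the only way to produce a mutation-finite quiver of order $n+1$ that contains $T$. For each candidate I check finiteness of the mutation class computationally (via Keller's applet~\cite{K}), and from each finite class I pick a representative and test block-decomposability. By the extension lemma every non-decomposable mutation-finite quiver of order $n+1$ arises this way, so the procedure is exhaustive.

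Carrying this out explicitly, at order $7$ this yields exactly the three non-decomposable mutation classes of $E_7$, $\widetilde E_6$, and $X_7$. Feeding these back into the procedure produces at order $8$ the classes of $E_8$, $\widetilde E_7$, and $E_6^{(1,1)}$; at order $9$ the classes of $\widetilde E_8$ and $E_7^{(1,1)}$; and at order $10$ only $E_8^{(1,1)}$. Then I run the attaching procedure once more from $E_8^{(1,1)}$ and verify that no extension by a single vertex produces a mutation-finite quiver. This is the key computational step, but it is entirely bounded: the number of orientations and multiplicities to check is finite and small, since each new vertex attaches via at most $10$ edges of multiplicity $1$ or $2$ with a choice of orientation.

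At this point Corollary~\ref{p1} applies with $d=11$: since there is no non-decomposable mutation-finite quiver of order $11$, there is no such quiver of order $\ge 11$ either. Combined with the enumeration for orders $6$ through $10$ and the fact that any non-decomposable mutation-finite quiver of order at least $3$ contains a subquiver mutation-equivalent to $E_6$ or $X_6$ (so has order at least $6$), this exhausts all non-decomposable mutation-finite quivers, giving precisely the eleven listed in Figure~\ref{allfig}. The main obstacle is purely computational bookkeeping: one must organize the enumeration so that isomorphic quivers and mutation-equivalent ones are not counted repeatedly, and one must trust the applet's verdict on finiteness of each intermediate mutation class; once this bookkeeping is in place, every logical step is supplied by the extension lemma, Corollary~\ref{contmin}, Theorem~\ref{less3}, and Corollary~\ref{p1}.
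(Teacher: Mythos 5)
Your proposal follows the paper's own proof essentially verbatim: the same inductive vertex-attaching procedure starting from $E_6$ and $X_6$, justified by Theorem~\ref{less3}, Lemma~\ref{submt} and Corollary~\ref{contmin}, verified computationally at each order with the same resulting lists, and terminated by Corollary~\ref{p1} once order $11$ yields nothing. The only step you leave tacit is the paper's Lemma~\ref{m1} --- that a non-decomposable mutation-finite quiver of order $d\ge 7$ contains a non-decomposable mutation-finite \emph{subquiver} of order $d-1$ (take any connected order-$(d-1)$ subquiver containing the copy of $E_6$ or $X_6$) --- which is what licenses applying the extension lemma one order at a time.
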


All these quivers have finite mutation class~\cite{DO} and are non-decompos\-able since each of them contains a subquiver mutation-equivalent to $E_6$ or $X_6$. We need only to prove that this list is complete.

We prove several elementary preparatory statements first.

\begin{lemma}
\label{m1}
Let $S$ be a non-decomposable quiver of order $d\ge 7$ with finite mutation class. Then $S$ contains a non-decomposable mutation-finite subquiver $S_1$ of order $d-1$.

\end{lemma}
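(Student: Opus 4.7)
The plan is to exhibit a single vertex $v\in S$ whose deletion $S_1 = S\setminus v$ remains non-decomposable; mutation-finiteness of $S_1$ will then be automatic.

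First, I would invoke Corollary~\ref{contmin}: since $S$ is non-decomposable and mutation-finite, $S$ contains a subquiver $T$ on $6$ vertices that is mutation-equivalent to $E_6$ or to $X_6$. The quivers $E_6$ and $X_6$ are themselves non-decomposable, and, as recorded in Remark~\ref{preservedecomp}, non-decomposability is preserved under mutations; hence $T$ is non-decomposable as well. Because $|S| = d \ge 7 > 6 = |T|$, the set $S \setminus T$ is non-empty, so I pick any $v \in S \setminus T$ and put $S_1 := S\setminus v$. Then $|S_1| = d-1$ and $T$ is a subquiver of $S_1$.

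Now $S_1$ is non-decomposable: any subquiver of a block-decomposable quiver is block-decomposable, so if $S_1$ were block-decomposable then $T \subset S_1$ would be block-decomposable too, contradicting the previous paragraph. Finally, $S_1$ is mutation-finite. Indeed, any sequence of mutations performed on $S_1$ (in directions indexed by vertices of $S_1$) can equally be performed on $S$, and since the exchange-matrix mutation formula~(\ref{eq:MatrixMutation}) involving a vertex $k\in S_1$ only uses entries $B_{ik}, B_{kj}, B_{ij}$ with $i,j\in S_1$, the resulting mutate of $S_1$ is a subquiver of the corresponding mutate of $S$. Thus every edge weight appearing in the mutation class of $S_1$ already appears in the mutation class of $S$, and Theorem~\ref{less3} applied to $S$ keeps these weights bounded by $2$, so $S_1$ is mutation-finite.

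There is no substantive obstacle here: the statement is essentially an immediate corollary of Corollary~\ref{contmin} together with two inheritance properties of subquivers — namely, that block-decomposability and mutation-finiteness both descend to subquivers.
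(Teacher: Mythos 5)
Your proof is correct and follows essentially the same route as the paper: apply Corollary~\ref{contmin} to find a subquiver mutation-equivalent to $E_6$ or $X_6$, delete a vertex outside it, and use that block-decomposability and mutation-finiteness both pass to subquivers. The only (immaterial for the statement as written) difference is that the paper chooses $S_1$ to be a \emph{connected} subquiver of order $d-1$ containing that exceptional piece, which matters for the later inductive applications but is not required by the lemma itself.
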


\begin{proof}
According to Corollary~\ref{contmin}, $S$ contains a subquiver $S_0$ mutation-equivalent to $E_6$ or $X_6$. Let $S_1$ be any connected subquiver  of $S$ of order $d-1$ containing $S_0$. Clearly, $S_1$ is non-decomposable, and its mutation class is finite.

\end{proof}

\begin{cor}
\label{p1}
Suppose that for some $d\ge 7$ there are no non-de\-com\-po\-sable  mutation-finite quivers of order $d$. Then order of any non-decomposable  mutation-finite quiver does not exceed $d-1$.

\end{cor}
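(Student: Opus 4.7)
The plan is to derive Corollary~\ref{p1} as an immediate downward induction based on Lemma~\ref{m1}. I will argue by contradiction: suppose there exists a non-decomposable mutation-finite quiver $S$ of some order $n\ge d$. If $n=d$, then $S$ itself directly contradicts the hypothesis that no such quivers of order $d$ exist, so I may assume $n>d$.

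Starting from $S=S_n$, I will apply Lemma~\ref{m1} iteratively to obtain a nested descending chain
\[
S=S_n\supset S_{n-1}\supset\cdots\supset S_d
\]
of non-decomposable mutation-finite quivers, where $|S_k|=k$. The key point is that at every step the current quiver has order at least $d+1\ge 8\ge 7$, which is exactly the hypothesis needed to invoke Lemma~\ref{m1}. Thus the induction proceeds with no obstruction, producing after $n-d$ steps a non-decomposable mutation-finite quiver $S_d$ of order exactly $d$ — contradicting the assumption of the corollary.

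I do not expect any genuine obstacle: the substantive content has already been isolated in Lemma~\ref{m1} (which in turn rests on Corollary~\ref{contmin}, the structural statement that every non-decomposable mutation-finite quiver contains a subquiver mutation-equivalent to $E_6$ or $X_6$). The only thing to verify is the bookkeeping condition $d\ge 7$ at each step of the induction, which holds uniformly because every intermediate order is bounded below by $d$. Consequently the proof is essentially a one-sentence argument, and its role in the paper is simply to upgrade the ``one step down'' statement of Lemma~\ref{m1} to an ``arbitrarily many steps down'' conclusion that lets the eventual finite check at orders $\le 10$ rule out all larger orders at once.
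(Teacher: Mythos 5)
Your proposal is correct and matches the paper's (implicit) argument: the corollary is stated there as an immediate consequence of Lemma~\ref{m1}, obtained by exactly the iterated descent you describe, with the hypothesis $d\ge 7$ guaranteeing that every intermediate quiver of order at least $d+1\ge 8$ still satisfies the order bound needed to invoke the lemma.
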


A proof of the following lemma is evident.

\begin{lemma}
\label{submt}
Let  $S_1$ be a proper subquiver of $S$, let $S_0$ be a quiver mutation-equivalent to $S_1$. Then there exists a quiver $S'$ which is mutation-equivalent to $S$ and contains $S_0$.

\end{lemma}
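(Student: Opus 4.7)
The plan is to exploit the locality of matrix mutation: the entries $B'_{ij}$ produced by $\mu_k$ depend only on $B_{ij}$, $B_{ik}$ and $B_{kj}$. In particular, if $k$, $i$, $j$ all lie in the vertex set $V(S_1)$ of the subquiver $S_1\subset S$, then the value of $B'_{ij}$ computed from the full exchange matrix of $S$ coincides with the value computed from the submatrix corresponding to $S_1$. This is the only ingredient I will need.

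By hypothesis there is a finite sequence of mutations $\mu_{k_m}\circ\dots\circ\mu_{k_1}$, with every $k_i\in V(S_1)$, taking $S_1$ to $S_0$. The first step of the proof will be to apply \emph{the same} sequence to the ambient quiver $S$, setting $S'=\mu_{k_m}\circ\dots\circ\mu_{k_1}(S)$. By definition $S'$ is mutation-equivalent to $S$. The second step is to check, by induction on $m$, that the principal submatrix of the exchange matrix of $S'$ indexed by $V(S_1)$ equals the exchange matrix of $S_0$. The inductive step uses precisely the locality remark above: when mutating at $k_i\in V(S_1)$, the entries of the $V(S_1)$-submatrix transform exactly as they do under the mutation $\mu_{k_i}$ applied inside $S_1$ alone, while entries involving a vertex outside $V(S_1)$ are irrelevant for this submatrix. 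Hence the subquiver of $S'$ spanned by $V(S_1)$ is $S_0$, i.e., $S'$ contains $S_0$.

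I do not expect any real obstacle: the argument is a direct consequence of formula \eqref{eq:MatrixMutation}, and the author even comments that the proof is evident. The only thing worth being careful about is that the mutations are performed at vertices of $S_1$ (not at arbitrary vertices of $S$), which is what ensures the computation restricts properly and is guaranteed by the assumption that $S_0$ is mutation-equivalent to $S_1$ through mutations inside $S_1$.
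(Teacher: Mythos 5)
Your argument is correct and is precisely the ``evident'' proof the paper has in mind (the paper gives no details, stating only that the proof is evident): apply to $S$ the same mutation sequence that takes $S_1$ to $S_0$, and use the locality of formula \eqref{eq:MatrixMutation} to see that the principal submatrix indexed by the vertices of $S_1$ transforms exactly as the exchange matrix of $S_1$ does. Your remark that the mutations are automatically performed at vertices of $S_1$, since that is what mutation-equivalence of $S_1$ and $S_0$ means, correctly closes the only point needing care.
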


\begin{proof}[Proof of Theorem~\ref{all}]

According to Theorem~\ref{min}, there are exactly two finite mutation classes of non-decomposable quivers of order $6$, namely classes of $E_6$ and $X_6$. Due to Corollary~\ref{contmin}, all other non-decomposable mutation-finite quivers have at least $7$ vertices. By  Lemma~\ref{submt}, in each finite mutation class of non-decomposable quivers there is a representative containing a subquiver $E_6$ or $X_6$.

Therefore, to find all finite mutation classes of non-decomposable quivers of order $7$ we need to attach a vertex to $E_6$ and $X_6$ in all possible ways (i.e. by edges of multiplicity at most two, with all orientations), and to choose amongst obtained $7$-vertex quivers
all mutation-finite classes. This has been done by using {\rm Java } applet~\cite{K} and an elementary {\bf C++} program~\cite{programs} (in fact, the same algorithm was used in the proof of Theorem~\ref{min}). In this way we get $3$ finite mutation classes of quivers of order $7$ with representatives $X_7$, $E_7$, and $\widetilde E_6$.

Now, Lemmas~\ref{submt} and~\ref{m1} allow us to continue the procedure.  To list all finite mutation classes of non-decomposable quivers of order $8$ we attach a vertex to $X_7$, $E_7$, and $\widetilde E_6$ in all possible ways, and choose again all mutation-finite classes.  The result consists of $3$ mutation classes with representatives $E_8$,  $\widetilde E_7$,  and $E_6^{(1,1)}$.

In the same way we analyzed the quivers of order $9$ and obtained $2$ mutation classes with representatives $\widetilde E_8$  and $E_7^{(1,1)}$. To find all finite mutation classes of non-decomposable quivers of order $10$, we apply  the same procedure  to $\widetilde E_8$ and $E_7^{(1,1)}$.
 The result is a unique mutation class containing $E_8^{(1,1)}$.

Finally, the same procedure applied to $E_8^{(1,1)}$ gives no mutation-finite quivers at all. This implies that there are no non-decomposable mutation-finite quiver of order $11$.
 Now Corollary~\ref{p1} yields immediately the Theorem statement.

\end{proof}

\bigskip

\section{Minimal mutation-infinite quivers}
\label{inf}

The main goal of this section is to provide a criterion for a quiver to be mutation-finite, namely to prove Theorem~\ref{crit}. For given quiver $S$, this criterion allows to check if $S$ is mutation-finite in a polynomial in $|S|$ time.

\begin{definition}
\label{definf}
A \emph{minimal mutation-infinite quiver} $S$ is a quiver that
\begin{itemize}
\item has infinite mutation class;
\item any proper subquiver of $S$ is mutation-finite.
\end{itemize}
\end{definition}

\begin{example}
Any mutation-infinite quiver of order $3$ is minimal. This is caused by the fact that any quiver of order at most $2$ is mutation-finite.
\end{example}

Clearly, any minimal mutation-infinite quiver is connected. Notice that the property to be minimal mutation-infinite is not mutation invariant.
Indeed, any mutation-infinite class contains quivers with arbitrary large multiplicities of edges. If $|S|>3$, then taking a connected subquiver of $S$ of order $3$ containing an edge of multiplicity greater than $2$ we get a proper subquiver of $S$ which is mutation-infinite (see~Theorem~\ref{less3}). Note also that minimal mutation-infinite quiver of order at least $4$ does not contain edges of multiplicity greater than two.

We will deduce the criterion from the following lemma.

\begin{lemma}
\label{le10}

Any minimal mutation-infinite quiver contains at most $10$ vertices.

\end{lemma}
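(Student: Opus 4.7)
My strategy is to argue by contradiction: assume $S$ is minimal mutation-infinite with $|S|\ge 11$, and derive a contradiction using the classification Theorem~\ref{all} together with the attach-a-vertex enumeration from its proof.

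First I would collect the easy structural consequences of minimality. Every $3$-vertex subquiver of $S$ is proper, hence mutation-finite, so Theorem~\ref{less3} forces every edge of $S$ to have multiplicity at most $2$. Since block-decomposable quivers are mutation-finite, $S$ itself is non-decomposable. As $|S|\ge 11>7$, Theorem~\ref{g8} prevents $S$ from being minimal non-decomposable, so there is a proper minimal non-decomposable subquiver $S_0\subsetneq S$; by the minimality of $S$ the subquiver $S_0$ is mutation-finite, and Theorem~\ref{min} then gives $S_0\sim E_6$ or $S_0\sim X_6$.

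Next I would bring $|S|$ down to exactly $11$. Pick $S_1\subsetneq S$ with $|S_1|=|S|-1\ge 10$ and $S_0\subset S_1$; then $S_1$ is non-decomposable (it contains $S_0$) and mutation-finite (a proper subquiver of $S$), so by Theorem~\ref{all} it is mutation-equivalent to one of the eleven classified types. Since $|S_1|\ge 10$ and $E_8^{(1,1)}$ is the only classified type of order at least $10$, we must have $S_1\sim E_8^{(1,1)}$, $|S_1|=10$, and hence $|S|=11$. Applying Lemma~\ref{submt} with $S_0':=E_8^{(1,1)}$, there is a quiver $S^*\sim S$ on the same $11$-vertex set that literally contains $E_8^{(1,1)}$, with one additional vertex $v$ attached by some edges; moreover the mutation sequence realizing this equivalence can be arranged to act only on the vertices of $S_1$, fixing $v$.

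The main obstacle is ruling out $|S|=11$. My plan is to extend the attach-a-vertex enumeration from the proof of Theorem~\ref{all}: enumerate via Keller's applet~\cite{K} every attachment of a single vertex $v$ to $E_8^{(1,1)}$ by edges of multiplicity at most $2$, and for each resulting $S^*$ exhibit a vertex $u\in V(E_8^{(1,1)})$ for which the $10$-vertex subquiver $S^*\setminus u$ has infinite mutation class. Using that the mutations $S\to S^*$ fix $v$, I would then verify that the sequence can be re-routed to also fix $u$---equivalently, that $S\setminus\{v,u\}\sim E_8^{(1,1)}\setminus u$---so that $S\setminus u\sim S^*\setminus u$ is itself mutation-infinite, contradicting the minimality of $S$. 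The delicate point is precisely this re-routing: subquivers of $S$ and of $S^*$ on the same vertex set are mutation-equivalent only when the mutation sequence connecting them can be chosen inside that set, so one must check, over the finite list of attachments of $v$, that for at least one suitable $u$ the sub-sequence of mutations restricted to $V\setminus u$ still connects $S\setminus u$ to $S^*\setminus u$. This case-by-case verification, combined with the already-completed enumeration from the proof of Theorem~\ref{all}, is the computational heart of the lemma.
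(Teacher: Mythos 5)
Your reduction to the case $|S|=11$ with a $10$-vertex subquiver $S_1\sim E_8^{(1,1)}$ is sound and matches the paper's first step. The gap is in how you rule out $|S|=11$. You pass to $S^*\sim S$ that literally contains $E_8^{(1,1)}$ (via Lemma~\ref{submt}) and then look for a mutation-infinite $10$-vertex subquiver $S^*\setminus u$. But exhibiting such a subquiver of $S^*$ contradicts nothing: minimality is a statement about the subquivers of $S$ itself, and --- as the paper explicitly warns just before Lemma~\ref{le10} --- the property of being \emph{minimal} mutation-infinite is not a mutation invariant. Your proposed repair, ``re-route the mutation sequence so that it also fixes $u$,'' is not available: the sequence taking $S_1$ to $E_8^{(1,1)}$ acts on all ten vertices of $S_1$, and there is no reason one can choose it inside $S_1\setminus u$ for any useful $u$; whether this is possible depends on the unknown quiver $S$ and cannot be settled by enumerating attachments of $v$ to $E_8^{(1,1)}$. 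So the ``computational heart'' you describe is not a finite, well-posed computation.

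The paper closes this loop differently, and this is the key idea you are missing: instead of normalizing $S_1$ to $E_8^{(1,1)}$ by mutations, it observes that $S_1$ is \emph{literally equal} to one of the $5739$ quivers in the mutation class of $E_8^{(1,1)}$, and therefore $S$ itself is literally one of the finitely many quivers obtained by attaching a single vertex (edge multiplicities at most $2$) to one of those $5739$ quivers. One then checks, for each quiver in this finite list, that some order-$10$ subquiver is mutation-infinite --- a check performed directly on $S$, so it genuinely contradicts minimality. Replacing your enumeration over attachments to $E_8^{(1,1)}$ alone by an enumeration over attachments to the entire mutation class fixes the argument.
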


The bound provided in the lemma is sharp: there exist numerous minimal mutation-infinite quivers of order $10$. We show some of them below. One source of such examples are simply-laced Dynkin diagrams of root systems of hyperbolic Kac-Moody algebras with any orientations of edges. There are two such diagrams of order $10$ (e.g., see~\cite{Kac}), examples of corresponding quivers are shown on Fig.~\ref{e10}.

\begin{figure}[!h]
\begin{center}
\epsfig{file=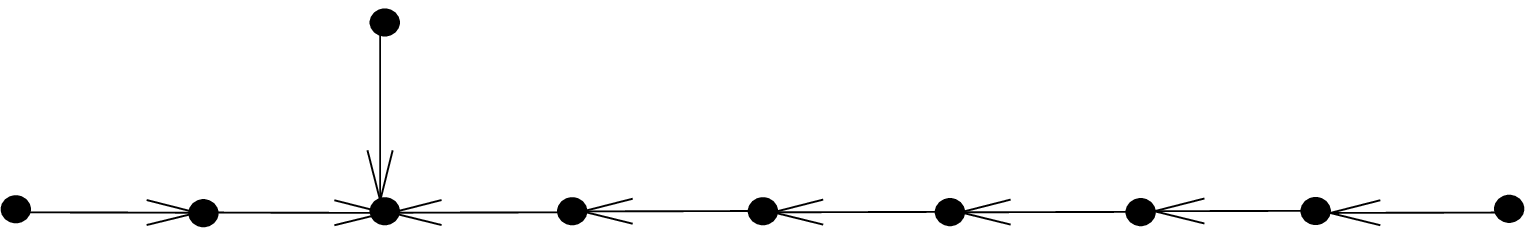,width=0.44\linewidth}\qquad\qquad\epsfig{file=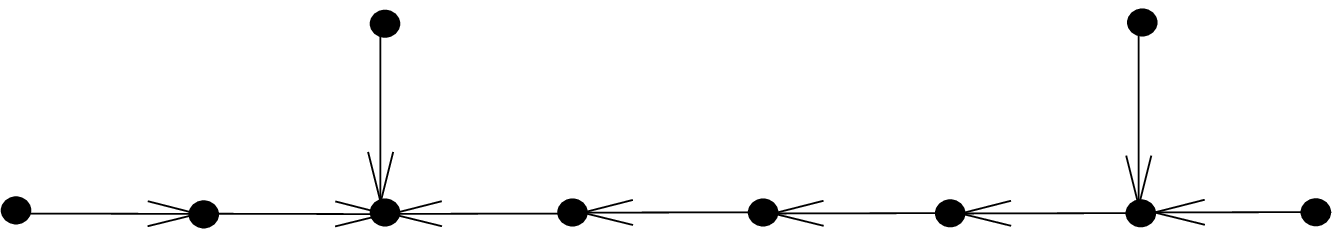,width=0.4\linewidth}
\caption{Minimal mutation-infinite quivers of order $10$ coming from Dynkin diagrams}
\label{e10}
\end{center}
\end{figure}

\begin{remark}
There is no general algorithm to determine if two infinite-mutational quivers are mutation-equivalent. However, for {\it acyclic} quivers (i.e., containing no oriented cycles) the following result is known (see~\cite[Corollary~4]{K3}): if two acyclic quivers are mutation-equivalent, then there exists a sequence of mutations from one of them to another via acyclic quivers only. In particular, this implies that two quivers shown on Fig.~\ref{e10} are not mutation-equivalent. Indeed, they both are trees, but it is easy to see that the only way to change the topological type of tree by mutation is to create an oriented cycle.

\end{remark}

Another series of examples can be obtained from the quiver shown on Fig.~\ref{newinf}.

\begin{figure}[!h]
\begin{center}
\epsfig{file=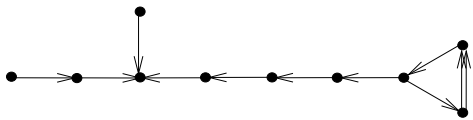,width=0.4\linewidth}
\caption{Minimal mutation-infinite quiver of order $10$}
\label{newinf}
\end{center}
\end{figure}

Note that it is not clear if the quiver shown on the Fig.~\ref{newinf} is not mutation-equivalent to one of the quivers shown on Fig.~\ref{e10}.

\begin{proof}[Proof of Lemma~\ref{le10}]
Let $S$ be a minimal mutation-infinite quiver.

First, we prove a weaker statement, i.e. we show that $|S|\le 11$. In fact, this bound follows immediately from Theorems~\ref{g8} and~\ref{all}.
Indeed, either all the proper subquivers of $S$ are block-decomposable, or $S$ contains a proper finite mutational non-decomposable subquiver of order $|S|-1$ (we can assume that this quiver is connected: if it is not connected but non-decomposable, it contains a non-decomposable connected component $S_0$, and any connected subquiver of $S$ of order $|S|-1$ containing $S_0$ is non-decomposable). In the former case $|S|\le 7$ according to Theorem~\ref{g8} (again, we emphasize that we did not require $S$ to be mutation-finite in the assumptions of Theorem~\ref{g8}). In the latter case $|S|-1\le 10$ due to Theorem~\ref{all}, which proves inequality  $|S|\le 11$.

Now suppose that $|S|=11$. Then $S$ contains a proper finite mutational non-decomposable
subquiver $S'$ of order $10$. According to Theorem~\ref{all}, $S'$ is mutation-equivalent
to $E_{10}^{(1,1)}$. The mutation class of $E_{10}^{(1,1)}$ consists of $5739$ quivers,
which can be easily computed using Keller's \rm{Java} applet~\cite{K}. In other words, we see that $S$
contains one of $5739$ quivers of order $10$ as a proper subquiver.

Hence, we can list all minimal mutation-infinite quivers of order $11$ in the following
way. To each of $5739$ quivers above we add one vertex in all possible ways (we can do that since
the multiplicity of edge is bounded by two; the sources of the program can be found in~\cite{programs}). For every obtained quiver we check whether all its proper subquivers of order $10$ (and, therefore, all the others) are mutation-finite. However, the resulting set of the procedure above is empty: every obtained quiver has at least one mutation-infinite subquiver of order $10$, so it is not minimal.

\end{proof}

As a corollary of Lemma~\ref{le10}, we get the criterion for a quiver to be mutation-finite.

\begin{theorem}
\label{crit}
A quiver $S$ of order at least $10$ is mutation-finite if and only if all subquivers of $S$ of order $10$ are mutation-finite.

\end{theorem}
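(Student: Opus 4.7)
The plan is to reduce the theorem to Lemma~\ref{le10} via the following elementary observation, which I would establish first: every subquiver $S' \subseteq S$ of a mutation-finite quiver $S$ is itself mutation-finite. The proof of this observation is a direct check that mutation commutes with restriction to a subquiver containing the mutated vertex. Indeed, for any $v \in S'$ and any $i,j \in S' \setminus \{v\}$, the matrix mutation formula~\eqref{eq:MatrixMutation} expresses $(\mu_v B)_{ij}$ in terms of $B_{ij}$, $B_{iv}$ and $B_{vj}$ only, and these agree with the corresponding entries of the matrix of $S'$ since $S'$ is a subquiver. Iterating, the mutation class of $S'$ embeds into the collection of restrictions of quivers in the mutation class of $S$, and hence is finite whenever the latter is.

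Granted this observation, the forward implication of the theorem is immediate: if $S$ is mutation-finite then so is every size-$10$ subquiver. For the converse I would argue by contraposition. Assume $S$ is mutation-infinite and pick a subquiver $S_0 \subseteq S$ of smallest order among mutation-infinite subquivers of $S$. By construction $S_0$ is minimal mutation-infinite in the sense of Definition~\ref{definf}, so Lemma~\ref{le10} gives $|S_0| \le 10$. Since $|S| \ge 10$, I can choose any subquiver $S_1 \subseteq S$ with $S_0 \subseteq S_1$ and $|S_1| = 10$ (extending $S_0$ by arbitrary additional vertices of $S$). If $S_1$ were mutation-finite, the observation above would force its subquiver $S_0$ to be mutation-finite as well, contradicting the choice of $S_0$. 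Hence $S_1$ is a mutation-infinite subquiver of $S$ of order exactly $10$, as required.

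The main obstacle is of course Lemma~\ref{le10} itself, whose proof uses the full earlier classification machinery (Theorem~\ref{g8} together with Theorem~\ref{all}) and a finite computer check over the $5739$ quivers mutation-equivalent to $E_8^{(1,1)}$. Once that lemma is in hand, the theorem itself requires no further substantial work: only the routine commutation of mutation with restriction to a subquiver and the trivial combinatorial step of extending $S_0$ to a size-$10$ subquiver inside $S$.
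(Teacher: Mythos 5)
Your proposal is correct and follows essentially the same route as the paper: both arguments rest on Lemma~\ref{le10}, the hereditary property that subquivers of mutation-finite quivers are mutation-finite (which you additionally justify via the commutation of the mutation rule~\eqref{eq:MatrixMutation} with restriction), and the observation that a minimal mutation-infinite subquiver of order at most $10$ can be enlarged to a mutation-infinite subquiver of order exactly $10$. No gaps.
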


\begin{proof}
According to Definition~\ref{definf}, every mutation-infinite quiver contains some minimal mutation-infinite quiver as a subquiver. Thus, a quiver is mutation-finite if and only if it does contain minimal mutation-infinite subquivers. By Lemma~\ref{le10}, this holds if and only if all subquivers of order at most $10$ are mutation-finite. Since a subquiver of a mutation-finite quiver is also mutation-finite, the latter condition, in its turn, holds if and only if all subquivers of order $10$ are mutation-finite, which completes the proof.

\end{proof}

\section{Growth of skew-symmetric cluster algebras}
\label{growth}

We recall the definition of growth of cluster algebra~\cite[Section~11]{FST}.
\begin{definition}
A cluster algebra has \emph{polynomial growth} if the number of
distinct seeds which can be obtained from a fixed initial seed by at most $n$ mutations is bounded from above by
a polynomial function of~$n$.  A cluster algebra has \emph{exponential growth}
if the number of such seeds is bounded from below by an exponentially
growing function of~$n$.
\end{definition}

In~\cite[Proposition~11.1]{FST} a complete classification of block-decomposable quivers corresponding to algebras of polynomial growth is given.
It occurs that growth is polynomial if and only if the surface corresponding to a quiver is a sphere with at most three punctures and boundary components in total.

We prove the following theorem.

\begin{theorem}
\label{inf-exp}
Any mutation-infinite skew-symmetric cluster algebra has exponential growth.
\end{theorem}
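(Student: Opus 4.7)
The plan is to reduce the problem to exhibiting exponential growth for a small acyclic wild subquiver inside a quiver mutation-equivalent to $S$, and then to invoke the categorification of acyclic cluster algebras.

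First, I establish a subquiver lifting principle. For an induced subquiver $S' \subseteq S$ and any vertex $k \in S'$, the mutation rule gives $\mu_k(S)|_{S'} = \mu_k(S')$, so any mutation sequence at vertices of $S'$ commutes with restriction. Consequently, distinct exchange matrices in the mutation class of $S'$ reachable in $n$ mutations lift to distinct exchange matrices (hence distinct seeds) of $\A(S)$ reachable in $n$ mutations, so exponential growth of $\A(S')$ implies exponential growth of $\A(S)$. We may assume $S$ is connected by passing to a mutation-infinite connected component.

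Second, I extract a wild acyclic rank-$3$ subquiver. By Theorem~\ref{less3}, the mutation class of $S$ contains a quiver $S''$ carrying an edge $e$ of weight at least $3$, with endpoints $a,b$. Since quivers of order at most $2$ are mutation-finite, $|S''|\geq 3$, and by connectedness some vertex $c$ is adjacent to $\{a,b\}$, yielding a connected rank-$3$ induced subquiver $R \subseteq S''$ that still contains $e$. Applying Theorem~\ref{less3} to $R$, the mutation class of $R$ is infinite. If $R$ is a cyclically oriented triangle, a single mutation at one of its vertices replaces it by an acyclic rank-$3$ quiver in the same mutation class (equally useful for the lifting principle). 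The classification of acyclic rank-$3$ skew-symmetric quivers of finite or affine Kac--Moody type admits only the path $A_3$ and the acyclic orientation of $\widetilde A_2$, both of which are mutation-finite; hence any acyclic, mutation-infinite rank-$3$ quiver is of strictly hyperbolic (wild) Kac--Moody type. Thus I may take $R$ acyclic and wild.

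Third, I conclude exponential growth for $\A(R)$ via categorification. By Buan--Marsh--Reineke--Reiten--Todorov and Caldero--Keller, for an acyclic quiver $R$ the exchange graph of $\A(R)$ is identified with the graph of cluster-tilting objects in the cluster category $\mathcal{C}_R$, and cluster variables correspond to indecomposables of $\mathcal{C}_R$. When $R$ is wild, $\operatorname{mod} R$ contains exponentially many indecomposables of bounded dimension vector, and this exponential complexity propagates to the exchange graph: the $n$-ball about any seed has size at least $2^{cn}$ for some $c>0$. Lifting via the first paragraph yields exponential growth of $\A(S)$.

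The main obstacle is the third step: the elementary quiver-combinatorial tools developed in the previous sections do not produce an exponential lower bound on the $n$-ball in the exchange graph of a wild acyclic cluster algebra, and one genuinely needs representation-theoretic categorification. A purely combinatorial alternative would require proving that no nontrivial mutation word $\mu_{i_1}\cdots\mu_{i_k}$ fixes a seed of a wild rank-$3$ acyclic cluster algebra, so that its exchange graph coincides with the Cayley graph of $\Z/2 \ast \Z/2 \ast \Z/2$ modulo only the involutive relations; but this freeness statement is essentially the categorification input in disguise.
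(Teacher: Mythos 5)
Your reduction to rank $3$ (the lifting principle for induced subquivers plus Theorem~\ref{less3}) is sound and is essentially the paper's Remark~\ref{inf3}. But the rest of the argument has two genuine gaps. First, the passage to an \emph{acyclic} rank-$3$ quiver fails: it is not true that a single mutation turns a cyclically oriented triangle into an acyclic quiver. There exist mutation-infinite rank-$3$ quivers that are \emph{cluster-cyclic}, i.e.\ every quiver in their mutation class is an oriented triangle; the oriented triangle with weights $(3,3,3)$ is the standard example (mutation sends it to $(3,3,6)$, again oriented, and so on --- this is exactly the dichotomy of~\cite{ABBS},~\cite{BBH} that the paper invokes in Section~\ref{q3}). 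Since your rank-$3$ subquiver $R$ can perfectly well be such a triangle, the wild-acyclic case does not cover all cases, and no categorification of \emph{acyclic} cluster algebras will reach the cluster-cyclic ones. Second, even in the acyclic wild case, the step you yourself flag as the ``main obstacle'' is indeed not a proof: the existence of exponentially many indecomposables in $\operatorname{mod} R$ does not by itself bound from below the number of seeds reachable in $n$ mutations from the initial one; one needs a no-return statement for the mutation action on seeds, which you assert but do not establish.

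The paper avoids both problems by staying entirely combinatorial and never assuming acyclicity. Given any mutation-infinite rank-$3$ quiver, Lemma~\ref{out} normalizes it by at most $4$ mutations to an oriented triangle with all weights at least $2$ and a unique edge of maximal weight. Lemma~\ref{max} shows that any mutation preserving the current maximal weight strictly increases the new maximal weight and reproduces this normal form, and Lemma~\ref{tree} shows that distinct reduced mutation sequences of this kind yield pairwise distinct quivers: the maximal-weight edge of the result sits opposite the vertex last mutated, so every such sequence can be uniquely reversed. Since a reduced sequence admits two admissible choices at every step, this produces at least on the order of $2^{n}$ distinct exchange matrices, hence distinct seeds, in the $n$-ball, and the lifting principle finishes the proof. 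If you want to salvage your route, you must either treat the cluster-cyclic triangles by a separate growth argument on their weight recursion or adopt the paper's monotonicity argument, which handles the cyclic and acyclic cases uniformly and requires no representation theory.
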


Combining these two results, we see that to classify all cluster algebras of polynomial growth we need only to
determine the growth of $11$ exceptional mutation-finite algebras listed in Theorem~\ref{all}. Three of them, namely
$E_6$, $E_7$ and $E_8$ are of finite type, so they have finite number of seeds. Other three ($\widetilde E_6$, $\widetilde E_7$ and $\widetilde E_8$)
are of affine type, so they have linear growth (according to H.~Thomas). Therefore, there are still $5$ algebras for which the growth is unknown. 

In other words, to complete the classification of cluster algebras by the growth rate 
it remains to ascertain the rates of growth in the following five cases $X_6$, $X_7$, $E_6^{(1,1)}$, $E_7^{(1,1)}$, and $E_8^{(1,1)}$.

A sequence of cluster transformations preserving the exchange matrix defines \emph{a group-like element}. The set of all group-like elements form \emph{generalized modular group}.

Using ideas similar to the proof of famous Tits alternative, it can be proved that in all five cases the growth rate of the generalized modular group is exponential. More precisely, studying the attracting points of some induced action it can be proved that the generalized modular group 
contains as a subgroup the free group of rank two.

Details on the study of growth rates in exceptional cases will be published elsewhere.

\begin{cor}
A skew-symmetric cluster algebra of rank at least $3$ has a polynomial growth if and only if
\begin{itemize}
  \item it is associated with triangulation of either a sphere with three punctures, either a disk with two punctures, either an annulus with one puncture, or a pair of pants;
  \item it is one of the following exceptional affine cases: $\tilde E_6,\tilde E_7,\tilde E_8$. 
\end{itemize}
\end{cor}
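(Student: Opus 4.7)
My plan is to combine three inputs: Theorem~\ref{inf-exp} (mutation-infinite implies exponential growth), the classification Theorem~\ref{all} of mutation-finite skew-symmetric cluster algebras of rank at least $3$, and the polynomial-growth classification of block-decomposable quivers from~\cite[Proposition~11.1]{FST}. Theorem~\ref{inf-exp} immediately reduces the problem to the mutation-finite case, and Theorem~\ref{all} then splits the remaining analysis into two parts: the block-decomposable (surface) family on the one hand, and the eleven exceptional mutation classes on the other.

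For the block-decomposable case, I would invoke~\cite[Proposition~11.1]{FST}, which asserts that polynomial growth is equivalent to the underlying bordered surface being a sphere with at most three punctures and boundary components in total. Under the rank at least $3$ hypothesis, and with the convention that the finite-type specimens (polygons and once-punctured polygons) are subsumed as degenerate polynomial growth, this leaves precisely the four surfaces listed in the Corollary: the thrice-punctured sphere, the twice-punctured disk, the once-punctured annulus, and the pair of pants.

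It remains to determine the growth of each of the eleven exceptional mutation classes. Three of them, $E_6$, $E_7$, $E_8$, are of finite Cartan--Killing type by Theorem~\ref{thm:FinTypeClass}, so the associated cluster algebras have only finitely many seeds (polynomial growth of degree zero). For the three affine types $\widetilde E_6, \widetilde E_7, \widetilde E_8$, linear growth follows from the categorification interpretation due to Thomas. The main obstacle is to rule out polynomial growth for the remaining five classes $E_6^{(1,1)}, E_7^{(1,1)}, E_8^{(1,1)}, X_6, X_7$. For these I would follow the strategy outlined at the end of Section~\ref{growth}: consider the subgroup of group-like elements (sequences of mutations stabilizing the exchange matrix) inside the generalized modular group, and then, in the spirit of the Tits alternative, analyze an induced action of this subgroup on an appropriate geometric object; by locating the attracting and repelling fixed points of two suitable hyperbolic elements one produces, after passing to sufficiently high powers, a ping-pong pair that generates a free subgroup of rank two. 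The existence of such a free subgroup forces the number of distinct seeds reachable in $n$ mutations to grow at least exponentially in $n$, which together with the first two steps completes the classification.
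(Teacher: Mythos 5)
Your proposal follows essentially the same route as the paper: reduce to the mutation-finite case via Theorem~\ref{inf-exp}, invoke \cite[Proposition~11.1]{FST} for the block-decomposable (surface) quivers, dispose of $E_6,E_7,E_8$ as finite type and $\widetilde E_6,\widetilde E_7,\widetilde E_8$ as affine of linear growth, and rule out polynomial growth for $X_6$, $X_7$, $E_n^{(1,1)}$ by exhibiting a free subgroup of rank two in the generalized modular group via a Tits-alternative/ping-pong argument. This matches the paper's argument, including the fact that the free-subgroup step for the five remaining classes is only sketched there, with details deferred to a later publication.
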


\begin{remark} 
Note that by construction any cluster algebra of rank $2$ is either of finite type of has a linear (i.e., polynomial) growth rate.
\end{remark}








\smallskip

The rest of this section is devoted to the proof of Theorem~\ref{inf-exp}.

\begin{remark}
\label{inf3}
It is sufficient to prove  Theorem~\ref{inf-exp} for cluster algebras corresponding to mutation-infinite quivers of order $3$.
Indeed, any mutation-infinite quiver $S$ has a mutation-equivalent quiver $S'$ with an edge of weight at least $3$.
According to Theorem~\ref{less3}, any connected subquiver $S_0\subset S$ of order $3$ containing that edge is mutation-infinite.
Therefore, it is enough to show that the algebra corresponding to $S_0$ grows exponentially.
\end{remark}

In fact, we prove some stronger result. Denote by $S^{(n)}$ the set of quivers which can be obtained from a quiver $S$ by at most $n$ mutations.
According to Remark~\ref{inf3}, the following lemma implies Theorem~\ref{inf-exp}.

\begin{lemma}
\label{exp}
Let $S$ be a mutation-infinite quiver of order $3$. Then the order $|S^{(n)}|$ grows exponentially with respect to $n$.
\end{lemma}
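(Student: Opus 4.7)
Since $|S|=3$, the mutation graph of $S$ is $3$-regular and each mutation is an involution, so the number of non-backtracking walks of length $n$ starting at $S$ is $3\cdot 2^{n-1}$. The plan is to show that beyond a bounded initial segment, the map sending each such walk to its resulting quiver is injective, which yields $|S^{(n)}|\ge 2^{n-C}$ for some absolute constant $C$ and hence exponential growth.

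First I would reduce to the large-weight regime. By Theorem~\ref{less3}, the mutation class of $S$ contains a quiver with an edge of weight at least $3$; replacing $S$ by such a quiver (a bounded number of mutations away) only changes $|S^{(n)}|$ by a multiplicative constant. I would then parameterize rank-$3$ skew-symmetric quivers by the unordered triple of edge weights $(a,b,c)$ together with an orientation type (cyclic or acyclic) and write the mutation rules in these coordinates. For cyclic quivers the Markov-type quantity $M(a,b,c)=a^2+b^2+c^2-abc$ is invariant under mutation, and mutation-infiniteness together with the presence of an edge of weight $\ge 3$ forces $M$ to be strictly larger than $4$, so that weight triples in the mutation class are unbounded.

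The core technical step is to show that, once weights are large enough, a non-backtracking mutation is recoverable from the resulting quiver. Consider a quiver with weights $(a,b,c)$ and $c$ strictly maximal by a wide margin. The mutation at the vertex opposite the $c$-edge sends $c$ to $|ab-c|$, which for large weights is smaller than $c$, while mutations at either other vertex keep $c$ maximal and produce a new weight exceeding $c$. Iterating, the unique maximal weight in the current quiver records which edge was modified by the last mutation, and the Markov invariant then determines the discarded weight and hence the full immediate predecessor. By induction, two distinct non-backtracking walks of length $n$ from a large-weight starting quiver reach distinct quivers, giving at least $2^{n-1}$ elements of $S^{(n)}$.

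The main obstacle I anticipate is the boundary behaviour between the ``small weight'' and ``large weight'' regimes: the Markov recursion is clean once one weight dominates, but near the threshold (say when two weights are comparable) one has to check that non-backtracking mutations still strictly increase the maximum. I would handle this by an explicit finite case analysis, showing that after at most a constant number of mutations from any starting seed with an edge of weight $\ge 3$, every reachable quiver has two weights whose product exceeds the third by a factor bounded away from $1$; from that point on the recursion $c\mapsto ab-c$ is strictly expanding and the injectivity argument above applies. Combining these ingredients yields $|S^{(n)}|\ge 2^{n-C}$, which by Remark~\ref{inf3} is enough to establish Theorem~\ref{inf-exp}.
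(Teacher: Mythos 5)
Your plan is essentially the paper's proof. The paper first normalizes $S$ by at most four mutations to an oriented quiver with all weights at least $2$ and a unique maximal weight (Lemma~\ref{out}), and then shows (Lemmas~\ref{max} and~\ref{tree}) that along any reduced mutation sequence whose first step is made at a vertex incident to the maximal edge, the maximal weight strictly increases and always sits on the last-modified edge; the sequence is therefore recoverable from its endpoint, and the $2^n$ such sequences yield distinct quivers. Two inaccuracies in your write-up are worth flagging, though neither is fatal. First, the assertion that mutating at the vertex opposite the maximal edge sends $c$ to $|ab-c|<c$ is false in general: for weights $(c-1,c-1,c)$ this mutation produces $c^2-3c+1$, which exceeds $c$ once $c\ge 4$. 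Consequently your recovery rule does not apply to arbitrary non-backtracking walks; but it is not needed for them, since restricting to walks all of whose steps are made at vertices of the current maximal edge already gives at least $2^{n-1}$ walks, and for these the required monotonicity ($ac-b\ge 2c-b>c$ whenever $a\ge 2$ and $b<c$) holds with no ``wide margin'' or threshold analysis at all --- which is exactly why the paper's Lemma~\ref{out} only needs to arrange all weights $\ge 2$ and a unique maximum before starting the induction. Second, the Markov invariant is superfluous: once the last-mutated vertex is identified as the one opposite the unique maximal edge, the immediate predecessor is recovered simply by mutating there again, mutation being an involution.
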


The proof of Lemma~\ref{exp} splits into two steps. We start by proving the following lemma.
By saying that a quiver is {\it oriented} we mean that all the cycles are oriented.

\begin{lemma}
\label{out}
For any mutation-infinite quiver $S$ of order $3$ there exists a sequence of at most $4$ mutations taking $S$ to $S'$ such that

{\rm(1)} $S'$ is oriented;

{\rm(2)} all the weights of edges of $S'$ are greater than $1$;

{\rm(3)} an edge of maximal weight is unique.

\end{lemma}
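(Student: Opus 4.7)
The plan is a two-stage reduction. In the first stage (at most one mutation) I reduce $S$ to a cyclic (oriented) triangle; in the second stage (at most two further mutations) I adjust the weights of this triangle to satisfy (2) and (3). The total will be at most three, well within the claimed bound of four.

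Stage 1. The hypothesis that $S$ is mutation-infinite excludes the case of at most one edge (a quiver on three vertices with $\leq 1$ edges is mutation-finite), so $S$ has either two edges (a directed path) or three edges (either a cyclic or a source-sink triangle). A direct application of the mutation formula shows that mutation at the middle vertex of a path with edge weights $p,q$ gives the cyclic triangle with weights $(p,q,pq)$, while mutation at the unique vertex of a source-sink triangle that is neither source nor sink (with adjacent weights $p,q$ and opposite weight $r$) gives the cyclic triangle $(p,q,r+pq)$. If $S$ is already cyclic, no mutation is used. In all sub-cases, mutation-infiniteness is preserved.

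Stage 2. Let $S_1=(a,b,c)$ be the cyclic triangle from Stage 1, with the weights sorted so that $a\leq b\leq c$, and recall the explicit rule: mutation at the vertex opposite weight $\gamma$ (with adjacent weights $\alpha,\beta$) replaces $\gamma$ by $|\alpha\beta-\gamma|$ and preserves cyclicity iff $\gamma<\alpha\beta$. I distinguish four subcases, each settled by at most two mutations:
\begin{itemize}
\item[(a)] $a\geq 2$ and $b<c$: $S_1$ already satisfies (1)--(3).
\item[(b)] $a\geq 2$ and $b=c$: mutate at the vertex opposite $a$ to get weights $(b^{2}-a,b,b)$; the strict inequality $b^{2}-a>b$, equivalent to $a<b(b-1)$, fails only when $(a,b)=(2,2)$, which is the Markov triangle and hence mutation-finite, excluded by hypothesis.
\item[(c)] $a=1$ and $b\geq 2$: mutation at the vertex opposite $a$ preserves cyclicity (since $1<bc$) and gives weights $(bc-1,b,c)$ with $bc-1>c\geq b\geq 2$.
\item[(d)] $a=b=1$, so $S_1=(1,1,c)$: since the mutation classes of $(1,1,1)$ (type $A_3$) and $(1,1,2)$ are both finite, the hypothesis forces $c\geq 3$. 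Mutating at the vertex opposite $b=1$ preserves cyclicity (as $1<c$) and produces $(1,c-1,c)$, reducing to case (c); a further mutation then yields $(c(c-1)-1,c-1,c)$, which satisfies (2) and (3) because $c(c-1)-1>c$ holds precisely for $c\geq 3$.
\end{itemize}

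The main technical obstacle is to correctly rule out the small mutation-finite cyclic triangles that would otherwise break the argument: the Markov triple $(2,2,2)$ in case (b) and $(1,1,1)$, $(1,1,2)$ in case (d). Each exclusion is a short but essential finite computation with the mutation rule; once they are settled, the remainder of the argument is routine bookkeeping to verify that each listed mutation preserves cyclicity and that no new weight equals $1$ or ties the new maximum.
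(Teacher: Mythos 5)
Your two-stage plan is essentially the paper's own strategy (first reach an oriented triangle, then adjust weights), and your Stage 2 case analysis is correct --- indeed it is slightly more careful than the paper's, since you treat the triangles $(1,1,c)$ explicitly. However, Stage 1 contains a genuine gap. A quiver on three vertices with exactly two edges is not always a \emph{directed} path: the middle vertex may be a source or a sink (e.g.\ $u\to v\leftarrow w$). In that case the mutation formula at the middle vertex gives
$B'_{uw}=0+\tfrac{|B_{uv}|B_{vw}+B_{uv}|B_{vw}|}{2}=\tfrac{p(-q)+pq}{2}=0$,
so no third edge is created; the mutation merely reverses the two arrows and you remain with a two-edge quiver. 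Your claim that one mutation at the middle vertex always produces the cyclic triangle $(p,q,pq)$ therefore fails for exactly these orientations, and your ``at most three mutations'' count is not established as written.

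The repair is cheap and is precisely what the paper does: first spend one mutation at an endpoint (a source or sink) to reverse one arrow and make the path consistently directed, and only then mutate at the middle vertex. This costs Stage 1 up to two mutations instead of one; combined with your Stage 2 bound of two, the total is still at most four, so the lemma's stated bound survives. Aside from this missing sub-case, your exclusions of the mutation-finite triangles $(2,2,2)$, $(1,1,1)$, $(1,1,2)$ and the inequalities $b^2-a>b$, $bc-1>c$, $c(c-1)-1>c$ are all correct.
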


\begin{proof}
First, we make $S$ oriented without empty edges. For this, we need at most $2$ mutations.
Indeed, if $S$ is non-oriented without empty edges, then the mutation in a unique vertex which is neither sink nor source leads to an oriented quiver.
If $S$ has an empty edge, then by at most one mutation we put sink and source to the ends, and then, mutating in the middle vertex, we get a required quiver.

Thus, we can assume now that $S$ is oriented without empty edges, and we have two mutations left to satisfy conditions $(2)$ and $(3)$.
Denote the weights of $S$ by $(a,b,c)$ with $a\ge b\ge c$.
Since $S$ is mutation-infinite, $S$ does not coincide with any of the three mutation-finite quivers of order $3$ shown on Fig.~\ref{fin-mut3}.
\begin{figure}[!h]
\begin{center}
\epsfig{file=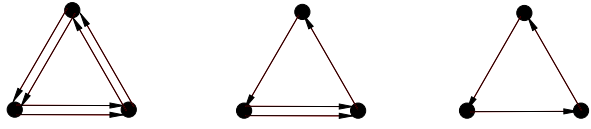,width=0.5\linewidth}
\caption{Oriented mutation-finite quivers of order $3$ without empty edges}
\label{fin-mut3}
\end{center}
\end{figure}
In particular, $a\ge b\ge 2$. We may assume that either $a=b$ or $c=1$.
If $c=1$, then making a mutation preserving $a$ and $b$, we obtain an oriented quiver with weights $(a,b,c'=ab-c)$.
Clearly, $c'=ab-c\ge 3$, so the condition $(2)$ holds.

Now we have a quiver satisfying the first two conditions, and one mutation left to satisfy condition $(3)$.
Again, let  the weights of $S$ be $(a,a,c)$ with $a\ge c$. If $c=2$, then we make a mutation changing $c$ to get
a quiver with weights $(a,a,c'=a^2-c)$. Since $S$ is mutation-infinite, $a>2=c$, therefore $c'=a^2-2>a$, so the third condition is satisfied.
If $c>2$, then mutating in any of the two other vertices, we get a quiver with weights  $(a,b=ac-a,c)$. Clearly, $b=ac-a>a$ since $c>2$.

\end{proof}

The last step in proving Lemma~\ref{exp} is Lemma~\ref{tree}.
We say that a sequence of mutations is {\it reduced} if it does not contain two consecutive mutations in the same vertex. Note also that we differ quivers with the same weights but different orientations.

\begin{lemma}
\label{tree}
Let $S$ be mutation-infinite quiver of order $3$ satisfying conditions $(1)-(3)$ of Lemma~\ref{out}, denote by $(a,b,c)$ the weights of edges of $S$, $a>b\ge c$. Let $S_1$ and $S_2$ be quivers obtained from $S$ by different reduced sequences of mutations, such that the first mutation in each sequence preserves the weight $a$ of $S$. Then $S_1$ and $S_2$ are distinct.

\end{lemma}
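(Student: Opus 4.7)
The strategy is to show that any reduced mutation sequence $\sigma = (k_1, \dots, k_n)$ applied to $S$ whose first mutation preserves $a$ leaves a rigid ``fingerprint'' on the resulting quiver $S_n$, from which the sequence can be read off backwards. Concretely, I would prove by induction on $n$ the following structural invariant for $n \ge 1$: (i) $S_n$ is an oriented triangle with all weights $\ge 2$; (ii) $S_n$ has a unique maximum weight $M_n$, strictly greater than $M_{n-1}$ (with $M_0 = a$); and (iii) the edge carrying $M_n$ is precisely the edge opposite the just-mutated vertex $k_n$.

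The base case $n = 1$ is a direct computation: since $k_1$ is an endpoint of the unique max edge of $S$, the mutation changes the weight of the opposite edge from some $w \in \{b,c\}$ to $aw' - w$, where $w' \ge 2$ is the other weight at $k_1$; since $w < a$ and $w' \ge 2$, we get $aw' - w \ge 2a - w > a$, verifying both the strict increase and the uniqueness. For the inductive step, the reduced condition $k_{n+1} \ne k_n$, combined with invariant (iii) at step $n$ (which places $k_n$ opposite the max edge of $S_n$), forces $k_{n+1}$ to be one of the two endpoints of the max edge of $S_n$. Writing the weights at $k_{n+1}$ as $M_n$ and $w' \ge 2$, with opposite weight $w < M_n$, the new weight on the edge opposite $k_{n+1}$ is $M_n w' - w$; the inequality $M_n(w'-1) \ge M_n > w$ gives $M_n w' - w > M_n$, so this becomes the unique maximum $M_{n+1}$, located on the just-mutated edge. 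The same estimate $M_n w' > w$ ensures that the mutation formula produces a nonzero opposite-edge entry, keeping the triangle cyclically oriented with all weights positive and $\ge 2$.

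With the invariant in hand, the lemma follows by recovering the sequence from the final quiver. Suppose $S_1 = S_2$ arose from two such sequences $\sigma_1, \sigma_2$ of lengths $n, m$. Strict monotonicity of $M_\bullet$ forces $n = m$, since the unique maxima of $S_1$ and $S_2$ must coincide. Invariant (iii) identifies the last mutation vertex as the one opposite the common max edge of $S_1 = S_2$, so $k_n = k_m'$. Inverting this mutation gives $S_{n-1} = S_{m-1}'$ from shorter reduced sequences whose first mutation still preserves $a$, and induction on sequence length concludes $\sigma_1 = \sigma_2$. The main technical obstacle is establishing the structural invariant cleanly---one must simultaneously maintain the lower bound $\ge 2$ on all weights, the strict uniqueness of the maximum, and the cyclic orientation across arbitrarily many mutations, all of which rest on the single inequality $M_n w' - w > M_n$ used above.
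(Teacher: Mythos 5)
Your proof is correct and follows essentially the same route as the paper: the structural invariant you maintain by induction is exactly the content of the paper's Lemma~\ref{max} (each mutation preserving the current maximum produces a strictly larger unique maximum on the edge opposite the mutated vertex, while keeping the quiver oriented with all weights at least $2$), and the backward recovery of the mutation sequence from the position of the unique maximal edge is precisely the paper's proof of Lemma~\ref{tree}. One nitpick: equality of the maxima of $S_1$ and $S_2$ does not by itself force $n=m$ (different sequences increase the maximum at different rates), but your backward-peeling induction repairs this automatically, since a length mismatch would equate $S$ with an intermediate quiver whose maximal weight exceeds $a$.
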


Clearly, Lemma~\ref{tree} together with Lemma~\ref{out} imply Lemma~\ref{exp}. Before proving Lemma~\ref{tree}, we provide the following auxiliary statement.

\begin{lemma}
\label{max}
Let $S$ fit into assumptions of Lemma~\ref{tree}. Suppose that $S'$ is obtained from $S$ by mutation preserving the weight $a$. Then

$1)$ the maximal weight of $S'$ is greater than $a$;

$2)$ $S'$ satisfies conditions $(1)-(3)$ of Lemma~\ref{out}.

\end{lemma}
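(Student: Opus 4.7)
The plan is a direct computation using the mutation rule, organized into two symmetric cases with one key arithmetic inequality. Label the vertices $v_1,v_2,v_3$ so that $|B_{12}|=a$, $|B_{13}|=b$, $|B_{23}|=c$, and, using condition~(1) of Lemma~\ref{out} together with a relabeling if necessary, assume the oriented $3$-cycle $v_1\to v_2\to v_3\to v_1$, i.e.\ $B_{12}=a$, $B_{23}=c$, $B_{31}=b$. A mutation at $v_k$ is the only one that alters the weight of the edge opposite $v_k$, so the mutations preserving $a$ are $\mu_{v_1}$ and $\mu_{v_2}$; by the $b\leftrightarrow c$ symmetry I will treat $\mu_{v_1}$ in detail and merely record the analogue for $\mu_{v_2}$.

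Applying formula~\ref{eq:MatrixMutation} at $v_1$ flips the signs of $B_{12}$ and $B_{13}$ (so the weights $a,b$ are preserved while their edges reverse direction) and recomputes $B'_{23}=c-ab$. Since $a,b\ge 2$ we have $ab>c$, so the new weight triple is $(a,b,ab-c)$, and the combined sign changes yield the oriented triangle $v_2\to v_1\to v_3\to v_2$, which verifies condition~(1) of Lemma~\ref{out}. The symmetric computation for $\mu_{v_2}$ produces weights $(a,ac-b,c)$ on an oriented triangle. The heart of the argument is then the inequality $ab-c>a$, which rearranges to $a(b-1)>c$: from $b\ge 2$ we get $a(b-1)\ge a$, and $a>b\ge c$ upgrades this to a strict inequality. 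Symmetrically $ac-b>a$ reduces to $a(c-1)>b$ and is valid because $c\ge 2$ and $a>b$; together these give claim~$1)$ of the lemma.

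Claims $(2)$ and $(3)$ of Lemma~\ref{out} (via claim~$2)$ of the present statement) then follow immediately: in both cases the two preserved weights are $\ge 2$ by hypothesis and the newly created weight is $>a\ge 3$, so all three weights of $S'$ exceed $1$ (condition~(2)); the new maximum $ab-c$ (respectively $ac-b$) strictly exceeds the other two weights $a$ and $b$ (respectively $a$ and $c$), so it is unique (condition~(3)). The only substantive step in the whole argument is the inequality in the second paragraph; I expect no real obstacle beyond making sure that all three hypotheses $a>b$, $b\ge 2$, and $c\ge 2$ are genuinely used there and not silently assumed elsewhere.
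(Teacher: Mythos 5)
Your proof is correct and follows essentially the same route as the paper's: compute the mutated weight as $ab-c$ (resp.\ $ac-b$) and observe that $ab-c>a$ follows from $b\ge 2$ and $c<a$ (resp.\ $ac-b>a$ from $c\ge 2$ and $b<a$). The paper's version is terser (it declares statement 2 ``evident''), while you additionally track the sign changes to confirm the mutated triangle stays oriented, but the substance is identical.
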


\begin{proof}
To prove the first statement, compute the weights of $S'$. If we preserve weights $a$ and $b$, then weights of $S'$ are $(a,b,c'=ab-c)$, so $c'>a$ since $b\ge 2$ and $c<a$. If we preserve weights $a$ and $c$, then weights of $S'$ are $(a,b'=ac-b,c)$, so $b'>a$ since $c\ge 2$ and $b<a$.

Now the second statement is evident.

\end{proof}

The following immediate corollary of Lemma~\ref{max} is a partial case of Lemma~\ref{tree}.

\begin{cor}
\label{dif}
Let $S$ fit into assumptions of Lemma~\ref{tree}, and let $\mu_n\dots\mu_1$ be a reduced sequence of mutations, where $\mu_1$ preserves the maximal weight of $S$. Denote by $S_i$ the quiver $\mu_i\dots\mu_1S$. Then all the quivers $S,S_1,\dots,S_n$ are distinct.

\end{cor}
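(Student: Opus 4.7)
The plan is to track the maximum edge weight along the sequence and show it is strictly increasing, which immediately yields that $S, S_1, \dots, S_n$ are pairwise distinct. The key technical point is that whenever a mutation of a rank-$3$ quiver preserves the maximum weight, the \emph{location} of the (new, strictly larger) maximum edge in the resulting quiver is forced to lie on the edge opposite to the mutation vertex. This, combined with reducedness, propagates the ``preserves the maximum'' property down the whole sequence.

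I would proceed by induction on $i$, establishing the following three assertions simultaneously for every $i \ge 1$: the quiver $S_i$ satisfies conditions $(1)$--$(3)$ of Lemma~\ref{out}; its unique maximum-weight edge $e_i$ is the edge opposite to the vertex $v_i$ at which $\mu_i$ is performed; and if $i < n$, then $\mu_{i+1}$ preserves the maximum weight of $S_i$. The base case $i=1$ is immediate: by hypothesis $\mu_1$ preserves the maximum of $S$, so Lemma~\ref{max} yields that $S_1$ satisfies $(1)$--$(3)$ and has strictly larger maximum; furthermore, the two edges of $S_1$ incident to $v_1$ retain the weights they had in $S$ (each at most $a$), so the unique maximum of $S_1$ must sit on the remaining edge, which is the one opposite to $v_1$.

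For the inductive step, suppose the claim holds up to level $i$. Since the sequence is reduced, $v_{i+1} \ne v_i$, so $\mu_{i+1}$ acts at one of the two endpoints of $e_i$. Because $e_i$ is incident to $v_{i+1}$ and its weight is the (unique) maximum of $S_i$, the mutation $\mu_{i+1}$ preserves this maximum. Applying Lemma~\ref{max} again, $S_{i+1}$ satisfies $(1)$--$(3)$ and its maximum weight is strictly larger than that of $S_i$; the same ``two edges incident to $v_{i+1}$ keep their weights'' argument as in the base case forces the new unique maximum onto the edge opposite to $v_{i+1}$, completing the induction. Consequently the maxima form a strictly increasing chain $a < a_1 < a_2 < \dots < a_n$, so all the quivers in the sequence are pairwise distinct.

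The only delicate point in this plan is verifying that the new maximum lies precisely on the opposite edge to $v_i$, but as noted above this is forced rather than calculated: Lemma~\ref{max}$(1)$ supplies the strict jump in the maximum, while the two edges at $v_i$ are pointwise unchanged, so the maximum has nowhere else to go. Everything else is a clean bookkeeping of Lemma~\ref{max} along the sequence.
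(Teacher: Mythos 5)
Your proof is correct and matches the paper's intended argument: the paper treats the corollary as immediate from Lemma~\ref{max}, and the substance you supply — that the new, strictly larger maximum sits on the edge opposite the mutation vertex, so reducedness forces each subsequent mutation to again preserve the maximum, yielding a strictly increasing chain of maximal weights — is exactly the observation the paper itself invokes in the proof of Lemma~\ref{tree}.
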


\begin{proof}[Proof of Lemma~\ref{tree}]
Suppose $S_1$ and $S_2$ coincide. We may assume that any two quivers $S_1'$ and $S_2'$ in the sequences of quivers from $S$ to $S_1$ and $S_2$ respectively are distinct. Consider preceding quivers $S_1'$ and $S_2'$ in the sequences of quivers from $S$ to $S_1$ and $S_2$. By Lemma~\ref{max}, both $S_1$ and $S_2$ satisfy the following property: the edge of the maximal weight is opposite to the vertex in which the last mutation was made. Therefore, $S_1'$ and $S_2'$ coincide also. The contradiction proves the lemma.

\end{proof}

\section{Quivers of order $3$}
\label{q3}

The structure of mutation classes of quivers of order $3$ was described
in~\cite{ABBS} and~\cite{BBH}. These papers provide complete classification of
mutation classes containing quivers without oriented cycles given in different terms.

Define the {\it total weight} of a quiver as the sum of the weights of edges. It is proved
in~\cite{ABBS} (see also~\cite[Lemma~2.1]{BBH}) that if a mutation class does not contain quivers without oriented cycles, then the mutation class contains a unique (up to duality) quiver of minimal total weight, and any other mutation-equivalent quiver can be reduced to that one in a unique way.

We complete the description of mutation classes containing quivers without oriented cycles by a similar statement. We use notation from~\cite{BBH}: a quiver $S$ of order $3$ is called {\it cyclic} if it is an oriented cycle, and {\it acyclic} otherwise; $S$ is called {\it cluster-cyclic} if any quiver mutation-equivalent to $S$ is cyclic, and {\it cluster-acyclic} otherwise.

\begin{theorem}
\label{umin}
Let $S$ be a connected cluster-acyclic quiver of order $3$. Then

$1)$ mutation class of $S$ contains a unique (up to change of orientations of edges) quiver $S_0$ without oriented cycle;

$2)$ the total weight of $S_0$ is minimal amongst all the mutation class;

$3)$ any sequence of mutations decreasing total weight at each step applied to $S$ ends in $S_0$.
\end{theorem}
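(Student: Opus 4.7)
My approach will follow and extend the strategy used in \cite{ABBS, BBH} for the cluster-cyclic setting: I will identify a weight-reducing mutation available from every cyclic quiver in the mutation class of $S$, and show that iterating such mutations terminates at an acyclic representative $S_0$, unique up to orientation. First I record the effect of mutation on weights. For a cyclic quiver of order $3$ with weights $(a,b,c)$ satisfying $a\ge b\ge c\ge 1$, a direct application of \eqref{eq:MatrixMutation} shows that the three mutations replace the edges of weight $a$, $b$, or $c$ by edges of weight $|bc-a|$, $|ac-b|$, or $|ab-c|$ respectively, with orientation flipping to acyclic (or an edge vanishing) precisely when the replaced weight would be nonpositive. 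For an acyclic quiver of order $3$, mutations at a source or a sink preserve all weights and merely flip orientations of the two incident edges, while mutation at the middle vertex yields a cyclic quiver whose third edge grows by the product of the other two weights, strictly increasing the total weight.

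The key arithmetic step is the reduction lemma: if $(a,b,c)$ with $a\ge b\ge c$ is a cyclic representative of a cluster-acyclic mutation class of rank $3$, then $bc<2a$ (with equality possible only in degenerate cases permitting reduction at the next step); consequently, the mutation at the vertex opposite the edge of weight $a$ strictly decreases the total weight. To prove the lemma I will invoke the Markov-type invariant $f(a,b,c)=a^2+b^2+c^2-abc$, which a direct check shows is preserved under cyclic-to-cyclic mutations, together with the classification of cluster-cyclic rank-$3$ classes from \cite{BBH} as exactly those satisfying $\min(a,b,c)\ge 2$ and $f(a,b,c)\le 4$. Cluster-acyclicity then forces either $c=1$---so that $bc=b\le a$ is automatic---or $b,c\ge 2$ with $f(a,b,c)>4$. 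In the latter case, viewing $f-4>0$ as a quadratic inequality in $a$ with $b,c\ge 2$ fixed and discriminant $(b^2-4)(c^2-4)\ge 0$, I check that $a$ being the maximum of the triple forces $a$ to exceed the larger root $(bc+\sqrt{(b^2-4)(c^2-4)})/2$, whence $2a>bc$.

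Parts (2) and (3) then follow by induction on total weight: starting from any cyclic quiver in the class and iteratively applying the weight-decreasing mutation produces a strictly decreasing sequence of positive integers which must terminate at a quiver admitting no weight-decreasing mutation; by the reduction lemma this terminal quiver cannot be cyclic, hence is an acyclic quiver $S_0$ with total weight no greater than any intermediate one. Any weight-decreasing sequence from $S$ therefore terminates at an acyclic representative of minimum total weight, establishing both parts. For part (1), since source/sink mutations of acyclic quivers preserve all weights, the orbit of $S_0$ under such mutations consists of acyclic quivers with the same underlying weighted graph. If another acyclic quiver $S_0'$ with different weights lay in the class, a mutation path connecting $S_0$ to $S_0'$ would have to traverse cyclic quivers; applying the reduction lemma along the path yields simultaneous weight-decreasing reductions to both $S_0$ and $S_0'$, and the symmetry analysis in cases of equal weights shows these reductions must coincide up to re-orientation, contradicting the supposed difference.

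The main obstacle is the reduction lemma: connecting the mutation-theoretic condition of cluster-acyclicity to the arithmetic inequality $bc<2a$ via the Markov invariant requires careful handling of the boundary cases $c=1$, the weight ties $a=b$ or $b=c$, and the degenerate $bc=2a$. A subtler point in the uniqueness argument for part (1) is verifying that alternative weight-decreasing directions, when several are available due to ties, lead to the same acyclic form up to re-orientation; this can be checked by direct inspection of the finitely many symmetric configurations $(a,a,c)$ and $(a,b,b)$.
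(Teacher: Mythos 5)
Your reduction lemma is correct and takes a genuinely different route from the paper's. The paper never invokes the Markov invariant $a^2+b^2+c^2-abc$ or the cluster-cyclicity criterion of \cite{BBH}; instead it grows the mutation class outward from the acyclic quiver $S_0$ and uses Lemmas~\ref{out}, \ref{max} and~\ref{tree} to show that every quiver in the class other than $S_0$ (and, when some weight equals $1$, at most two further exceptional quivers) lies in the set of oriented quivers with all weights at least $2$ and a unique strictly maximal edge, where exactly one mutation fails to increase the maximal weight; that tree structure is what delivers uniqueness in all three parts. Your inequality $bc<2a$ (whose derivation from $f>4$ via the quadratic in $a$ does go through, after checking that $a\ge b$ rules out the branch below the smaller root) correctly produces, for every cyclic quiver in a cluster-acyclic class, \emph{some} weight-decreasing mutation, hence termination of weight-decreasing sequences at acyclic quivers.

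The gap is in uniqueness, i.e.\ confluence of the weight-decreasing process, and you have mislocated where it can fail. For a cyclic $(a,b,c)$ with $a\ge b\ge c$, the mutation opposite $b$ is \emph{also} weight-decreasing precisely when $ac<2b$, which (given $a\ge b$) forces $c=1$ and $b\le a<2b$ --- a situation with no weight ties at all. For instance $(3,2,1)^+$ admits two weight-decreasing mutations: one gives the acyclic quiver with weights $(1,2,1)$ directly, the other gives the cyclic $(3,1,1)^+$, from which a further step reaches the acyclic $(2,1,1)$. These two terminal quivers agree up to isomorphism and orientation, but that must be \emph{proved}: by Newman's lemma one needs local confluence at every peak, and the relevant peaks form the infinite family $(a,b,1)$ with $b\le a<2b$ (where one checks both descents end at acyclic quivers with weight multiset $\{a-b,b,1\}$), not the "finitely many symmetric configurations $(a,a,c)$ and $(a,b,b)$" you propose to inspect --- in those tied configurations with all weights at least $2$ the alternative mutations in fact \emph{increase} or preserve the total weight, so they are not where the branching happens. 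The argument you sketch for part (1), reducing a peak of a path joining two acyclic quivers $S_0$ and $S_0'$, presupposes exactly this confluence; and without part (1), your conclusion that a weight-decreasing sequence terminates at an acyclic quiver "of minimum total weight" (part (2)) and at $S_0$ itself (part (3)) does not follow. The approach is salvageable --- the needed diamond computation at $(a,b,1)$, $b\le a<2b$, is short --- but as written the uniqueness statements are not established.
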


We use the following notation. By $(a,b,c)^-$ we mean a non-oriented cycle with weights $(a,b,c)$. We may not fix orientations of edges since any two such quivers are mutation-equivalent under mutations in sources or sinks only. Similarly, by $(a,b,c)^+$ we mean an oriented cycle with weights $(a,b,c)$.

\begin{proof}[Proof of Theorem~\ref{umin}]
Consider a connected acyclic quiver $S_0=(a,b,c)^-$ with $a\ge b\ge c$. Denote by $\mathcal S$ the set of quivers satisfying conditions \rm{(1)}--\rm{(3)} of Lemma~\ref{out}.

Suppose first that none of the weights is equal to $1$. If $c=0$, then the only quiver we can get by one mutation different from $S_0$ is $(a,b,ab)^+$, which is contained in $\mathcal S$. If $c\ge 2$, then we can obtain three possibly different quivers $(a,b,ab+c)^+$, $(a,ac+b,c)^+$ and $(bc+a,b,c)^+$ all belonging to $\mathcal S$. According to Lemma~\ref{max}, all other quivers from mutation class of $S_0$ also belong to $\mathcal S$, which proves the first statement. Moreover, Lemmas~\ref{tree} and~\ref{max} imply that for any quiver from the mutation class of $S_0$ belonging to $\mathcal S$ there is a unique reduced sequence of mutations decreasing the total weight at each step, and the minimal element is in $\mathcal S$ if and only if the entire mutation class is contained in $\mathcal S$ (this is also proved in~\cite[Lemma~2.1]{BBH}). Since $S_0$ is the only quiver not contained in $S$, all the statements are proved.

Now suppose that at least one of the weights of $S_0$ is $1$. We may assume that $S_0$ is mutation-infinite (there are exactly two acyclic mutation-finite quivers of order $3$, namely $\widetilde A_2$ and $A_3$, and the theorem is evident for them). Our aim is to show that almost all quivers mutation-equivalent to $S_0$ belong to $S$, then looking at the remaining quivers all the statements become evident. Indeed, $S_0$ is of one of the following three types:
$(a,1,0)^-$, $(a,1,1)^-$, or $(a,b,1)^-$, where $a\ge b\ge 2$. We list the quivers which can be obtained from that three by mutations.

The only way to change $(a,1,0)^-$ is to obtain $(a,a,1)^+$, from which we may get $(a,a,a^2-1)^+$ only which is in $\mathcal S$.

The quiver $(a,1,1)^-$ can be mutated into $(a+1,1,1)^+$ and $(a+1,a,1)^+$. The first one then can be mutated into the second one only, and the latter into the first one or into $(a,a+1,a^2+a-1)^+\in\mathcal S$.

The quiver $(a,b,1)^-$ can be mutated either into $(a,b,ab+1)^+\in\mathcal S$, or into $(a+b,b,1)^+$ or $(a+b,a,1)^+$. These two can be mutated either one into another or into quivers belonging to $\mathcal S$.

Therefore, in the mutation class of $S_0$ there is at most $3$ quivers not belonging to $\mathcal S$, and $S_0$ has minimal total weight amongst them. Applying the same arguments as in the first case, we complete the proof.

\end{proof}



\begin{thebibliography}{15}


\bibitem[ABBS]{ABBS} I.~Assem, M.~Blais, T.~Bruestle, A.~Samson, {\em Mutation classes of skew-symmetric $3\times 3$ matrices}, arXiv:math.RT/0610627, to appear in Comm. Alg.


\bibitem[BGZ]{BGZ}  M.~Barot, C.~Geiss, A.~Zelevinsky, {\em Cluster algebras of finite type and positive symmetrizable matrices},  J. London Math. Soc. (2) 73 (2006), 545--564.

\bibitem[BBH]{BBH} A.~Beineke, T.~Bruestle, L.~Hille, {\em Cluster-cyclic quivers with three vertices and the Markov equation}, arXiv:math/0612213, to appear in J. Pure Appl. Algebra


\bibitem[BFZ]{BFZ3} A.Berenstein, S. Fomin, A. Zelevinsky, {\em Cluster algebras { \rm III}: Upper bounds and double Bruhat cells}, Duke Math. J. 126 (2005), 1--52.

\bibitem[BR]{BR} A. Buan, I. Reiten, {\em Acyclic quivers of finite mutation type}, Int. Math. Res. Not. (2006), Art ID 12804, 10 pp.


\bibitem[CK]{K3} P. Caldero, B. Keller, {\em From triangulated categories to cluster algebras {\rm II}},  Ann. Sci. \'Ecole Norm. Sup. (4)  {39} (2006), 983--1009.


\bibitem[DO]{DO} H. Derksen, T. Owen,  {\em New graphs of finite mutation type}, Electron. J. Combin {15} (2008), \#R139, 15pp.

\bibitem[FST]{FST} S. Fomin, M. Shapiro, D. Thurston, {\em Cluster algebras and triangulated surfaces. Part {\rm I}: Cluster complexes}, Acta Math. 201 (2008), 83--146.

\bibitem[FST1]{programs} A. Felikson, M. Shapiro, P. Tumarkin, programs sources accessible at \\ \url{www.math.msu.edu/~mshapiro/FiniteMutation.html}


\bibitem[FZ1]{FZ1} S. Fomin, A. Zelevinsky, {\em Cluster algebras {\rm I}: Foundations}, J. Amer. Math. Soc. 15 (2002), 497-–529.

\bibitem[FZ2]{FZ2} S. Fomin, A. Zelevinsky, {\em Cluster algebras {\rm II}: Finite type classification}, Invent. Math. 154 (2003), 63-–121.

\bibitem[FZ3]{FZ4} S. Fomin, A. Zelevinsky, {\em Cluster algebras {\rm IV}: Coefficients}, Compos. Math. 143 (2007), 112-–164.


\bibitem[K]{Kac}
V.~Kac, {\em Infinite-dimensional Lie algebras}, Cambridge Univ. Press, London, 1985.


\bibitem[K1]{K} B. Keller, Quiver mutation in  {\rm Java}, \url{www.math.jussieu.fr/~keller/quivermutation}

\bibitem[K2]{K2} B. Keller, {\em Cluster algebras, quiver representations and triangulated categories}, arXiv:0807.1960

\bibitem[S1]{S} A. Seven, {\em Recognizing cluster algebras of finite type},    Electron. J. Combin. 14 (2007), \#R3, 35pp.

\bibitem[S2]{S2} A. Seven, {\em Cluster algebras and semipositive symmetrizable matrices }, arXiv:0804.1456v2

\end{thebibliography}
\end{document}